\newtheorem{theorem}{Theorem}
\newtheorem{lemma}{Lemma}
\newtheorem{fact}{Fact}
\newtheorem{definition}{Definition}
\newtheorem{corollary}{Corollary}
\newtheorem{proposition}{Proposition}
\newtheorem{problem}{Problem}
\newtheorem{observation}{Observation}
\newtheorem{remark}{Remark}
\newcommand{\defeq}{:=}
\newcommand{\norm}[1]{\left\lVert#1\right\rVert}
\newcommand{\norms}[1]{\lVert#1\rVert}
\newcommand{\inprod}[2]{\left\langle#1, #2\right\rangle}
\newcommand{\inprods}[2]{\langle#1, #2\rangle}
\newcommand{\eps}{\epsilon}
\newcommand{\lam}{\lambda}
\newcommand{\Lam}{\Lambda}
\newcommand{\argmin}{\textup{argmin}} 
\newcommand{\R}{\mathbb{R}}
\newcommand{\N}{\mathbb{N}}
\newcommand{\half}{\frac{1}{2}}
\newcommand{\1}{\mathbbm{1}}
\newcommand{\0}{\mathbb{0}}
\newcommand{\E}{\mathbb{E}}
\newcommand{\Nor}{\mathcal{N}}
\newcommand{\xset}{\mathcal{X}}
\newcommand{\ma}{\mathbf{A}}
\newcommand{\id}{\mathbf{I}}
\definecolor{burntorange}{rgb}{0.8, 0.33, 0.0}
\newcommand{\kjtian}[1]{\textcolor{burntorange}{\textbf{kjtian:} #1}}
\newcommand{\arun}[1]{\textcolor{red}{\textbf{arun:} #1}}
\newcommand{\tO}{\widetilde{O}}
\newcommand{\Par}[1]{\left(#1\right)}
\newcommand{\Brack}[1]{\left[#1\right]}
\newcommand{\Brace}[1]{\left\{#1\right\}}
\newcommand{\alg}{\mathcal{A}}
\newcommand{\oracle}{\mathcal{O}}
\newcommand{\ball}{\mathbb{B}}
\newcommand{\set}{\mathcal{K}}
\newcommand{\dd}{\textup{d}}
\newcommand{\mb}{\mathbf{B}}
\newcommand{\mm}{\mathbf{M}}
\newcommand{\obo}{\oracle_{\textup{bo}}}
\newcommand{\bx}{\bar{x}}
\newcommand{\xout}{x_{\mathrm{out}}}
\newcommand{\cba}{C_{\textup{ba}}}
\newcommand{\BA}{\mathsf{BallAccel}}
\newcommand{\lams}{\lam_\star}
\newcommand{\xs}{x^\star}
\newcommand{\frl}{f_{\rho, \lam}}
\newcommand{\frlbx}{f_{\rho, \lam, \bar{x}}}
\newcommand{\CN}{\mathsf{ConstrainedNewton}}
\newcommand{\UCS}{\mathsf{UnconstrainedSGD}}
\newcommand{\UCSS}{\mathsf{UnconstrainedSGDConv}}
\newcommand{\codeStyle}[1]{{\bfseries #1} }
\newcommand{\codeInput}{\codeStyle{Input:}}	
\newcommand{\codeReturn}{\codeStyle{Return:}}	
\newcommand{\xslmz}{\xs_{\lam, \mu, z}}
\newcommand{\xslzv}{\xs_{\Lam, z, v}}
\newcommand{\xavg}{x_{\textup{avg}}}
\newcommand{\ms}{\mathbf{S}}
\newcommand{\normop}[1]{\left\lVert#1\right\rVert_{\textup{op}}}
\newcommand{\eadd}{\eps_{\textup{add}}}
\newcommand{\emul}{\eps_{\textup{mul}}}
\newcommand{\mh}{\mathbf{H}}
\newcommand{\tg}{\tilde{g}}
\newcommand{\tmh}{\widetilde{\mh}}
\newcommand{\xsa}{x^\star_\alpha}
\newcommand{\timeSymbol}{\mathcal{T}}
\newcommand{\depthSymbol}{\mathcal{D}}
\newcommand{\tQuery}{\timeSymbol_{\mathrm{query}}}
\newcommand{\dQuery}{\depthSymbol_{\mathrm{query}}}
\newcommand{\poly}{\textup{poly}}
\newcommand{\calT}{\mathcal{T}}
\newcommand{\calD}{\mathcal{D}}
\title{Closing the Computational-Query Depth Gap \\ in Parallel Stochastic Convex Optimization}
\author{	Arun Jambulapati\\
            University of Michigan\\
            \texttt{jmblpati@gmail.com}
	\and
	    Aaron Sidford\\
            Stanford University\\
            \texttt{sidford@stanford.edu}
	\and
            Kevin Tian\\
            University of Texas at Austin\\
            \texttt{kjtian@cs.utexas.edu}
}
\date{}
\begin{document}

\maketitle

\begin{abstract}

We develop a new parallel algorithm for minimizing Lipschitz, convex functions with a stochastic subgradient oracle. The total number of queries made and the query depth, i.e., the number of parallel rounds of queries, match the prior state-of-the-art, \cite{CarmonJJLLST23}, while improving upon the computational depth by a polynomial factor for sufficiently small accuracy. When combined with previous state-of-the-art methods our result closes a gap between the best-known query depth and the best-known computational depth of parallel algorithms.

Our method starts with a \emph{ball acceleration} framework of previous parallel methods, i.e., \cite{CarmonJJJLST20, AsiCJJS21}, which reduce the problem to minimizing a regularized Gaussian convolution of the function constrained to Euclidean balls. By developing and leveraging new stability properties of the Hessian of this induced function, we depart from prior parallel algorithms and reduce these ball-constrained optimization problems to stochastic unconstrained quadratic minimization problems. Although we are unable to prove concentration of the asymmetric matrices that we use to approximate this Hessian, we nevertheless develop an efficient parallel method for solving these quadratics. Interestingly, our algorithms can be improved using fast matrix multiplication and use nearly-linear work if the matrix multiplication exponent is 2.

\end{abstract}

\tableofcontents

\newpage

\section{Introduction}
\label{sec:intro}

Consider the classic problem of \emph{Lipschitz convex optimization}. In this problem, there is a convex $f : \R^d \to \R$ that is $1$-\emph{Lipschitz}, i.e., 
$|f(x) - f(y)| \leq \norm{x - y}$ for all $x,y\in \R^d$, that is guaranteed to have a minimizer $\xs \in \R^d$ with $\norm{\xs} \leq 1$. The goal of the problem is to compute an (expected) \emph{$\epsilon$-approximate minimizer} to $f$, i.e., $x \in \R^d$ with $\E f(x) \leq f(\xs) + \epsilon$ given access to $f$ only though a subgradient oracle $g$ that when queried at $x \in \R^d$ outputs a vector $g(x) \in \partial f(x)$, where $\partial f$ is the set of subgradients of $f$ at $x$. We focus on this standard setting in the introduction for simplicity, however our results extend to the more general case of bounded stochastic gradient oracles and further relaxations of the bounds on Lipschitz continuity and the minimizer (see \Cref{prob:sco}).

Lipschitz convex optimization is foundational in optimization theory, and its study has motivated well-known optimization algorithms. Simple, classic subgradient descent solves the problem with $O(\eps^{-2})$ oracle queries \cite{NemirovskiY83}, and cutting plane methods solve the problem with $O(d \log \eps^{-1})$ oracle queries \cite{KTE88}. Consequently, the query complexity of the problem, i.e., the number of queries needed to solve the problem in the worst case, is $O(\min\{\epsilon^{-2},d \log(\epsilon^{-1})\}$. Furthermore, this bound is known to be optimal among deterministic algorithms for all settings of $\epsilon$ and $d$ \cite{NemirovskiY83}, and is optimal even among randomized and quantum algorithms in certain settings \cite{AgarwalBRW12, GargKNS21}.

Due to the massive growth in dataset sizes and use of parallel computing resources, a line of work has studied \emph{parallel} variants of Lipschitz convex optimization \cite{Nem94, DuchiBM12, BalkanskiS18, BubeckJLLS19, CarmonJJLLST23} and non-Euclidean generalizations \cite{DG19,ChakrabartyGJS23}. Study of this problem dates to at least \cite{Nem94} which proposed the \emph{parallel oracle access model}, in which the algorithm proceeds in $T$ rounds and in round $t \in [T]$, the algorithm queries the oracle with $n_t$ points $x_{t,1},\ldots,x_{t,n_t} \in \R^d$ and receives the output of the oracle on each point.
In round $t$, the $n_t$ queried points can depend only on the queries in the previous rounds and the output of the oracle in those rounds (and additional randomness used by the algorithm). We call such an algorithm \emph{highly parallel} \cite{BubeckJLLS19} if 
the number of queries in each round is bounded by a polynomial in $d$ and a natural condition number for the problem, e.g., $n_t = \mathrm{poly}(d,\epsilon^{-1})$ for all $t \in [T]$. The total number of rounds of the algorithm, $T$, is called the \emph{query depth} of the algorithm, and is a natural measure of its parallel performance.

Perhaps surprisingly, nontrivial parallel speedups, i.e., parallel algorithms whose query depth is better than the best-known query complexity, are only known for certain $\epsilon$ ranges. In fact, the $O(\epsilon^{-2})$ complexity of simple subgradient descent is optimal among highly parallel algorithms for sufficiently large $\epsilon$. The associated lower bound was shown for all $\epsilon \gtrsim d^{-1/6}$ by \cite{Nem94, BalkanskiS18} and for all $\epsilon \gtrsim d^{-1/4}$ by \cite{BubeckJLLS19}.\footnote{We use $\lesssim$, $\gtrsim$, and $\tO$ to hide polylogarithmic factors in $d$ and $\epsilon^{-1}$ in the introduction, and more broadly we use this notation to hide polylogarithmic factors in $d$ and $\frac{LR}{\eps}$ throughout the paper in the context of Problem~\ref{prob:sco}.} Additionally, when $\eps \lesssim d^{-1}$, the current state-of-the-art query depth is achieved by applying classical cutting plane methods, e.g., \cite{Vaidya96}. However, in the regime where $d^{-1/4} \gtrsim \epsilon \gtrsim d^{-1}$, which we term the \emph{intermediate regime} of $\eps$, nontrivial parallel speedups are known and there are algorithms which improve upon both subgradient descent and cutting plane methods. Namely, Lipschitz convex optimization was first shown to be solvable with query depth $\tO(d^{1/4}\eps^{-1})$ by \cite{DuchiBM12}, and then $\tO(d^{1/3} \eps^{-2/3})$ by \cite{BubeckJLLS19}, the current state-of-the-art query depth.

However, beyond query depth, there are other natural ways to parameterize the complexity of a highly parallel algorithm. %
Specifically,  we measure the complexity of parallel algorithms as follows.

\begin{definition}[Parallel complexity]\label{def:parallel_alg}
We define the following four properties of an algorithm solving an optimization problem, e.g., Problem~\ref{prob:sco}, with parallel access to an oracle $g$. 
\begin{enumerate}
    \item \emph{Query depth}: number of sequential rounds of interaction with $g$ (queries submitted in batch).
    \item \emph{Query complexity}: total number of queries to $g$.
    \item \emph{Computational depth}: number of sequential rounds of computation, outside of querying $g$.
    \item \emph{Computational complexity}: amount of computational work performed, outside of querying $g$. 
\end{enumerate}
If $g$
can be implemented with $O(\tQuery)$ work and $O(\dQuery)$ depth, we write that an algorithm can be implemented with $O(a \cdot \dQuery + b)$ depth and $O(c \cdot \tQuery + d)$ work when its query depth is $O(a)$, query complexity is $O(c)$, computational depth is $O(b)$, and computational complexity is $O(d)$.
\end{definition}

Recently, \cite{CarmonJJLLST23} designed an algorithm which matched the $\tO(d^{1/3} \eps^{-2/3})$ query depth of \cite{BubeckJLLS19}, while simultaneously achieving a \emph{query complexity} of $\tO(d^{1/3} \eps^{-2/3} + \eps^{-2})$. This query complexity improved upon the $\tO(d^{4/3}\eps^{-8/3})$ query complexity of  \cite{BubeckJLLS19} and, when $\eps \lesssim d^{-1/4}$, matched that of subgradient descent, which is optimal for $\epsilon \gtrsim d^{-1/2}$ (as discussed earlier).  Unfortunately, the \emph{computational depth} of \cite{CarmonJJLLST23}  is $\tO(d^{1/4}\eps^{-1})$  (matching that of \cite{DuchiBM12}). This computational depth scales polynomially worse than the query depth of \cite{CarmonJJLLST23} for $\epsilon \lesssim d^{-1/4}$, and is larger than the computational depth of state-of-the-art cutting plane methods, e.g., \cite{Vaidya96}, when $\epsilon \lesssim d^{-3/4}$. 

The key question motivating our work is whether this gap between the computational and query depths of state-of-the-art parallel algorithms in the intermediate regime is inherent. Specifically we address an open problem left by \cite{CarmonJJLLST23} as to whether there is an algorithm which, in the intermediate regime of $\eps$, obtains the best-known query depth and query complexity, while simultaneously obtaining a computational depth no worse than its query depth (ideally at low overhead to the algorithm's computational complexity). Closing this gap is a natural problem that would expand the theory for parallel convex and stochastic optimization, and potentially be of broader utility.

\paragraph{Our results.}
Our main result is a new algorithm which closes this gap for Lipschitz convex optimization and, more broadly, for stochastic convex optimization, as stated in \Cref{prob:sco}. The most general form of our result, Theorem~\ref{thm:main_formal}, is stated in Section~\ref{sec:framework}. For simplicity in the introduction, we state the specialization of Theorem~\ref{thm:main_formal} to Lipschitz convex optimization here.

\begin{theorem}\label{thm:main}
If queries to a subgradient oracle are implementable with	$O(\dQuery)$ depth and $O(\tQuery)$ work, then there is a randomized algorithm which solves Lipschitz convex optimization with
\[
\tO(d^{\frac 1 3}\epsilon^{- \frac 2 3} \cdot \dQuery + d^{\frac 1 3}\epsilon^{- \frac 2 3})
\text{ depth and }
\tO\Par{(d^{\frac 1 3} \epsilon^{- \frac 2 3} + \epsilon^{-2})\cdot \tQuery + 
d^{\frac 4 3}\epsilon^{- \frac 2 3} + d^{\frac{5 - \omega} 3}\epsilon^{- \frac {4\omega - 2} 3} 
}
\text{ work,}
\]
where $\omega < 2.372$ \cite{AlmanDWXXZ24} is such that multiplying $d \times d$ matrices requires $O(d^\omega)$ work.
\end{theorem}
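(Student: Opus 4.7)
The plan is to invoke the ball acceleration framework of \cite{CarmonJJJLST20, AsiCJJS21} as the outer loop, which reduces Lipschitz convex optimization to approximately minimizing $\tO(d^{1/3}\eps^{-2/3})$ regularized Gaussian-convolved objectives $\frlbx$, each over a small Euclidean ball about a center $\bar{x}$. This outer loop directly yields the $d^{1/3}\eps^{-2/3}$ factor on the query depth and the matching contribution to the computational depth, along with the corresponding term in the query complexity.

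The main departure from prior work is how each ball-constrained subproblem is solved. Rather than running stochastic subgradient descent on $\frlbx$ (which couples each inner iteration to a fresh parallel round of oracle queries and produces the $d^{1/4}\eps^{-1}\cdot\dQuery$ computational-depth gap of \cite{CarmonJJLLST23}), we exploit the smoothness of the Gaussian convolution. At the start of each ball subproblem I would (i) build a stochastic estimator $\tmh$ for the Hessian of $\frlbx$ near $\bar{x}$ using a single parallel batch of subgradient queries against Gaussian perturbations, (ii) prove a Hessian stability lemma showing that $\tmh$ remains an accurate preconditioner over the entire ball, so it can be reused across all inner iterations of that subproblem, and then (iii) reduce the remaining work to an \emph{unconstrained} stochastic quadratic problem $\min_y \thalf y^\top \tmh y + b^\top y$ driven by fresh stochastic gradients of $\frlbx$, choosing the regularization so that the minimizer lies well inside the ball and no projection step is needed.

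The main obstacle, as emphasized in the abstract, is that our natural $\tmh$ is asymmetric and we cannot prove standard matrix concentration for it. The plan is to design an inner solver that accesses $\tmh$ only through matrix-vector products and whose analysis depends only on scalar bilinear forms $u^\top(\tmh - \mh)v$ against the true symmetric Hessian $\mh$, which we can control by a scalar Bernstein-style argument, rather than by an operator-norm concentration inequality. A stochastic accelerated iteration on the quadratic that uses $\tmh$ as a linear preconditioner would then converge, amortized across all subproblems, in $\tO(\eps^{-2})$ total stochastic subgradient queries with constant computational depth per inner step, matching the query complexity claim and ensuring the overall computational depth tracks the query depth.

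Finally, I would account for the linear algebra. Each inner iteration applies $\tmh$ (maintained implicitly as a sum of rank-one outer products) to a vector, contributing the $d^{4/3}\eps^{-2/3}$ work term across all outer rounds. The $d^{(5-\omega)/3}\eps^{-(4\omega-2)/3}$ term arises by batching multiple inner updates together and using fast matrix multiplication \cite{AlmanDWXXZ24} to realize blocks of $\tmh$-vector products simultaneously, trading a dense $d$ factor for $d^{\omega-2}$ at the cost of a larger $\eps$ exponent; at $\omega = 2$ the overall work becomes nearly linear in the ambient dimension, as highlighted in the abstract. Combining these three ingredients with the outer ball acceleration guarantee yields the stated depth and work bounds, specializing the general \Cref{thm:main_formal} to the $1$-Lipschitz, $\norms{\xs}\le 1$ setting.
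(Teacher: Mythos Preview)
Your outer framework is right: ball acceleration on the Gaussian convolution, with the $d^{1/3}\eps^{-2/3}$ outer iteration count. But the inner solver you sketch differs from the paper's in a way that is a genuine gap, not just a stylistic difference.

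You propose to draw one batch of gradient queries at the start of each ball subproblem, form a single estimator $\tmh$, and then reuse $\tmh$ as a fixed preconditioner for an accelerated inner method, controlling only scalar quantities $u^\top(\tmh-\mh)v$ via a Bernstein-type argument. This is exactly the obstacle the paper identifies and does \emph{not} overcome by concentration. The natural rank-one estimator $\mm_\xi = \tfrac{1}{\rho^2}\nabla f(x+\xi)\xi^\top$ has $\normop{\E[\mm_\xi\mm_\xi^\top]}$ of order $L^2 d/\rho^2$, so matrix Bernstein needs $\Omega(d)$ samples for a useful operator-norm bound (Section~\ref{sec:intro:hess_opt}). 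Your scalar Bernstein workaround would only control $u^\top(\tmh-\mh)v$ for \emph{fixed} $u,v$; in any iterative preconditioned method the directions $u,v$ that matter depend adaptively on $\tmh$, so you would need a union bound over a net in $\R^d$, which lands you back at $\Omega(d)$ samples.

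The paper's route is different in two respects. First, the Hessian stability result (Lemma~\ref{lem:conv_stable}, Corollary~\ref{cor:conv_reg_stable}) is about the \emph{true} Hessian $\nabla^2 f_\rho$, not an estimate: it says $\nabla^2\frlbx$ is multiplicatively stable over the ball, which licenses a constrained Newton method (Lemma~\ref{lem:quad_newton}) with $O(\log)$ steps. Second, each Newton step is then a constrained quadratic in the unknown Hessian, and this quadratic is solved by SGD (Algorithm~\ref{alg:unconstrained_sgd}) that draws a \emph{fresh} rank-one $\mm_\xi$ at every inner iteration and uses $\tg + 2\mm_\xi(x-z)$ as a stochastic gradient. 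Crucially, SGD's rate depends only on the second moment of this gradient estimator, which involves $\E[\mm_\xi^\top\mm_\xi]$ and stays $O(L^2/\rho^2)$ (Lemma~\ref{lem:second_moment}); no concentration of $\tmh$ is ever invoked. The constraint is handled not by ``choosing the regularization so the minimizer lies inside the ball'' but by a binary search over a Lagrange multiplier (Proposition~\ref{prop:binsearch}).

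Finally, the fast-matrix-multiplication term does not come from batching $\tmh$-vector products. Because each SGD step is a rank-one affine update $x_{t+1}=c_t(\id_d - u_t v_t^\top)x_t + w_t$, the entire SGD trajectory is an instance of Problem~\ref{prob:rankone_maintain_all}, and Proposition~\ref{prop:prob3solve} solves this recurrence in $O(\log^2 T\log d)$ depth and $O(dT^{\omega-1})$ work via divide-and-conquer on products of low-rank perturbations of the identity. That is the source of the $d^{(5-\omega)/3}\eps^{-(4\omega-2)/3}$ term.
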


Theorem~\ref{thm:main} matches the query depth and query complexity of the state-of-the-art algorithm  \cite{CarmonJJLLST23} (in terms of query depth) in the intermediate regime and attains a computational depth matching its query depth up to logarithmic factors (see \Cref{tab:sco} for a more complete comparison to prior work). Interestingly, the method uses fast matrix multiplication, a technique used to obtain state-of-the-art work and depth complexities for linear system solving; however, if $\omega = 2$, assuming vector operations using a stochastic gradient oracle require $\Omega(d)$ work, then its computational complexity is no worse than the work to query the oracle. Moreover, if $\tQuery$ is moderately larger than $d$ (e.g., $\tQuery = \Omega(d \cdot \eps^{-\half})$ for the current value of $\omega \neq 2$), the overhead of $d^{\frac{5 - \omega} 3}\epsilon^{- \frac {4\omega - 2} 3} $ is a low-order term compared to $\eps^{-2}\cdot \tQuery$. In the more general Corollary~\ref{cor:tradeoff_main} later in the paper, we show that it is possible obtain different tradeoffs between computational complexity and computational depth.

Beyond these quantitative improvements to parallel Lipschitz and stochastic convex optimization, to obtain our results, we provide several insights on related tools of potential independent interest (all outlined in \Cref{sec:overview}). First, we provide a structural result (Lemma~\ref{lem:conv_stable}) about Gaussian convolutions of convex functions, a central tool in stochastic optimization. When combined with prior parallel optimization machinery, Lemma~\ref{lem:conv_stable} reduces our problem to solving certain structured stochastic quadratic optimization problems in parallel. We then provide a new parallel algorithm for solving these quadratic optimization problems, which circumvents the need for concentration bounds. Along the way, we provide tools for boosting expected optimality bounds into high probability and handling hard constraints. We hope these tools may find broader use in optimization and learning theory and facilitate reaping the rewards of parallelism while mitigating the computational costs.

\begin{table}
    \centering
    \renewcommand{\arraystretch}{1.75}
    \begin{tabular}{{c}{c}{c}{c}}
    \toprule
      Method   &  Query depth & Query complexity & Computational depth \\
      \midrule
       SGD~\cite{NemirovskiY83}  & $\epsilon^{-2}$ & $\epsilon^{-2}$ & $\epsilon^{-2}$\\
       \cite{DuchiBM12} & $d^{\frac{1}{4}}\epsilon^{-1}$ & $d^{\frac{1}{4}}\epsilon^{-1} + \epsilon^{-2}$ & $d^{\frac{1}{4}}\epsilon^{-1}$\\
       \cite{BubeckJLLS19} & $d^{\frac{1}{3}}\epsilon^{-\frac{2}{3}}$ & $d^{\frac{4}{3}}\epsilon^{-\frac{8}{3}}$ & $d^{\frac{4}{3}}\epsilon^{-\frac{8}{3}}$ \\
       \cite{CarmonJJLLST23} & $d^{\frac 1 3}\epsilon^{-\frac 2 3}$ & $d^{\frac 1 3}\epsilon^{-\frac 2 3} + \epsilon^{-2}$ & $d^{\frac 1 3}\epsilon^{-\frac 2 3} + d^{\frac 1 4}\epsilon^{-1}$\\
       CPM*~\cite{Vaidya96} & $d$ & $d$ & $d$ \\
       \midrule
       Theorem~\ref{thm:main} & $d^{\frac{1}{3}}\epsilon^{-\frac{2}{3}}$ & $d^{\frac{1}{3}}\epsilon^{-\frac{2}{3}}+\epsilon^{-2}$ & $d^{\frac{1}{3}}\epsilon^{-\frac{2}{3}}$ \\
       \bottomrule
    \end{tabular}
    \caption{\textbf{Highly parallel Lipschitz convex optimization algorithms.} The table depicts the history of improvements for solving Lipschitz convex optimization algorithms hiding polylogarithmic factors in $d$ and $\eps^{-1}$. CPM* refers to ``cutting plane methods'' and to the best of the authors knowledge \cite{Vaidya96} is the first paper to achieve the state-of-the-art complexity stated in the table; there are additional CPM results discussed in \Cref{rem:cutting_plane}. The table applies to Problem~\ref{prob:sco} if each occurrence of $\epsilon^{-1}$ is replaced with $\kappa \defeq \tfrac{LR}{\eps}$ and the CPM* line is changed (again, see \Cref{rem:cutting_plane}).
    }
    \label{tab:sco}
\end{table}

\begin{remark}[Parallel complexity of cutting plane methods]
\label{rem:cutting_plane}	
Cutting plane methods have a longer history than that conveyed in the CPM* line of \Cref{tab:sco}. \cite{levin1965algorithm,newman1965location} showed that it is possible to obtain query complexity $\tO(d)$ and since then a line of work has established different tradeoffs between query complexity and computational complexity \cite{shor1977cut,yudin1976informational,khachiyan1980polynomial,khachiyan1988method,nesterov1989self,Vaidya96,bertsimas2004solving,lee2015faster,jiang2020improved}. Though computational depth was not necessarily highlighted in these works, we believe the first method with $\tO(d)$ query complexity that could be implemented in depth $\tO(d)$ is due to \cite{Vaidya96}; subsequent papers may or may not have the same property. For stochastic convex optimization (\Cref{prob:sco}), though not explicitly stated, we believe the state-of-the-art is to leverage \cite{Vaidya96} within the framework of \cite{SidfordZ23} to obtain an algorithm query and computational depth $\tO(d)$ and query complexity $\tO(d \cdot \poly(\kappa))$ for $\kappa \defeq \tfrac{LR}{\eps}$.
\end{remark}

\paragraph{Paper organization.}
We assemble the pieces to prove Theorem~\ref{thm:main} and its generalization, Theorem~\ref{thm:main_formal}, throughout the rest of the paper. In Section~\ref{sec:overview}, we overview our approach. We first review facts about Gaussian convolutions and a ball acceleration result from \cite{CarmonJJLLST23}, which constitutes our main algorithm framework. Additionally, we prove a result about the stability of Hessians of Gaussian convolutions of convex functions, which is the main structural insight enabling our new algorithms. In Section~\ref{sec:quadratic}, we give an efficient parallel algorithm for optimizing (suitably regularized) local quadratic approximations to a Gaussian convolution. Finally, in Section~\ref{sec:framework}, we show how to obtain our results by using this quadratic solver to implement the constrained ball oracles required by the acceleration framework by performing a binary search over our subroutine in Section~\ref{sec:quadratic}.

\paragraph{General notation.} For $d \in \N$, we let $\0_d$ and $\1_d$ denote the all-zeroes and all-ones vectors in $\R^d$, $\id_d$ denote the identity matrix in $\R^{d \times d}$, and $[d] \defeq \{i \in \N \mid 1 \le i \le d\}$.
 We let $\norm{\cdot}$ denote the Euclidean norm of a vector. For $x \in \R^d$ we let $\ball_x(r) \defeq \{x' \in \R^d \mid \norm{x' - x} \le r\}$ and $\ball(r) \defeq \ball_{\0_d}(r)$ when $d$ is clear from context. We use $\preceq$ to denote the Loewner partial ordering over $d \times d$ symmetric matrices, i.e., $\ma \preceq \mb$ if and only if $x^\top \ma x \leq x^\top \mb x$ for all $x$, and define $\succeq$ analogously. For a positive semidefinite (PSD) $\ma \in \R^{d \times d}$, we let $\norm{v}_{\ma} \defeq (v^\top \ma v)^{1/2}$ be the induced seminorm.

\paragraph{Parallel computation model.} We assume a parallel computation model where all vector operations in $\R^d$ (e.g., addition and scalar multiplication) require $O(d)$ work and $O(1)$ depth, and that all matrix-vector multiplications (including computing dot products) require $O(\log d)$ depth. We let $\omega < 2.372$ \cite{AlmanDWXXZ24} be defined such that two $d \times d$ matrices can be multiplied with work $O(d^\omega)$. By a known reduction (\cite{Pan87}; see also discussion in \cite{PanR85}), under this definition of $\omega$, matrix multiplication can be performed in work $O(d^\omega)$ and depth $O(\log d)$. Under different parallel models, some of these  bounds may incur polylogarithmic factor overheads.

%

%
\section{Technical overview}\label{sec:overview}

In this remainder of the paper we consider the following stochastic convex optimization problem.

\begin{problem}[Stochastic convex optimization]\label{prob:sco}
	In the \emph{stochastic convex optimization} problem we are given $\epsilon, L, R > 0$ and access to a  \emph{stochastic gradient oracle} $g: \R^d \to \R^d$ 
	satisfying, for all $x \in \R^d$, $\E g(x) \in \partial f(x)$ and $\E \|g(x)\|^2 \le L^2$ for convex $f: \R^d \to \R$. The goal is to output an \emph{expected $\eps$-approximate minimizer of $f$ over $\ball(R)$}, i.e., $\xout \in \R^d$ such that
	$\E f(\xout) \leq \min_{x \in \ball(R)} f(x) + \eps$. We assume $g$ can be implemented with $O(\tQuery)$ work and $O(\dQuery)$ depth.
\end{problem}

Note that stochastic convex optimization (\Cref{prob:sco}) generalizes the Lipschitz convex optimization problem defined in Section~\ref{sec:intro}. By Jensen's inequality and the convexity of $\norm{\cdot}^2$, $\E \|g(x)\|^2 \le L^2$ implies that $\norm{\E g(x)} \le L$ and consequently in \Cref{prob:sco} at every point $x$ there is a subgradient of norm at most $L$. This implies that $f$ is $L$-Lipschitz and thus Lipschitz convex optimization is the special case of \Cref{prob:sco} when $L = R = 1$, each output of the stochastic subgradient oracle is deterministic, and $f$ has a minimizer $\xs \in \R^d$ with $\norm{\xs} \leq 1$. 

Our main result is the following efficient parallel algorithm for solving \Cref{prob:sco}. This theorem immediately implies Theorem~\ref{thm:main} in the special case of Lipschitz convex optimization. 

\begin{restatable}{theorem}{restatemainformal}\label{thm:main_formal}
	There is an algorithm ($\BA$ in Proposition~\ref{prop:ball_accel}, using Proposition~\ref{prop:parallel_ball_oracle} as a ball optimization oracle) which solves Problem~\ref{prob:sco} using:
	\begin{gather*}
		O\Par{d^{\frac 1 3}\kappa^{\frac 2 3}\log^{\frac{13}{3}}\Par{d\kappa}\log\log\Par{d\kappa} \cdot \dQuery + d^{\frac 1 3}\kappa^{\frac 2 3}\log^{\frac{28}{3}}(d\kappa)}\text{ depth,}\\
		\text{and } O\Par{\Par{d^{\frac 1 3}\kappa^{\frac 2 3}\log^{\frac{10}3}(d\kappa) + \kappa^2\log^{\frac{19}{3}}(d\kappa)}\cdot \tQuery + d^{\frac 4 3}\kappa^{\frac 2 3}\log^{\frac {10} 3}(d\kappa)+ d^{\frac{5-\omega}{3}}\kappa^{\frac{4\omega - 2}{3}}\log^{\frac{19}{3}}(d\kappa)}  \text{ work,}
	\end{gather*}
	where $\omega < 2.372$ \cite{AlmanDWXXZ24} is the matrix multiplication exponent, and $\kappa \defeq \frac{LR}{\eps}$.
\end{restatable}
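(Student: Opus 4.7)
The plan is to instantiate the ball acceleration framework of Proposition~\ref{prop:ball_accel} ($\BA$) with the new ball optimization oracle from Proposition~\ref{prop:parallel_ball_oracle} and read off the resulting complexities. The framework reduces Problem~\ref{prob:sco} to $\tO(d^{1/3}\kappa^{2/3})$ sequential calls to a ball oracle that (approximately) minimizes a suitably regularized Gaussian convolution $\frl$ restricted to a Euclidean ball of small radius. Since these calls are sequential, query/computational depths multiply by $\tO(d^{1/3}\kappa^{2/3})$, while query/computational work add across calls; any ``outer'' cost of $\BA$ beyond invoking its oracle contributes only a lower-order term (it is essentially a variance-reduced averaging over subgradient queries, contributing the $\kappa^2 \cdot \tQuery$ query-work term and no nontrivial matrix operations).

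Next I would implement the ball oracle. The insight from Lemma~\ref{lem:conv_stable} is that the Hessian of $\frl$ is Loewner-stable over the small ball around the current center $\bx$, so one can approximate $\frl$ by a local quadratic whose Hessian is a (randomized, possibly asymmetric) estimator of $\nabla^2 \frl(\bx)$, as built from calls to the stochastic gradient oracle. Section~\ref{sec:quadratic} provides a parallel solver for the resulting regularized unconstrained stochastic quadratic minimization problem. To turn this unconstrained solver into a ball-constrained one, I would do a binary search on a Lagrangian regularizer $\lam$: for each trial $\lam$, call the Section~\ref{sec:quadratic} solver on $\frlbx$ and check whether the returned minimizer lies in the required ball (using a high-probability guarantee obtained by standard boosting). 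Standard duality for proximal-type problems makes the search interval polynomially bounded, so $\tO(1)$ binary search steps suffice. The binary search, the Hessian-estimator construction, and matrix-vector products with the (batched) estimator are what introduce the extra logarithmic factors and the fast-matrix-multiplication term $d^{(5-\omega)/3}\kappa^{(4\omega-2)/3}$: each ball-oracle call must prepare an approximation to the Hessian action, and the dominant cost of this preparation is matrix multiplication at the scale dictated by the number of samples needed by Section~\ref{sec:quadratic}.

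Finally I would combine the bounds. Let the ball oracle have per-call depth $\tO(\dQuery + 1)$ times a logarithmic binary-search overhead, and per-call work $\tO(\tQuery) + \tO(d) + \tO(d^{(5-\omega)/3}\kappa^{(4\omega-2)/3-2/3})$ (so that after multiplication by the $\tO(d^{1/3}\kappa^{2/3})$ outer loop count the stated totals emerge); the $d^{4/3}\kappa^{2/3}$ term arises as $d$ times the outer loop count from the vector arithmetic inside $\BA$, and the $\kappa^2\cdot \tQuery$ term from the $\BA$-level SGD component. Multiplying through and collecting the polylogarithmic overheads in $d\kappa$ yields exactly the depth and work stated in the theorem. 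The main obstacle, and the place where the proof is not merely bookkeeping, is certifying the ball-oracle call itself: the Hessian estimator is asymmetric and we cannot rely on matrix concentration to control it, so the quadratic solver must succeed using only the second-moment control enabled by Lemma~\ref{lem:conv_stable}, and its output must be boosted from expected to high-probability accuracy so that the binary search over $\lam$ and the sequential composition across $\tO(d^{1/3}\kappa^{2/3})$ outer rounds both go through by a union bound without losing more than polylogarithmic factors.
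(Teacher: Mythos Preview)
Your high-level plan---plug Proposition~\ref{prop:parallel_ball_oracle} into Proposition~\ref{prop:ball_accel}, verify the preconditions, and sum the per-call complexities---is exactly what the paper does. However, your accounting of where the individual terms come from is off in ways that would not actually produce the stated bound.

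The most significant error is your claim that the ball oracle has per-call query work $\tO(\tQuery)$ and that the $\kappa^2\cdot\tQuery$ term comes from an ``SGD component'' or ``variance-reduced averaging'' inside $\BA$ itself. There is no such outer SGD component. The $\kappa^2\cdot\tQuery$ term comes entirely from the ball oracle calls: Proposition~\ref{prop:parallel_ball_oracle} uses $\tO(L^2/(\lam\phi))$ queries per call, and with $\phi\approx\lams r^2$ and the paper's choices $\lams\approx \eps d^{2/3}\kappa^{4/3}/R^2$, $r\approx\rho=\eps/(2L\sqrt d)$, this is $\tO(\kappa^{4/3}d^{-1/3})$ queries per call, not $\tO(1)$. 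Summed over the $K\approx d^{1/3}\kappa^{2/3}$ calls this gives $\kappa^2$. The same mechanism, with the $(L^2/(\lam^2 r^2))^{\omega-1}$ factor in the work bound of Proposition~\ref{prop:parallel_ball_oracle}, is what produces the $d^{(5-\omega)/3}\kappa^{(4\omega-2)/3}$ term; it is not a separate Hessian-preparation cost.

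Two smaller points: the binary search is not on $\lam$ (that is fixed by $\BA$ per call) but on an auxiliary multiplier $\alpha\ge 2\lam$ enforcing the ball constraint on each \emph{Newton subproblem}; and you omit the Newton layer entirely---the oracle runs $O(\log)$ Newton steps (Lemma~\ref{lem:quad_newton}), each of which is a constrained quadratic that is then unconstrained via the binary search. Finally, the paper's proof must separately handle the second summation in Proposition~\ref{prop:ball_accel} (the higher-accuracy calls with $\phi$ shrunk by $2^j$), and those are what dominate the log exponents; your sketch does not account for them.
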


In the remainder of this technical overview we cover the main steps in proving Theorem~\ref{thm:main_formal}, and discuss key insights and tools developed along the way, of possible broader utility. First, in \Cref{ssec:framework} we provide an overview of the general framework used by both our parallel algorithm and prior work. In \Cref{sec:overview:hessian-stable} we then discuss the key structural insight about this framework that we make and leverage to depart and improve upon prior work. In \Cref{sec:intro:hess_opt} we then discuss our main subroutine that we develop to leverage this structural insight and then in \Cref{sec:overview:parallel-rank-1} we discuss implementing the subroutine in low computational depth to obtain our result. 

\subsection{Framework: convolutions and acceleration}
\label{ssec:framework}

All prior parallel improvements over subgradient descent for \Cref{prob:sco} in the intermediate regime follow a similar broad framework \cite{DuchiBM12, BubeckJLLS19, CarmonJJLLST23}. Each of these works considers a process for smoothing $f$, i.e., working with a smooth approximation, and each uses accelerated optimization methods, i.e., some form of momentum, for optimizing the smoothing of $f$. Where the methods differ is in what smoothing is used, what accelerated method is applied, and how the accelerated method is implemented. Our method follows the approach of  \cite{CarmonJJLLST23} which applies ball acceleration frameworks to optimize the Gaussian convolution of $f$. We begin by reviewing these techniques and highlighting their implication for parallel stochastic convex optimization.

\paragraph{Gaussian convolution.}

To solve \Cref{prob:sco} rather than directly optimizing $f$, following the approach of  \cite{DuchiBM12, BubeckJLLS19, CarmonJJLLST23},
we instead apply methods that optimize the Gaussian convolution of $f$, i.e., the function resulting from convolving $f$ with a Gaussian, as defined below.

\begin{definition}[Gaussian convolution]\label{def:gconv}
	For $f: \R^d \to \R$ and $\rho \ge 0$, we let $f_\rho$ denote the convolution of $f$ with $\Nor(\0_d, \rho^2 \id_d)$, the normal distribution on $\R^d$ with covariance $\rho^2 \id_d$ and mean $\0_d$. We use $\ast$ to denote convolution and $\gamma_p$ to denote the density function of $\Nor(\0_d, \rho^2 \id_d)$, so that $f_\rho = f \ast \gamma_\rho$ and 
	\[
	f_\rho(x) \defeq \E_{\xi \sim \Nor(\0_d, \rho^2 \id_d)}\Brack{f(x - \xi)} = \int_{\R^d} f(x - \xi) \gamma_\rho(\xi) \dd \xi \text{ for all } x \in \R^d\,.
	\]
\end{definition}

Working with $f_\rho$ offers a number of advantages: it is smooth, twice differentiable, and stochastic approximations to its gradient and Hessian can be computed by querying the stochastic gradient oracle at appropriately chosen random points. Additionally, it satisfies a new structural property we develop in Section~\ref{sec:overview:hessian-stable}. Formally, we recall the following facts from prior work \cite{DuchiBM12, BubeckJLLS19}, where by the Alexandrov theorem, the first and second derivatives are almost-everywhere defined in the third item. The fourth item in Fact~\ref{fact:gconv} is shown in the proof of Lemma 8 in \cite{BubeckJLLS19}.

\begin{fact}[Lemma 8, \cite{BubeckJLLS19}]\label{fact:gconv}
	For all convex, $L$-Lipschitz $f: \R^d \to \R$, $\rho \ge 0$, and $x \in \R^d$:
	\begin{enumerate}
		\item $f_\rho$ is convex, $L$-Lipschitz, twice-differentiable, and satisfies $\nabla^2 f_\rho(x) \preceq \frac{L}{\rho} \id_d$,
		\item $f(x) \le f_\rho(x) \le f(x) + L \rho \sqrt{d}$,
		\item $\nabla f_\rho(x) = \int_{\R^d} \nabla f(x - \xi) \gamma_\rho(\xi)\dd \xi$, and
		\item $\nabla^2 f_\rho(x) = \int_{\R^d} \nabla^2 f(x - \xi) \gamma_\rho(\xi) \dd\xi = \frac 1 {\rho^2} \int_{\R^d} \nabla f(x + \xi)\xi^\top \gamma_\rho(\xi) \dd \xi$.
	\end{enumerate}
\end{fact}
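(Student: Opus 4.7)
The plan is to verify each of the four items using smoothness of the Gaussian density $\gamma_\rho$, a Stein-type identity, and the Lipschitz and convexity assumptions on $f$. I would address the derivative formulas in items 3 and 4 first, since items 1 and 2 depend on them.

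For item 3, since $f(x-\xi)$ is $L$-Lipschitz in $x$ uniformly in $\xi$, dominated convergence justifies differentiating under the integral in $f_\rho(x) = \int f(x-\xi)\gamma_\rho(\xi)\,d\xi$, yielding $\nabla f_\rho(x) = \int \nabla f(x-\xi)\gamma_\rho(\xi)\,d\xi$ (Rademacher's theorem ensures $\nabla f$ is a.e.\ defined with $\|\nabla f\|\le L$). For item 4, I would substitute $y = x-\xi$ to get $\nabla f_\rho(x) = \int \nabla f(y)\gamma_\rho(x-y)\,dy$ and differentiate again in $x$ using the Stein identity $\nabla_x \gamma_\rho(x-y) = -\rho^{-2}(x-y)\gamma_\rho(x-y)$; changing variables back and using $\gamma_\rho(-\xi)=\gamma_\rho(\xi)$ yields the second equality. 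The first equality of item 4 (with $\nabla^2 f$ defined a.e.\ by Alexandrov's theorem) then follows by viewing both sides as the convolution of $\gamma_\rho$ with the distributional Hessian of $f$, whose absolutely continuous part equals $\nabla^2 f$ a.e.

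Item 1 follows quickly from items 3 and 4. Convexity of $f_\rho$ is immediate from convexity of each $x\mapsto f(x-\xi)$ (which also gives $\nabla^2 f_\rho \succeq 0$); Lipschitzness comes from $|f_\rho(x)-f_\rho(y)| \le \E|f(x-\xi)-f(y-\xi)| \le L\|x-y\|$; and twice-differentiability follows from the smoothness of $\gamma_\rho$ used in the Stein form. For the operator-norm upper bound, applying item 4 and Cauchy-Schwarz yields, for any unit $v$,
\[
v^\top\nabla^2 f_\rho(x)v \;=\; \rho^{-2}\,\E\Par{(v^\top \nabla f(x+\xi))(v^\top\xi)} \;\le\; \rho^{-2}\sqrt{\E(v^\top\nabla f)^2}\sqrt{\E(v^\top\xi)^2} \;\le\; \frac{L}{\rho},
\]
since $\|\nabla f\|\le L$ and $v^\top\xi \sim \Nor(0,\rho^2)$. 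Item 2 is then immediate: Jensen's inequality gives $f_\rho(x)\ge f(\E[x-\xi])=f(x)$, and Lipschitzness with $\E\|\xi\| \le (\E\|\xi\|^2)^{1/2} = \rho\sqrt{d}$ gives the upper bound.

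The main subtlety is justifying the second-order interchange of differentiation and integration, since $f$ may fail to be twice differentiable on a Lebesgue null set and its distributional Hessian may have a singular part. I would sidestep this by never differentiating $f$ twice under the integral directly: derive the Stein form (the second equality of item 4) first, where only the $C^\infty$ factor $\gamma_\rho$ is differentiated, and treat the $\nabla^2 f$-form as its distributional reformulation. Everything else in the fact is then a clean consequence.
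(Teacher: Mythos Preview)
The paper does not give its own proof of this statement; it is quoted as Lemma~8 of \cite{BubeckJLLS19} and recorded as a Fact. Your sketch is essentially correct and covers everything the paper actually uses.

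There is one genuine subtlety in your handling of the first equality of item~4. If $\nabla^2 f$ denotes the a.e.-defined Alexandrov Hessian, then $\int \nabla^2 f(x-\xi)\gamma_\rho(\xi)\,d\xi$ is the convolution of $\gamma_\rho$ with only the \emph{absolutely continuous part} of the distributional Hessian of $f$, whereas $\nabla^2 f_\rho$ equals the convolution with the \emph{full} distributional Hessian. These differ whenever the singular part is nonzero: already in one dimension, $f(x)=|x|$ has Alexandrov $f''=0$ a.e., yet $(f*\gamma_\rho)''(x)=2\gamma_\rho(x)$. So your step ``view both sides as the convolution of $\gamma_\rho$ with the distributional Hessian of $f$'' does not establish the equality under the pointwise Alexandrov interpretation you specify. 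This gap is arguably present in the Fact as stated; fortunately, the only place the paper invokes that particular equality (the proof of Lemma~\ref{lem:conv_stable}) applies it with $f$ replaced by the $C^\infty$ function $f_{\rho/\sqrt{2}}$, where the distributional Hessian has no singular part and your argument goes through cleanly. The Stein-form second equality of item~4, which is what item~1's smoothness bound relies on, is established by your argument without any such caveat.
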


In light of Fact~\ref{fact:gconv}, we make the following observation.

\begin{observation}\label{obs:conv_suffices}
	In the context of Problem~\ref{prob:sco}, let $\rho \defeq \frac{\eps}{2L\sqrt{d}}$. If a point $\xout$ solves an instance of Problem~\ref{prob:sco} with $f \gets f_\rho$ and $\eps \gets \frac \eps 2$, then $\xout$ also solves Problem~\ref{prob:sco} with $f\gets f$ and $\eps \gets \eps$.
\end{observation}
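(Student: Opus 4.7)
The plan is to obtain the claim directly from the sandwich inequality in the second item of \Cref{fact:gconv}, namely $f(x) \le f_\rho(x) \le f(x) + L\rho\sqrt{d}$ for all $x \in \R^d$. With the specific choice $\rho = \frac{\eps}{2L\sqrt{d}}$, the additive gap satisfies $L\rho\sqrt{d} = \frac{\eps}{2}$, so this becomes the uniform pointwise bound
\[
f(x) \;\le\; f_\rho(x) \;\le\; f(x) + \tfrac{\eps}{2} \quad \text{for all } x \in \R^d.
\]

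Next, I would apply this sandwich on both the point $\xout$ and on any minimizer. Let $\xs \in \argmin_{x \in \ball(R)} f(x)$. The right-hand inequality of the sandwich, applied at $\xs$, yields
\[
\min_{x \in \ball(R)} f_\rho(x) \;\le\; f_\rho(\xs) \;\le\; f(\xs) + \tfrac{\eps}{2} \;=\; \min_{x \in \ball(R)} f(x) + \tfrac{\eps}{2}.
\]
Meanwhile, the left-hand inequality applied pointwise at $\xout$ and then taking expectations gives $\E f(\xout) \le \E f_\rho(\xout)$. By hypothesis, $\xout$ solves \Cref{prob:sco} with $f \gets f_\rho$ and $\eps \gets \tfrac{\eps}{2}$, so
\[
\E f_\rho(\xout) \;\le\; \min_{x \in \ball(R)} f_\rho(x) + \tfrac{\eps}{2}.
\]
Chaining the three displayed inequalities gives $\E f(\xout) \le \min_{x \in \ball(R)} f(x) + \eps$, which is exactly the required guarantee for the original problem.

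There is no real obstacle; the only care needed is to use the two halves of the Gaussian-convolution sandwich in the right directions (the lower bound $f \le f_\rho$ to upper-bound the objective at $\xout$, and the upper bound $f_\rho \le f + \tfrac{\eps}{2}$ to relate the two minimum values). I would also briefly remark in the proof that the hypothesis implicitly uses that $f_\rho$ inherits the data needed for \Cref{prob:sco} with the same parameters $L, R$: by \Cref{fact:gconv}(1), $f_\rho$ is convex and $L$-Lipschitz, and the constraint set $\ball(R)$ is unchanged, so the ``instance of \Cref{prob:sco} with $f \gets f_\rho$'' is well-defined with identical Lipschitz constant $L$ and radius $R$.
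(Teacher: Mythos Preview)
Your proof is correct and follows essentially the same approach as the paper: both arguments invoke the first item of \Cref{fact:gconv} to justify that the instance with $f_\rho$ is well-posed, and then chain the two halves of the sandwich inequality from the second item of \Cref{fact:gconv} (applied at $\xout$ and at a minimizer $x^\star$ of $f$ over $\ball(R)$, respectively) together with the $\tfrac{\eps}{2}$-optimality hypothesis on $f_\rho$. The paper's version is slightly terser, writing $\E f(\xout) \le \E f_\rho(\xout) \le f_\rho(x^\star) + \tfrac{\eps}{2} \le f(x^\star) + \eps$ directly without passing through $\min_{x\in\ball(R)} f_\rho(x)$, but this is a cosmetic difference.
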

\begin{proof}
	By the first item in Fact~\ref{fact:gconv}, it is valid to let $f$ be $f_\rho$ in an instance of Problem~\ref{prob:sco}. By the second item in Fact~\ref{fact:gconv}, letting $x^\star$ achieve $\min_{x \in \ball(R)} f(x)$,
	\[\E f(\xout) \le \E f_\rho(\xout) \le f_\rho(x^\star) + \frac \eps 2 \le f(x^\star) + \eps.\]
\end{proof}
In the rest of the paper, we consider a fixed instance of Problem~\ref{prob:sco}; our approach is to optimize $f$ by instead designing an algorithm for optimizing its Gaussian convolution $f_\rho$. In light of Observation~\ref{obs:conv_suffices}, unless specified otherwise, we fix $\rho \defeq \frac{\eps}{2L\sqrt d}$ throughout the rest of the paper.

\paragraph{Ball acceleration.}
To optimize the Gaussian convolution $f_\rho$, we leverage recent advances in accelerated proximal point algorithms, specifically a recent framework termed \emph{ball acceleration} \cite{CarmonJJJLST20, AsiCJJS21}. Similar strategies were employed by \cite{BubeckJLLS19} and \cite{CarmonJJLLST23}, which reduce solving Problem~\ref{prob:sco} to a smaller number of carefully-designed subproblems; in the case of \cite{CarmonJJLLST23}, the subproblem is to minimize a regularized approximation to $f_\rho$ over a small Euclidean ball (a \emph{ball optimization oracle}). We specifically use the following variant of ball acceleration from \cite{CarmonJJLLST23}.

\begin{definition}[Ball optimization oracle]\label{def:boo}
We say $\obo$ is a \emph{$(\phi, \lam, r)$-ball optimization oracle} for $F: \R^d \to \R$ if given $\bx \in \R^d$, $\obo$ returns $x \in \ball_{\bx}(r)$ with 
\[\E \Brack{F(x) + \frac \lam 2 \norm{x - \bx}^2} \le \min_{x' \in \ball_{\bx}(r)} \Brace{F(x) + \frac \lam 2 \norm{x' - \bx}^2} + \phi.\]
\end{definition}

\begin{proposition}[Proposition 2, \cite{CarmonJJLLST23}]\label{prop:ball_accel}
Let $F: \R^d \to \R$ be $L$-Lipschitz and convex, let $R > 0$, and let $x^\star \in \ball(R)$. There is an algorithm $\BA$ which takes parameters $r \in (0, R]$ and $\eps \in (0, LR]$ with the following guarantee. Define
\[\kappa \defeq \frac{LR}{\eps},\; K \defeq \Par{\frac R r}^{\frac 2 3},\;\text{ and } \lams \defeq \frac{\eps K^2}{R^2}\log^2(K).\]
For a universal constant $\cba > 0$, $\BA$ produces $x \in \R^d$ such that $\E F(x) \le F(x^\star) + \eps$. Letting $\calT(\phi, \lam, r) \ge d$ and $\calD(\phi, \lam, r) \ge \log(d)$ denote the work and depth used by a $(\phi, \lam, r)$-ball optimization oracle, the computational complexity of $\BA$ is:
\begin{gather*}
\cba K \log^3\Par{\frac{R\kappa}{r}} \calT\Par{\frac{\lams r^2}{\cba}, \frac{\lams}{\cba}, r} \\
+ \sum_{j \in [\lceil \log_2 K + \cba \rceil]} \cba 2^{-j}K\log\Par{\frac{R\kappa}{r}} \calT\Par{\frac{\lams r^2}{\cba 2^j} \log^{-2}\Par{\frac{R\kappa} r}, \frac{\lams}{\cba}, r},
\end{gather*}
and the depth of $\BA$ is:
\begin{gather*}
\cba K \log^3\Par{\frac{R\kappa}{r}} \calD\Par{\frac{\lams r^2}{\cba}, \frac{\lams}{\cba}, r} \\
+ \sum_{j \in [\lceil \log_2 K + \cba \rceil]} \cba 2^{-j}K\log\Par{\frac{R\kappa}{r}} \calD\Par{\frac{\lams r^2}{\cba 2^j} \log^{-2}\Par{\frac{R\kappa} r}, \frac{\lams}{\cba}, r}.
\end{gather*}
\end{proposition}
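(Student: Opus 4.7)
The result is stated as Proposition~2 of \cite{CarmonJJLLST23}, so the plan is to invoke their ball acceleration framework as a black box rather than redevelop it. Their approach is a Monteiro--Svaiter-style accelerated proximal point method in which each proximal step is replaced by a call to a $(\phi, \lam, r)$-ball optimization oracle. I would set up the algorithm $\BA$ to maintain two sequences, an iterate $x_t$ and a momentum point $y_t$, and at every outer step (i) choose a center $\bar x_t$ on the segment between them from a Monteiro--Svaiter scalar condition, (ii) call $\obo$ at $(\bar x_t, \lams/\cba, r)$ with an appropriate target accuracy $\phi$, and (iii) update $(x_{t+1}, y_{t+1})$ from the returned point. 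The regularization strength $\lams = \eps K^2 \log^2(K)/R^2$ is calibrated so that after $K = (R/r)^{2/3}$ outer steps the primal gap drops from its initial value $O(LR)$ down to $O(\eps)$.

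To obtain the rate $K = (R/r)^{2/3}$, I would track a Lyapunov potential combining the primal gap $F(x_t) - F(\xs)$ with a distance term $\norm{y_t - \xs}^2$, and show that each outer step decreases this potential by an amount controlled by $\lams r^2$, which is the energy each ball-constrained subproblem extracts. Solving for the step count required to reach accuracy $\eps$ under the movement constraint $\norm{x_{t+1} - \bar x_t} \le r$ gives $K = (R/r)^{2/3}$ after balancing $\lams$ against the horizon $R^2$. The two-level structure of the complexity bound is then a consequence of error accounting: the main term $\cba K \log^3(R\kappa/r)$ at tolerance $\phi = \lams r^2/\cba$ captures the nominal progress together with a line search over the Monteiro--Svaiter scalar and a boost-to-high-probability over $O(\log)$ repetitions needed to union-bound over all outer steps; the geometric sum at level $j$, using $\cba 2^{-j} K \log(R\kappa/r)$ oracle calls of accuracy $\lams r^2 \log^{-2}(R\kappa/r)/(\cba 2^j)$, handles the variance accumulation of randomized proximal steps, with higher $j$ demanding sharper accuracy but contributing geometrically fewer calls.

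The main obstacle, and the technically delicate core of the \cite{CarmonJJLLST23} analysis, is precisely this error-accumulation step. With an exact proximal oracle, the accelerated rate is classical; replacing exact steps with an inexact ball oracle introduces two sources of slack, namely the additive error $\phi$ of each call and the discrepancy between the true prox point and any point within radius $r$ of $\bar x_t$. Showing that the $(R/r)^{2/3}$ rate survives requires choosing the tolerances $\phi_j$ to decay quickly enough with $j$ that the total error summed over the $O(K \log(R\kappa/r))$ oracle calls remains $O(\eps)$, yet not so quickly that the aggregate work $\sum_j 2^{-j} K \log(R\kappa/r) \cdot \calT(\phi_j, \lams/\cba, r)$ blows up. Since we rely on Proposition~2 of \cite{CarmonJJLLST23} as a black box, we may take this calibration, along with the associated boost-to-high-probability argument that supplies the $\log^3(R\kappa/r)$ overhead, as already established there.
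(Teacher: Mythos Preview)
Your proposal is correct and matches the paper's treatment: the paper does not prove this proposition at all but simply imports it verbatim as Proposition~2 of \cite{CarmonJJLLST23} and uses it as a black box. Your opening sentence already captures this; the remainder of your write-up is a plausible high-level sketch of what the argument in \cite{CarmonJJLLST23} contains, but it is strictly additional commentary beyond what the present paper requires or provides.
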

 
 \paragraph{Implication and approach.}
Proposition~\ref{prop:ball_accel} allows us to focus on designing ball optimization oracles for $f_\rho$ in the remainder of the paper. Concretely, in our algorithm each oracle approximately solves a problem of the form
\begin{equation}\label{eq:frldef}
\min_{x \in \ball_{\bx}(r)} \frlbx(x) \defeq f_\rho(x) + \frac \lam 2 \norm{x - \bx}^2,
\end{equation}
where $f_\rho$ is the convolution of the density function of $\Nor(0, \rho^2 \id_d)$ and the function of interest $f$, and $\lam > 0$ is a regularization parameter. The key challenge we address is how to implement a ball oracle for the Gaussian convolution efficiently in parallel. Since pioneering work of \cite{DuchiBM12} it has been observed that stability of the Gaussian convolution can be useful in this endeavor, because higher moments of the Gaussian convolution can be efficiently approximated via parallel queries. \cite{DuchiBM12} leveraged smoothness (stability of the gradient) in accelerated gradient descent, \cite{BubeckJLLS19} leveraged higher-order smoothness and concentration to approximate the Gaussian convolution over a large regions, and \cite{CarmonJJLLST23} leveraged how it is possible to obtain stochastic gradients for the Gaussian convolution at one point by sampling the stochastic gradient oracle a nearby point. 

Our work exploits a new stability property of Gaussian convolutions (Lemma~\ref{lem:conv_stable}) that we introduce in the next \Cref{sec:overview:hessian-stable}. We leverage this property to essentially reduce implementing a ball oracle to solving a linear system induced by the Hessian of the Gaussian convolution, which we discuss how to do efficiently in parallel in \Cref{sec:intro:hess_opt,sec:overview:parallel-rank-1}. Along the way, we introduce several tools which may be of broader interest to the stochastic optimization theory community.

\subsection{Hessian stability of the Gaussian convolution} 
\label{sec:overview:hessian-stable}

Our starting point for designing our ball optimization oracle is the observation that the Hessian of the convolved objective $f_\rho$ is stable, in a precise sense, over balls of small radii $r \ll \rho$. To explain, note that for any $x, y \in \R^d$,
\[\nabla^2 f_\rho(x) = \int_{\R^d} \nabla^2 f(x - \xi) \gamma_\rho(\xi) \dd \xi = \int_{\R^d} \nabla^2 f(y - \xi) \gamma_\rho(x - y + \xi) \dd \xi. \]
So, if $\gamma_\rho(x - y + \xi) \approx \gamma_\rho(\xi)$ multiplicatively for all $\xi$, then similarly $\nabla^2 f_\rho(x) \approx \nabla^2 f_\rho(y)$. Unfortunately, this is not true: directly expanding shows
\[\frac{\gamma_\rho(x - y + \xi)}{\gamma_\rho(\xi)} = \exp\Par{\frac{1}{2\rho^2}\Par{\inprod{2\xi}{y - x} - \norm{x - y}^2}}. \]
If $\xi$ is large in the direction $y - x$, then the first term in the exponential dominates. Standard Gaussian tail bounds show the measure of such poorly-behaved $\xi$ is small, but we do not have an a priori upper bound on $\nabla^2 f$ at the corresponding points (as $f$ is possibly nonsmooth). Hence, it is unclear how to quantify the effect of these points. We leverage the simple observation that $f_{\rho} = f \ast \gamma_\rho$ is the convolution of $\gamma_{\rho/2}$ and that $f_{\rho/2}$ is a smooth function with a bounded Hessian. Consequently, $\nabla^2 f_\rho(x) \approx \nabla^2 f_\rho(y)$ does hold for $x$ and $y$ in a ball of small radii $r \ll \rho$ up to a small additive factor (which we can control by choosing the radii). To formalize this, we use the following notation for comparing deviations between a pair of PSD matrices in the following \Cref{def:mat_approx}. We then bound the stability of Hessians of the Gaussian convolution in \Cref{lem:conv_stable}.

\begin{definition}[Matrix approximation]\label{def:mat_approx}
	We say that PSD $\ma\in \R^{d \times d}$ is an \emph{$(\eadd, \emul)$-approximation} to
	PSD $\mb \in \R^{d \times d}$ if $
	\ma \preceq \exp(\emul) \mb + \eadd \id_d$ and $\mb \preceq \exp(\emul) \ma + \eadd \id_d$.
\end{definition}

We choose this Definition~\ref{def:mat_approx} because it is symmetric: $\ma$ is an $(\eadd, \emul)$-approximation of $\mb$ if and only if $\mb$ is an $(\eadd, \emul)$-approximation of $\ma$. This symmetry reflects our setting of comparing Hessians of the Gaussian convolution for pairs of points. It is straightforward that Definition~\ref{def:mat_approx} implies other notions of additive-multiplicative approximation, e.g., the less-symmetric alternative $\exp(-\emul) \mb - \eadd \id_d \preceq \ma \preceq \exp(\emul) \mb + \eadd \id_d$.

\begin{lemma}\label{lem:conv_stable}
Let $f: \R^d \to \R$ be convex and $L$-Lipschitz, and $\rho > 0$. Then for $x, y \in \R^d$, $\delta \in (0, 1)$, $\nabla^2 f_\rho(x)$ is an $(\eadd, \emul)$-approximation to $\nabla^2 f_\rho(y)$, following Definition~\ref{def:mat_approx}, for
\[\emul \defeq \frac{\norm{x - y}^2}{\rho^2} + \frac{2\norm{x - y}\sqrt{\log \frac 1 \delta}} \rho,\; \eadd \defeq \frac{\sqrt{2}L\delta}{\rho}.\]
\end{lemma}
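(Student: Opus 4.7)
The plan is to exploit the semigroup property of Gaussians, $\gamma_{\rho/\sqrt 2} \ast \gamma_{\rho/\sqrt 2} = \gamma_\rho$, to rewrite $f_\rho = f_{\rho/\sqrt 2} \ast \gamma_{\rho/\sqrt 2}$; this is precisely the structural insight the paper highlights just before the lemma. Applying Fact~\ref{fact:gconv} to $f_{\rho/\sqrt 2}$ (rather than to $f$), the Hessian $\nabla^2 f_{\rho/\sqrt 2}$ is everywhere defined, PSD, and satisfies $\nabla^2 f_{\rho/\sqrt 2}(\cdot) \preceq (\sqrt 2 L/\rho)\id_d$. Differentiating the convolution representation twice then gives
\[\nabla^2 f_\rho(x) = \int_{\R^d} \nabla^2 f_{\rho/\sqrt 2}(x - \eta)\, \gamma_{\rho/\sqrt 2}(\eta)\, \dd \eta,\]
and the change of variables $\eta \leftarrow \eta - (y - x)$ in the analogous expression for $\nabla^2 f_\rho(y)$ yields
\[\nabla^2 f_\rho(y) = \int_{\R^d} \nabla^2 f_{\rho/\sqrt 2}(x - \eta)\, \gamma_{\rho/\sqrt 2}(\eta + (y - x))\, \dd \eta,\]
so both Hessians are integrals of the \emph{same} PSD integrand $\nabla^2 f_{\rho/\sqrt 2}(x - \eta)$ against shifted Gaussian densities.

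Next, I partition the integration domain into a good set $S \defeq \{\eta : \inprod{\eta}{y - x} \le \norm{y - x} \rho \sqrt{\log(1/\delta)}\}$ and its complement $S^c$. A direct computation of the Gaussian likelihood ratio (with variance $\sigma^2 = \rho^2/2$, so that $2\sigma^2 = \rho^2$) gives
\[\frac{\gamma_{\rho/\sqrt 2}(\eta)}{\gamma_{\rho/\sqrt 2}(\eta + (y - x))} = \exp\Par{\frac{2\inprod{\eta}{y - x} + \norm{y - x}^2}{\rho^2}},\]
which is at most $\exp(\emul)$ on $S$, for the exact $\emul$ in the lemma statement. Since $\nabla^2 f_{\rho/\sqrt 2} \succeq 0$, this yields the Loewner comparison
\[\int_S \nabla^2 f_{\rho/\sqrt 2}(x - \eta)\, \gamma_{\rho/\sqrt 2}(\eta)\, \dd\eta \preceq \exp(\emul) \int_S \nabla^2 f_{\rho/\sqrt 2}(x - \eta)\, \gamma_{\rho/\sqrt 2}(\eta + (y - x))\, \dd \eta \preceq \exp(\emul)\, \nabla^2 f_\rho(y).\]

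On the bad set $S^c$ I use the a priori operator norm bound $\nabla^2 f_{\rho/\sqrt 2}(x - \eta) \preceq (\sqrt 2 L/\rho)\id_d$ to reduce the contribution to bounding the Gaussian mass of $S^c$. Under $\eta \sim \gamma_{\rho/\sqrt 2}$, $\inprod{\eta}{y - x}$ is centered Gaussian with standard deviation $\rho\norm{y - x}/\sqrt 2$, so the subgaussian tail bound $\Pr[Z > s] \le e^{-s^2/2}$ gives $\Pr_{\eta \sim \gamma_{\rho/\sqrt 2}}[\eta \in S^c] \le \delta$. The bad-set integral is thus $\preceq (\sqrt 2 L \delta/\rho)\id_d = \eadd \id_d$, which combined with the good-set bound yields $\nabla^2 f_\rho(x) \preceq \exp(\emul)\nabla^2 f_\rho(y) + \eadd \id_d$. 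The reverse inequality follows by swapping the roles of $x$ and $y$, since both $\emul$ and $\eadd$ depend only on $\norm{x - y}$.

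The main conceptual obstacle is the one the authors flag: a direct comparison via $\nabla^2 f_\rho = \int \nabla^2 f \cdot \gamma_\rho$ fails because the Gaussian likelihood ratio blows up along the direction $y - x$, and one has no a priori control on $\nabla^2 f$ at the offending points (indeed $f$ need not be twice differentiable at all). Pre-smoothing by a Gaussian of scale $\rho/\sqrt 2$, leaving a residual Gaussian of the same scale to shift against, is exactly what provides the $O(L/\rho)$ operator-norm bound on the integrand that tames the bad-set contribution into the additive $\eadd$ instead of leaving it unbounded.
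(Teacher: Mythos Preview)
Your proof is correct and follows essentially the same approach as the paper: both use the semigroup decomposition $f_\rho = f_{\rho/\sqrt 2} \ast \gamma_{\rho/\sqrt 2}$ to get the operator-norm bound $\nabla^2 f_{\rho/\sqrt 2} \preceq (\sqrt 2 L/\rho)\id_d$, split the integral according to the size of $\inprod{\eta}{y - x}$, bound the Gaussian likelihood ratio on the good set, and use a Gaussian tail bound on the bad set. The only cosmetic difference is that the paper introduces an auxiliary threshold parameter $r$ and optimizes it at the end (obtaining $r^2 = \rho\norm{x - y}\sqrt{\log(1/\delta)}$), whereas you directly set the threshold to $\norm{y - x}\rho\sqrt{\log(1/\delta)}$; these are the same cut.
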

\begin{proof}
Without loss of generality (as shifting by a constant vector does not affect the problem assumptions), let $y = \0_d$. Furthermore, let $g \defeq f_{\rho}$ and $h \defeq f_{\rho / \sqrt{2}}$, so that $g = h \ast \gamma_{\rho / \sqrt{2}}$. Note that by Fact~\ref{fact:gconv}, $\nabla^2 h$ is bounded by $\frac{\sqrt 2 L}{\rho} \id_d$ pointwise, and for all $\xi \in \set_r \defeq \{\xi \in \R^d \mid \inprod{x}{\xi} \ge - r^2\}$,
\begin{equation}\label{eq:small_region}
\gamma_{\rho / \sqrt 2}\Par{\xi} 
= \exp\Par{\frac{\norm{x}^2-2\inprod{x}{\xi}}{\rho^2}} \gamma_{\rho / \sqrt 2}(\xi - x) 
\le \exp\Par{\frac{\norm{x}^2 + 2r^2}{\rho^2}}\gamma_{\rho / \sqrt 2}(\xi - x).
\end{equation}
We hence have, by Fact~\ref{fact:gconv} applied to $h$ and $g$,
\begin{align*}
\nabla^2 g(x) &= \int_{\set_r} \nabla^2 h(x - \xi) \gamma_{\rho / \sqrt 2}(\xi) \dd \xi + \int_{\R^d \setminus \set_r} \nabla^2 h(x - \xi) \gamma_{\rho / \sqrt 2}(\xi) \dd \xi \\
&\preceq \exp\Par{\frac{\norm{x}^2 + 2r^2}{\rho^2}} \int_{\set_r} \nabla^2 h(x - \xi) \gamma_{\rho / \sqrt 2}(\xi - x) \dd \xi + \frac{\sqrt{2}L}{\rho} \Par{\Pr_{\xi \sim \Nor(0, \frac{\rho^2} 2 \id_d)}\Brack{\xi \in \set_r}} \id_d \\
&\preceq \exp\Par{\frac{\norm{x}^2 + 2r^2}{\rho^2}} \nabla^2 g(\0_d) + \frac{\sqrt{2}L}{\rho} \exp\Par{-\frac{r^4}{\rho^2\norm{x}^2}}\id_d.
\end{align*}
In the second line, we used \eqref{eq:small_region} and that $\nabla^2 h \preceq \frac{\sqrt{2}L}{\rho} \id_d$, and in the last line, we used standard tail bounds on the Gaussian error function \cite[Eq.\ 7.8.3]{DLMF}. The conclusion follows by substituting the specific value $r = \sqrt{\rho\norm{x}} \cdot \log^{\frac 1 4}(\frac 1 \delta) $, and using symmetry of \Cref{def:mat_approx} in $x$ and $y$.
\end{proof}

Our ball optimization oracle objective $\frlbx$ in \eqref{eq:frldef} is regularized, so when the additive term is dominated by the regularizer's Hessian, we can show $\frlbx$ is multiplicatively second-order stable. This is the key fact that we use to facilitate the implementation of our ball optimization oracles.

\begin{corollary}\label{cor:conv_reg_stable}
Let $\lam > 0$ and suppose that $y \in \ball_x(r)$ for $0 < r \le \frac \rho 6 \cdot \log^{-\half}(\frac{2L}{\lam\rho})$. Then, $\nabla^2 \frlbx(x)$ is a $(0, \log 2)$-approximation to $\nabla^2 \frlbx(y)$, following Definition~\ref{def:mat_approx}.
\end{corollary}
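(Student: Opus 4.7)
The plan is to reduce this to Lemma~\ref{lem:conv_stable} applied to $\nabla^2 f_\rho$, then absorb the additive error of that lemma using the $\lambda \id_d$ term contributed by the regularizer $\frac{\lambda}{2}\|x - \bar{x}\|^2$ (whose Hessian is simply $\lambda \id_d$). Concretely, I would start by writing $\nabla^2 \frlbx(x) = \nabla^2 f_\rho(x) + \lambda \id_d$ (and the analogous identity at $y$), so that the desired $(0, \log 2)$-approximation becomes the pair of inequalities
\[
\nabla^2 f_\rho(x) + \lambda \id_d \preceq 2\bigl(\nabla^2 f_\rho(y) + \lambda \id_d\bigr),
\]
and symmetrically. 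This rearranges to the requirement $\nabla^2 f_\rho(x) \preceq 2 \nabla^2 f_\rho(y) + \lambda \id_d$, i.e., it suffices to prove $\nabla^2 f_\rho(x)$ is an $(\eadd, \emul)$-approximation of $\nabla^2 f_\rho(y)$ in the sense of Definition~\ref{def:mat_approx} with $\emul \le \log 2$ and $\eadd \le \lambda$.

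Next I would invoke Lemma~\ref{lem:conv_stable} with the specific choice $\delta \defeq \frac{\lambda \rho}{\sqrt{2}\, L}$, so that $\eadd = \frac{\sqrt{2}\, L \delta}{\rho} = \lambda$ exactly, matching the tolerance we can absorb into the regularizer. With this choice $\log(1/\delta) = \log(\frac{\sqrt{2}\, L}{\lambda \rho}) \le \log(\frac{2L}{\lambda \rho})$, so the hypothesis $r \le \tfrac{\rho}{6} \log^{-1/2}(\tfrac{2L}{\lambda\rho})$ gives
\[
\frac{r}{\rho} \sqrt{\log(1/\delta)} \le \frac{1}{6}, \qquad \frac{r^2}{\rho^2} \le \frac{1}{36}.
\]

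Plugging these into the definition of $\emul$ from Lemma~\ref{lem:conv_stable} with $\|x-y\| \le r$ yields
\[
\emul = \frac{\|x-y\|^2}{\rho^2} + \frac{2\|x-y\|\sqrt{\log(1/\delta)}}{\rho} \le \frac{1}{36} + \frac{2}{6} = \frac{13}{36} < \log 2,
\]
as required. The bound from Lemma~\ref{lem:conv_stable} is symmetric in $x$ and $y$, so both directions of the Definition~\ref{def:mat_approx} inequality hold simultaneously; adding $\lambda \id_d$ to both sides then converts $(\lambda, \log 2)$ approximation of $\nabla^2 f_\rho$ into $(0, \log 2)$ approximation of $\nabla^2 \frlbx$, completing the argument.

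This is really a numerical bookkeeping corollary rather than a conceptually hard step — the only choice that matters is tuning $\delta$ so the additive slack in Lemma~\ref{lem:conv_stable} matches the available $\lambda \id_d$ buffer from the regularizer, and then verifying that the constraint on $r$ leaves enough room to keep $\emul$ below $\log 2$. The slight care needed is ensuring the constants in the hypothesis on $r$ (the $\tfrac{1}{6}$ and the choice of $\log(\tfrac{2L}{\lambda\rho})$ rather than $\log(\tfrac{\sqrt{2}\,L}{\lambda\rho})$) are consistent with the $\delta$ used, which the bounds above confirm.
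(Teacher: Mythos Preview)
Your proposal is correct and follows essentially the same approach as the paper: both decompose $\nabla^2 \frlbx = \nabla^2 f_\rho + \lam\id_d$, choose $\delta = \frac{\lam\rho}{\sqrt{2}L}$ so that the additive error from Lemma~\ref{lem:conv_stable} equals $\lam$, and absorb it into the regularizer. Your version is in fact slightly more explicit in verifying $\emul \le \log 2$ numerically, whereas the paper leaves this to ``our parameter settings.''
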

\begin{proof}
Note that $x \in \ball_y(r)$ if and only if $y \in \ball_x(r)$, so by symmetry it suffices to show $\nabla^2 \frlbx(x) \preceq 2\nabla^2 \frlbx(y)$. Also, $\nabla^2 \frlbx(x) = \nabla^2 f_\rho(x) + \lam \id_d$, and by Lemma~\ref{lem:conv_stable} and our parameter settings,
\begin{align*}
\nabla^2 f_\rho(x) - \frac{\sqrt{2}L\delta}{\rho} \id_d \preceq 2\nabla^2 f_\rho(y)
\text{ and }
\lam \id_d + \frac{\sqrt{2}L\delta}{\rho} \id_d \preceq 2\lam \id_d,
\end{align*}
for $\delta \defeq \frac{\lam\rho}{\sqrt{2}L}$.
Adding the above two inequalities yields the conclusion.
\end{proof}

\subsection{Hessian optimization without Hessian approximation} 
\label{sec:intro:hess_opt}

Multiplicative Hessian stability in small balls was a key building block of several algorithms in \cite{CarmonJJJLST20}, which introduced the ball acceleration framework. Specifically, \cite{CarmonJJJLST20} considered problems such as logistic and $\ell_p$ regression, where the objective's Hessian is both explicit and locally multiplicatively stable. This stability allows for efficient ball optimization via variants of Newton's method (e.g., gradient descent preconditioned by the objective's Hessian at the ball center).

In our setting, the use of such Newton's methods for ball optimization is complicated by two factors: our parallel implementation requirement, and the fact that we only have implicit access to $\nabla^2 \frlbx$, as it involves evaluating an integral. One useful characterization is that\footnote{The $\lam \id_d$ component of $\nabla^2 \frlbx$ is explicit, so it suffices to evaluate $\nabla^2 f_\rho$.}
\begin{align*}
\nabla^2 f_\rho(\bx) = \frac 1 {\rho^2} \int_{\R^d} \underbrace{\nabla f(x + \xi)\xi^\top}_{\defeq \mm_{\xi}} \gamma_\rho(\xi) \dd \xi,
\end{align*}
so a natural way to proceed is to estimate $\nabla^2 f_\rho(\bx)$ as the average of a small number of randomly-sampled $\mm_\xi$. Unfortunately, matrix concentration bounds such as the matrix Bernstein inequality (which are tight in the worst case \cite{Tropp15}) yield sample complexities which depend on 
\begin{equation}\label{eq:vdef}\max\Brace{\underbrace{\normop{\E \Brack{\mm_\xi \mm_\xi^\top}}}_{\defeq V_1}, \underbrace{\normop{\E \Brack{\mm_\xi^\top \mm_\xi}}}_{\defeq V_2}},\end{equation}
for measuring convergence of random averages to $\nabla^2 f_\rho(\bx)$. In our case, $V_2$ can be significantly smaller than $V_1$: the former can be upper bounded by $L^2$ (as $\E \xi\xi^\top = \rho^2\id_d$), but the latter grows as $L^2 d$ (as $\E \xi^\top\xi = \rho^2 d$). A similar issue arises if one replaces the use of $\mm_\xi$ with its symmetrized counterpart $\ms_\xi \defeq \half(\mm_\xi + \mm_\xi^\top)$. This results in requiring at least $d$ samples for Hessian approximation which is too many for our purposes (note that, e.g., the entire query complexity of \cite{CarmonJJLLST23} is $o(d)$ for moderate $\eps$), and appears to be an obstacle for use of Newton's method.

We circumvent this obstacle by treating the implementation of each Newton step as a stochastic optimization problem, which breaks the symmetry between the dependence on $V_1$ and $V_2$. Specifically, each Newton iteration requires approximately solving a problem 
\begin{equation}\label{eq:newton_def}\min_{x \in \ball_{\bx}(r)} \inprod{g}{x} + x^\top \nabla^2 f_\rho(\bx)x + \frac \lam 2 \norm{x - \bx}^2,\end{equation}
for some vector $g$. We can therefore design a stochastic estimate $g + 2\mm_\xi x$ of the gradient of the objective in \eqref{eq:newton_def}, whose second moment only depends on $V_2 = \E \mm_\xi^\top\mm_\xi$ and not $V_1$. Interestingly, using $\ms_\xi$ to estimate the gradient of \eqref{eq:newton_def} would run into the same issue as before (where the convergence rate also depends on $V_1 \approx L^2 d$), so using asymmetric estimates is crucial for our analysis.

\subsection{Parallel maintenance of rank-one updates} 
\label{sec:overview:parallel-rank-1}

An additional challenge is implementing our strategy for solving \eqref{eq:newton_def} efficiently in parallel. We show in Section~\ref{ssec:binary_search} how to remove the constraint in \eqref{eq:newton_def} by developing a binary search procedure for an appropriate Lagrange multiplier, so it suffices to optimize over $\R^d$, subject to additional regularization. To facilitate this reduction, in \Cref{ssec:tournament}, we provide a technique for improving our expected error bounds to high probability bounds (similar to a recent technique in \cite{SidfordZ23}).

With these reductions in place it suffices to solve unconstrained variants of \eqref{eq:newton_def} in parallel. In \Cref{ssec:composite}, we provide a general stochastic composite gradient descent algorithm compatible with the stochastic oracles discussed in \Cref{sec:intro:hess_opt}. It then turns out, as intended, that the resulting iterates of this stochastic composite gradient descent algorithm are highly-structured (as each $\mm_\xi$ is rank-one). This structure is captured by the following linear algebraic maintenance problem, which we solve in \Cref{sec:quadratic}, allowing for the parallel implementation of our stochastic gradient method.

\begin{restatable}{problem}{restaterankonemaintainall}\label{prob:rankone_maintain_all}
Let $T \in \N$. For inputs $\{x_0, \{u_t, v_t, w_t\}_{t \in [T]}\} \subset \R^d$ and $\{c_t\}_{t \in [T]} \subset \R$, we wish to compute all $\{x_t\}_{t \in [T]}$ defined by the recurrence relation
\[x_t \defeq c_t\Par{\Par{\id_d - u_t v_t^\top} x_{t - 1}} + w_t.\]
\end{restatable}

In our setting, $c_t$ arises due to the regularization component, $w_t$ captures the first-order part of \eqref{eq:newton_def}, and $u_t, v_t$ capture the (rank-one) estimate of the second-order part of \eqref{eq:newton_def}. Note that if the term $w_t$ did not exist, solving Problem~\ref{prob:rankone_maintain_all} would amount to computing the product of $T$ rank-one matrices in parallel, which can be done using a divide-and-conquer technique (see Lemma~\ref{lem:lowrank}). By using a relatively lightweight combination of divide-and-conquer and fast matrix multiplication, we show that Problem~\ref{prob:rankone_maintain_all} can similarly be solved in polylogarithmic depth and $O(dT^{\omega - 1})$ work.

Applying this parallel implementation of our stochastic composite gradient descent algorithm, though our ``in expectation-to-high probability'' and binary search reductions, yields our ball optimization oracle implementation. When applied in the ball optimization framework to the Gaussian convolution, this then yields our main result, Theorem~\ref{thm:main_formal}. While there are a few steps of indirection, we believe that reducing parallel optimization to stochastic quadratic optimization is an interesting and key contribution by itself. We hope the structural facts that enable this reduction and the algorithmic techniques that make it yield an efficient algorithm may have broader utility. %
\section{Parallel optimization of quadratic subproblems}
\label{sec:quadratic}

In this section, we develop an efficient parallel optimization method for solving structured unconstrained quadratics of the following form: %
\begin{equation}\label{eq:unconstrained}\min_{x \in \R^d}\inprod{g + v}{x} + \norm{x - z}^2_{\mh} + \frac \Lam 2\norm{x}^2,\text{ for } g, v \in \R^d,\; \mh \in \R^{d \times d},\;\Lam \in \R_{\ge 0}.\end{equation}
In particular, we consider a stochastic setting where instead of explicit access to $g$ or $\mh$ we assume sample access to random variables $\tg \in \R^d$ and $\tmh \in \R^{d \times d}$ (not necessarily symmetric), such that
\begin{equation}\label{eq:tgmhdef} \E \tg = g\text{ and }\E \tmh = \mh.\end{equation}
Our consideration of this setting is motivated by the special case when
\begin{equation}\label{eq:specific_gaussian}
g = \nabla f_\rho(z)\text{ and }\mh = \nabla^2 f_\rho(\0_d).
\end{equation}
Objectives of the form \eqref{eq:unconstrained}, \eqref{eq:specific_gaussian} arise in Newton's method for implementing a ball optimization oracle for $\frlbx$ as defined in \eqref{eq:frldef}, where $\bx \gets \0_d$ without loss of generality by shifting the problem domain. The additional quadratic $\inprod{v}{x} + \Lam \norm{x}^2$ arises due to regularization and a binary search on a Lagrange multiplier to enforce the domain constraint in \eqref{eq:frldef}. The two parts of this reduction (Newton's method and binary search) are respectively derived in Sections~\ref{ssec:newton} and~\ref{ssec:binary_search}. 

In Section~\ref{ssec:composite}, we give an initial solver for the problem \eqref{eq:unconstrained} that has an expected error guarantee and we show how to implement this solver in parallel in Section~\ref{ssec:maintenance}. We then show how to boost this guarantee to hold with high probability using a reduction we develop in Section~\ref{ssec:tournament}. Finally, we assemble these components to give the main exports of this section in Section~\ref{ssec:combine}.

\subsection{Composite stochastic optimization}\label{ssec:composite}

In this section, we fix $\Lam \in \R_{\ge 0}$ and $v, z \in \R^d$ throughout, and decompose \eqref{eq:unconstrained} into two parts:
\begin{equation}\label{eq:hdef}
\begin{aligned}
	 h(x) \defeq h_1(x) + h_2(x)
	 \text{ where }
h_1(x) \defeq \inprod{g}{x} + \norm{x - z}^2_{\mh}
	\text{ and }
h_2(x) \defeq \inprod{v}{x} + \frac \Lam 2 \norm{x}^2\;.
\end{aligned}
\end{equation}
We treat the objective in \eqref{eq:unconstrained} as a composite objective $h_1 + h_2$, where we can exactly optimize over $h_2$, and we have stochastic access to $h_1$. Specifically, we use that $g_1(x) \defeq \tg + 2\tmh(x - z)$ is an unbiased estimate of $\nabla h_1(x)$. More broadly, we design an algorithm for minimizing $h_1 + h_2$ under the assumption that for some $L_1, L_2 \in \R_{\ge 0}$, 
\begin{equation}\label{eq:dist_var_bound}\E\Brack{\norm{g_1(x)}^2} \le L_1^2 + L_2^2 \norm{x - z}^2 \text{ for all } x \in \R^d.\end{equation}
To motivate \eqref{eq:dist_var_bound}, Fact~\ref{fact:gconv} shows that in the setting of Problem~\ref{prob:sco}, when $g, \mh$ are as in \eqref{eq:specific_gaussian}, we can use 
\begin{equation}\label{eq:grad_unbiased}
g_1(x) = g(z - \xi) + \frac{2}{\rho^2} \inprod{\xi}{x - z} g(\xi) \text{ for } \xi \sim \Nor(\0_d, \rho^2 \id_d)
\end{equation}
as our unbiased estimator. We give a second moment bound on the estimator in \eqref{eq:grad_unbiased}.

\begin{lemma}\label{lem:second_moment}
In the setting of Problem~\ref{prob:sco}, for $x, z \in \R^d$, where $\E$ is taken over $\xi \sim \Nor(\0_d, \rho^2 \id_d)$ and the randomness of querying $g$ at $z - \xi$ and $\xi$,
\[\E\Brack{\norm{g(z - \xi) + \frac{2}{\rho^2} \inprod{\xi}{x - z}g(\xi)}^2} \le 2L^2 + \frac{8L^2}{\rho^2}\norm{x - z}^2.\]
\end{lemma}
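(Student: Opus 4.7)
The plan is to split the squared norm using the elementary inequality $\|a + b\|^2 \le 2\|a\|^2 + 2\|b\|^2$ and then bound each of the two resulting terms separately. Specifically, I would write
\[
\E\Brack{\norm{g(z - \xi) + \frac{2}{\rho^2} \inprod{\xi}{x - z}g(\xi)}^2} \le 2\E\Brack{\norm{g(z-\xi)}^2} + \frac{8}{\rho^4}\E\Brack{\inprod{\xi}{x-z}^2 \norm{g(\xi)}^2}.
\]

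For the first term, the oracle assumption from \Cref{prob:sco} gives $\E[\|g(y)\|^2] \le L^2$ for any query point $y$. Since $z - \xi$ is a (random) point in $\R^d$ and the oracle's second moment bound holds pointwise (taking the tower expectation over $\xi$ first, then the oracle randomness at $z-\xi$), this immediately yields $\E\|g(z-\xi)\|^2 \le L^2$, contributing $2L^2$.

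For the second term, I would use the tower property by conditioning on $\xi$. Since $g(\xi)$ is obtained by querying the stochastic oracle at $\xi$, and the oracle's randomness at $\xi$ is independent of $\xi$ itself, we have $\E[\|g(\xi)\|^2 \mid \xi] \le L^2$. Thus
\[
\E\Brack{\inprod{\xi}{x-z}^2 \norm{g(\xi)}^2} = \E_\xi\Brack{\inprod{\xi}{x-z}^2 \E\Brack{\norm{g(\xi)}^2 \mid \xi}} \le L^2 \E_\xi\Brack{\inprod{\xi}{x-z}^2}.
\]
Since $\xi \sim \Nor(\0_d, \rho^2 \id_d)$, we have $\E_\xi[\xi\xi^\top] = \rho^2 \id_d$, so $\E_\xi[\inprod{\xi}{x-z}^2] = \rho^2 \|x-z\|^2$. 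Multiplying by the prefactor $\tfrac{8}{\rho^4}$ gives $\tfrac{8L^2}{\rho^2}\|x-z\|^2$, and summing the two bounds yields the claim.

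There is no real obstacle here; the only subtle point is ensuring the conditional independence structure of the oracle queries at $z - \xi$ and at $\xi$ is handled correctly (in particular that the second-moment oracle bound applies even when queried at a random point, which follows from applying it pointwise and taking the outer expectation). Everything else is a direct calculation using the Gaussian covariance identity and the triangle-like inequality for squared norms.
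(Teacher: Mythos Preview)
Your proposal is correct and follows essentially the same argument as the paper: split via $\|a+b\|^2 \le 2\|a\|^2 + 2\|b\|^2$, bound the first term by the oracle's second-moment assumption, and handle the second term by conditioning on $\xi$ and using $\E_\xi[\inprod{\xi}{x-z}^2] = \rho^2\|x-z\|^2$. The paper's proof is identical in structure and detail.
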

\begin{proof}
Let $a \defeq g(z - \xi)$ and $b \defeq \frac{2}{\rho^2} \inprod{\xi}{x - z}g(\xi)$, and note that $\E\norm{a + b}^2 \le 2\E \norm{a}^2 + 2\E \norm{b}^2$. Further, by definition $\E \norm{a}^2 \le L^2$, so it suffices to bound $\E \norm{b}^2$. We conclude by computing:
\begin{align*}
\E\Brack{\norm{\inprod{\xi}{x - z}g(\xi)}^2} &= \E_{\xi \sim \Nor(\0_d, \rho^2 \id_d)}\Brack{\E \Brack{\norm{g(\xi)}^2 \mid \xi}\inprod{\xi}{x - z}^2} \\
&\le L^2 \E_{\xi \sim \Nor(\0_d, \rho^2 \id_d)}\Brack{\inprod{\xi}{x - z}^2} = L^2\rho^2\norm{x - z}^2.
\end{align*}
\end{proof}
In other words, in the setting of Problem~\ref{prob:sco}, the assumption \eqref{eq:dist_var_bound} holds with $L_1^2 = 2L^2$ and $L_2^2 = \frac{8L^2}{\rho^2}$. We now move to the abstraction of \eqref{eq:hdef}, \eqref{eq:dist_var_bound}, and design a general-purpose algorithm for this optimization problem. At the end of the section, we specialize our method to Problem~\ref{prob:sco}. 

Due to the unconstrained nature of our problem and the dependence \eqref{eq:dist_var_bound} on movement from $z$, we take care to ensure that the iterates of our algorithm do not drift by too much. This is the primary challenge faced in this setting. We give an algorithm (Algorithm~\ref{alg:unconstrained_sgd_gen}) and analysis based on \cite{LacosteJSB12}, using a ``warm-started'' step size schedule to ensure sufficient expected norm bounds on iterates.

\begin{algorithm2e}
	\caption{$\UCS(h_2, z, g_1)$}
	\label{alg:unconstrained_sgd_gen}
	\DontPrintSemicolon
		\codeInput $h_2: \R^d \to \R$, a $\Lambda$-strongly convex function, $z \in \R^d$, and $g_1: \R^d \to \R^d$, an unbiased estimator for $\nabla h_1$ where $h_1: \R^d \to \R$ is convex, and for all $x \in \R^d$, \eqref{eq:dist_var_bound} holds.
  \;
  $x_0 \gets \argmin_{x \in \R^d} h_2(x)$\;
  $T_0 \gets \frac{8L_2^2}{\Lam^2}$\;
  \For{$0 \le t < T$}{
  $\eta_t \gets \frac{2}{\Lambda(t + T_0)}$\;
  $x_{t + 1} \gets \arg\min_{x \in \R^d}\{\eta_t\inprod{g_1(x_t)}{x} + \eta_t h_2(x) + \half\norm{x - x_t}^2\}$\;
  }
  \codeReturn $\xavg \defeq \frac 1 T \sum_{0 \le t < T} x_t$
\end{algorithm2e}

\begin{lemma}\label{lem:gen_sgd_analysis}
Let $h_1: \R^d \to \R$, $h_2: \R^d \to \R$, $g_1: \R^d \to \R^d$, and $z \in \R^d$ satisfy the assumptions of Algorithm~\ref{alg:unconstrained_sgd_gen} for $L_2 \ge \Lam$, and let $x^\star$ minimize $h \defeq h_1 + h_2$. Following notation of Algorithm~\ref{alg:unconstrained_sgd_gen},
\begin{align*}\E\Brack{h(\xavg) } - h(x^\star) &\le \Par{\frac{2L_2^2}{\Lam T} + \frac{\Lam}{4T}}\norm{x_0 - x^\star}^2 
+ \frac{\log(T + T_0)}{\Lam T}\Par{L_1^2 + 2L_2^2\norm{z - x^\star}^2}.\end{align*}
\end{lemma}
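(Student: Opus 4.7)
The plan is to follow the template of strongly convex averaged stochastic proximal gradient (in the style of Lacoste-Julien--Schmidt--Bach), adapted to a composite objective $h = h_1 + h_2$ in which $h_2$ supplies the strong convexity and the stochastic gradient of $h_1$ has an iterate-dependent second moment given by \eqref{eq:dist_var_bound}. The warm start $T_0 = 8L_2^2/\Lam^2$ in the step size $\eta_t = 2/(\Lam(t+T_0))$ is chosen precisely so that the iterate-dependent part of this variance can be absorbed into the contraction coming from strong convexity of $h_2$.

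First I would derive a one-step inequality from the proximal step. The subproblem defining $x_{t+1}$ is $(1+\Lam\eta_t)$-strongly convex with minimizer $x_{t+1}$, so substituting the test point $y = x^\star$, splitting $\inprod{g_1(x_t)}{x_{t+1} - x^\star}$ as $\inprod{g_1(x_t)}{x_{t+1} - x_t} + \inprod{g_1(x_t)}{x_t - x^\star}$, using Young's inequality to cancel the first piece against the $\tfrac{1}{2}\norm{x_{t+1} - x_t}^2$ from the prox penalty, taking conditional expectation to replace $g_1(x_t)$ by $\nabla h_1(x_t)$, and invoking convexity of $h_1$, I obtain
\begin{equation*}
\E\Brack{h_1(x_t) - h_1(x^\star) + h_2(x_{t+1}) - h_2(x^\star)} \le \frac{1}{2\eta_t}D_t - \frac{1+\Lam\eta_t}{2\eta_t}D_{t+1} + \frac{\eta_t}{2}\E\norm{g_1(x_t)}^2,
\end{equation*}
where $D_t \defeq \E\norm{x_t - x^\star}^2$. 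The assumption \eqref{eq:dist_var_bound} combined with $\norm{x_t - z}^2 \le 2D_t + 2\norm{x^\star - z}^2$ gives $\E\norm{g_1(x_t)}^2 \le B + 2L_2^2 D_t$, where $B \defeq L_1^2 + 2L_2^2\norm{x^\star - z}^2$. The crucial calculation is that $\eta_t \le 2/(\Lam T_0) = \Lam/(4L_2^2)$, so $L_2^2\eta_t \le \Lam/4$; the iterate-dependent noise term $L_2^2\eta_t D_t$ therefore adds at most $\Lam/4$ to $D_t$'s coefficient, still dominated by the strong-convexity contraction $\Lam/2$ sitting on $D_{t+1}$.

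Plugging $\eta_t = 2/(\Lam(t+T_0))$ into the resulting inequality, the coefficient of $D_t$ becomes $\Lam(t+T_0+1)/4$ and the coefficient of $D_{t+1}$ becomes $\Lam(t+T_0+2)/4$; these consecutive-index coefficients telescope cleanly when summed over $t = 0,\ldots,T-1$, leaving only $\Lam(T_0+1)D_0/4 - \Lam(T+T_0+1)D_T/4$. Discarding the nonpositive $D_T$ term and bounding $\sum_{t=0}^{T-1}1/(t+T_0) \le \log(T+T_0)$ for the noise contribution, then dividing by $T$, gives an upper bound of $\Lam(T_0+1)/(4T) \cdot D_0 + B\log(T+T_0)/(\Lam T)$. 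Since $\Lam T_0/4 = 2L_2^2/\Lam$, the $D_0$ coefficient splits exactly into $2L_2^2/(\Lam T) + \Lam/(4T)$ as in the claim.

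The remaining subtlety, which I expect to be the main pitfall, is that the telescoped left-hand side is $\sum_{t=0}^{T-1}\E\Brack{h_1(x_t) + h_2(x_{t+1})}$, in which the indices on $h_1$ and $h_2$ are shifted by one, and I need to convert this into $T\cdot\E h(\xavg)$. The resolution is the algorithmic choice $x_0 = \argmin h_2$, which forces $h_2(x_T) \ge h_2(x_0)$, so $\sum_{t=1}^T h_2(x_t) \ge \sum_{t=0}^{T-1} h_2(x_t)$; the left side is therefore at least $\sum_{t=0}^{T-1}\E\Brack{h(x_t) - h(x^\star)}$, and Jensen's inequality on the convex $h$ gives $\E h(\xavg) \le \frac{1}{T}\sum_t \E h(x_t)$. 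Apart from this index realignment and the exact calibration of $T_0$ ensuring $L_2^2\eta_t \le \Lam/4$, the rest is routine bookkeeping.
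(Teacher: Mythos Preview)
Your proposal is correct and follows essentially the same argument as the paper. The only cosmetic differences are that the paper carries the term $\E[h_2(x_t) - h_2(x_{t+1})]$ explicitly as a telescoping sum (rather than realigning indices at the end as you do), and derives the one-step inequality via first-order optimality of $x_{t+1}$ plus strong convexity of $h_2$ (rather than directly via $(1+\Lam\eta_t)$-strong convexity of the prox subproblem); both routes yield the identical inequality and the same telescoping coefficients $\frac{\Lam(t+T_0+1)}{4}$, $\frac{\Lam(t+T_0+2)}{4}$.
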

\begin{proof}
Throughout the proof, for all $0 \le t < T$ let
\[D_t \defeq \E \Brack{\half \norm{x_t - x^\star}^2} \text{ and } \Phi_t \defeq \E\Brack{h(x_t) } - h(x^\star).\]
By first-order optimality of $x_{t + 1}$,
\[\inprod{g_1(x_t)}{x_{t + 1} - x^\star} + \inprod{\nabla h_2(x_{t + 1})}{x_{t + 1} - x^\star} \le \frac 1 {2\eta_t}\Par{\norm{x_t - x^\star}^2 - \norm{x_{t + 1} - x^\star}^2 - \norm{x_t - x_{t + 1}}^2}.\]
By rearranging and taking an expectation (conditioned on the realization of $x_t$),
\begin{align*}
h_1(x_t) + h_2(x_{t + 1}) - h(x^\star)  + \frac \Lam 2 \norm{x_{t + 1} - x^\star}^2 &\le \inprod{\nabla h_1(x_t)}{x_t - x^\star} + \inprod{\nabla h_2(x_{t + 1})}{x_{t + 1} - x^\star} \\
&\le \frac 1 {2\eta_t}\Par{\norm{x_t - x^\star}^2 - \E\norm{x_{t + 1} - x^\star}^2} \\
&+  \E\Brack{\inprod{g_1(x_t)}{x_t - x_{t + 1}} + \frac{1}{2\eta_t}\norm{x_t - x_{t + 1}}^2} \\
&\le \frac 1 {2\eta_t}\Par{\norm{x_t - x^\star}^2 - \E\norm{x_{t + 1} - x^\star}^2} \\
&+ \frac{\eta_t}{2}\E\Brack{\norm{g_1(x_t)}^2}.
\end{align*}
In the first inequality, we used convexity of $h_1$ and strong convexity of $h_2$, and in the last line, we applied the Cauchy-Schwarz and Young's inequalities to bound the quantity in the third line. Hence, iterating expectations, rearranging, and using the assumption \eqref{eq:dist_var_bound},
\begin{align*}\Phi_t + \Lam D_{t + 1} &\le \E\Brack{h_2(x_t) - h_2(x_{t + 1})} + \frac 1 {\eta_t}\Par{D_t - D_{t + 1}} + \frac{\eta_t}{2}\Par{L_1^2 + L_2^2\E\Brack{\norm{x_t - z}^2}} \\
&\le \E\Brack{h_2(x_t) - h_2(x_{t + 1})} + \frac 1 {\eta_t}\Par{D_t - D_{t + 1}} + \frac{\eta_t}{2}\Par{L_1^2 + 2L_2^2\norm{z - x^\star}^2 + 4L_2^2 D_t}. 
\end{align*}
Moreover, note that for the given choice of parameters, i.e., using $T_0 = \frac{8L_2^2}{\Lam^2}$,
\begin{align*}
\frac 1 {\eta_t} + 2\eta_t L_2^2 &\le \frac{\Lam(t + T_0)}{2} + \frac{4L_2^2}{\Lam T_0} = \frac{\Lam(t + T_0 + 1)}{2}, \\
\frac 1 {\eta_t} + \Lam &= \frac{\Lam(t + T_0)}{2} + \Lam = \frac{\Lam (t + T_0 + 2)}{2}.
\end{align*}
Combining the above two displays, we have
\begin{align*}\Phi_t &\le \E\Brack{h_2(x_t) - h_2(x_{t + 1})} + \frac{\Lam(t + T_0 + 1)}{2} D_t - \frac{\Lam(t + T_0 + 2)}{2} D_{t + 1} \\
&+ \frac{\eta_t}{2}\Par{L_1^2 + 2L_2^2\norm{z - x^\star}^2}.\end{align*}
Therefore, by telescoping over $T$ iterations, and using minimality of $x_0$ with respect to $h_2$,
\[\frac 1 T \sum_{0 \le t < T} \Phi_t \le \frac{\Lam (T_0 + 1)}{4T} \norm{x_0 - x^\star}^2 + \Par{\frac 1 T \sum_{0 \le t < T} \frac{\eta_t} 2}\Par{L_1^2 + 2L_2^2\norm{z - x^\star}^2}. \]
Applying convexity of $h$ and plugging in our parameter choices yields the claim, where we bound the partial harmonic sequence $\sum_{0 \le t < T} \frac 1 {t + T_0} \le \sum_{t = 8}^{T + T_0} \frac 1 t \le \log(t + T_0)$.
\end{proof}

We now state Algorithm~\ref{alg:unconstrained_sgd}, our specialization of Algorithm~\ref{alg:unconstrained_sgd_gen} to the problem \eqref{eq:unconstrained}, \eqref{eq:specific_gaussian}. In Line~\ref{line:sgd_update} of Algorithm~\ref{alg:unconstrained_sgd}, we used the definition of $h_2$ from \eqref{eq:hdef}. We observe that the update in Line~\ref{line:sgd_update} can be conveniently written in closed form as
\begin{equation}\label{eq:sgd_explicit}
	x_{t + 1} \gets \frac{1}{1 + \eta_t\Lam}\Par{x_t - \eta_t v - \eta_t g'_t - \frac{2\eta_t}{\rho^2}\inprod{\xi_t}{x_t - z}g_t}.
\end{equation}
We demonstrate in Section~\ref{ssec:maintenance} how to support efficient parallel maintenance of weighted averages of iterates undergoing updates of the form \eqref{eq:sgd_explicit}. For now, we give an error bound following Lemma~\ref{lem:gen_sgd_analysis}.

\begin{algorithm2e}
	\caption{$\UCSS(\Lam, \rho, z, v, T, f)$}
	\label{alg:unconstrained_sgd}
	\DontPrintSemicolon
		\codeInput $\Lam, \rho \ge 0$, $z, v \in \R^d$, $T \in \N$, $f$ in the setting of Problem~\ref{prob:sco} \;
  $x_0 \gets -\frac{1}{2\Lam}v$\;
  $T_0 \gets \frac{64L^2}{\rho^2\Lam^2}$\;
  \For{$0 \le t < T$}{
  $\xi_t \sim \Nor(\0_d, \rho^2 \id_d)$\;
  $g_t \gets g(\xi_t)$, $g'_t \gets g(z - \xi_t)$\;
  }
		\For{$0 \le t < T$}{
            $\eta_t \gets \frac{2}{\Lam(t  + T_0)}$\;
		$x_{t + 1} \gets \arg\min_{x \in \R^d}\{\eta_t\inprods{ g'_t + \frac 2 {\rho^2}  \inprod{\xi_t}{x_t - z} g_t }{x} + \eta_t h_2(x) + \half \norm{x - x_t}^2\}$ \;\label{line:sgd_update}
		}
    \codeReturn $\xavg \defeq \frac 1 T \sum_{0 \le t < T} x_t$
\end{algorithm2e}

\begin{corollary}\label{cor:specific_sgd_analysis}
Following the notation in \eqref{eq:hdef} and Algorithm~\ref{alg:unconstrained_sgd}, let $\xslzv$ minimize \eqref{eq:unconstrained} under the setting \eqref{eq:specific_gaussian}, and suppose $\max(\norm{z}, \norm{x_0}) \le \rho$ and $\rho \le \frac L \Lam$. Then,
\begin{align*}\E h(\xavg) - h(\xslzv) &\le \Par{\frac{66L^2\log(T + T_0)}{\Lam T} + \frac{\Lam \rho^2}{2T}}\Par{1 + \frac{ \norm{\xslzv}^2}{\rho^2}} .\end{align*}
\end{corollary}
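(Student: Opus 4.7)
\textbf{Proof plan for Corollary~\ref{cor:specific_sgd_analysis}.} The plan is to obtain the bound by directly invoking Lemma~\ref{lem:gen_sgd_analysis} with the specific estimator used in Algorithm~\ref{alg:unconstrained_sgd}, and then to rewrite the resulting distance terms using the norm hypotheses of the corollary. First, I would verify the hypotheses of Lemma~\ref{lem:gen_sgd_analysis} in this setting. Under the identification \eqref{eq:specific_gaussian}, Fact~\ref{fact:gconv} shows that $g_1(x) = g(z - \xi) + \tfrac{2}{\rho^2}\inprod{\xi}{x - z}g(\xi)$ with $\xi \sim \Nor(\0_d, \rho^2 \id_d)$ is an unbiased estimator of $\nabla h_1(x) = \nabla f_\rho(z) + 2\nabla^2 f_\rho(\0_d)(x-z)$. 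Lemma~\ref{lem:second_moment} then yields the variance bound \eqref{eq:dist_var_bound} with $L_1^2 = 2L^2$ and $L_2^2 = 8L^2/\rho^2$; correspondingly, $T_0 = 8L_2^2/\Lam^2 = 64L^2/(\rho^2\Lam^2)$, matching the algorithm. The condition $L_2 \ge \Lam$ required by the lemma reduces to $2\sqrt{2} L/\rho \ge \Lam$, which is ensured by the hypothesis $\rho \le L/\Lam$.

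Substituting these $L_1, L_2$ into the conclusion of Lemma~\ref{lem:gen_sgd_analysis} gives
\[
\E h(\xavg) - h(\xslzv) \le \left(\frac{16L^2}{\rho^2\Lam T} + \frac{\Lam}{4T}\right)\norm{x_0 - \xslzv}^2 + \frac{\log(T + T_0)}{\Lam T}\left(2L^2 + \frac{16L^2}{\rho^2}\norm{z - \xslzv}^2\right).
\]
Next, I would apply the triangle inequality together with the hypothesis $\max(\norm{z}, \norm{x_0}) \le \rho$ to bound both $\norm{x_0 - \xslzv}^2$ and $\norm{z - \xslzv}^2$ by $2\rho^2 + 2\norm{\xslzv}^2 = 2\rho^2(1 + \norm{\xslzv}^2/\rho^2)$. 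After this substitution, every term in the bound factors as a constant times $1 + \norm{\xslzv}^2/\rho^2$, and it only remains to regroup constants.

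The final bookkeeping step uses $\rho\Lam \le L$ twice. First, it implies $\Lam/2 \le L^2/(2\Lam\rho^2)$, so the stray $\Lam/(4T)\cdot \norm{x_0-\xslzv}^2$ contribution is absorbed into an $O(L^2/(\Lam T))$ term of the same shape. Second, it gives $T_0 \ge 64$, so $\log(T + T_0) \ge \log 64 > 4$, which lets me fold all of the small non-logarithmic $L^2/(\Lam T)$ contributions into a single $c L^2 \log(T+T_0)/(\Lam T)$ term. Tracking the explicit coefficients ($32$ from the $\norm{x_0-\xslzv}^2$ bound and $2 + 32 = 34$ from the $\norm{z-\xslzv}^2$ bound, plus the absorbed $\Lam/(4T)$ piece) yields a total coefficient bounded by $66$. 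The $\Lam\rho^2/(2T)$ term survives unchanged from the first substitution.

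I do not expect a genuine obstacle here; the entire argument is a change of variables plus careful constant tracking. The one place that requires a little care is ensuring the additive $\Lam/(4T) \cdot \norm{x_0 - \xslzv}^2$ term from Lemma~\ref{lem:gen_sgd_analysis} is not dropped but rather dominated by the logarithmic term, which is precisely what $T_0 \ge 64$ and $\rho\Lam \le L$ enable.
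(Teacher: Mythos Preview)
Your proposal is correct and follows exactly the paper's approach: invoke Lemma~\ref{lem:gen_sgd_analysis} with $L_1^2 = 2L^2$ and $L_2^2 = 8L^2/\rho^2$ supplied by Lemma~\ref{lem:second_moment}, then simplify the distance terms via $\max(\norm{z},\norm{x_0}) \le \rho$. Your constant tracking is in fact more explicit than the paper's one-line ``simplify''; the only expository wrinkle is that the $\Lam\rho^2/(2T)$ term in the target bound is precisely what $\tfrac{\Lam}{4T}\norm{x_0 - \xslzv}^2 \le \tfrac{\Lam}{4T}\cdot 2\rho^2(1+\norm{\xslzv}^2/\rho^2)$ becomes after substitution, so there is no need to separately absorb it into the $L^2/(\Lam T)$ coefficient (and then $32+34=66$ directly, using only $\log(T+T_0)\ge 1$).
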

\begin{proof}
In light of Lemma~\ref{lem:second_moment} and the fact that the gradient estimator in \eqref{eq:grad_unbiased} is unbiased for $\nabla h_1$ defined in \eqref{eq:hdef}, we can apply Lemma~\ref{lem:gen_sgd_analysis} with $h_1$, $h_2$ as in \eqref{eq:hdef}, and
$L_1^2 \defeq 2L^2$ and $L_2 \defeq 8L^2 / \rho^2$. The conclusion follows from Lemma~\ref{lem:second_moment} once we simplify using $\max(\norm{x_0}, \norm{z}) \le \rho$.
\end{proof}

To gain some intuition for Corollary~\ref{cor:specific_sgd_analysis}, note that it shows if $\norms{\xslzv} \ll \rho$ and our target error is $\Theta(\Lam r^2)$, we recover the standard $\frac{L^2}{\Lam T}$ rate of strongly convex optimization under a bounded-variance gradient oracle, up to a log factor. We show how to combine this guarantee with a binary search in Section~\ref{sec:framework} to efficiently solve constrained optimization problems, as required by Proposition~\ref{prop:ball_accel}.

\subsection{Parallel maintenance of rank-$1$ updates}\label{ssec:maintenance}

In this section, we give our parallel solution to Problem~\ref{prob:rankone_maintain_all}, reproduced here for convenience.

\restaterankonemaintainall*

Our updates in Algorithm~\ref{alg:unconstrained_sgd}, as stated in \eqref{eq:sgd_explicit}, are exactly of the form in Problem~\ref{prob:rankone_maintain_all}, with 
\begin{equation}\label{eq:problem_example}
\begin{aligned}
c_t \gets \frac 1 {1 + \eta_{t - 1}\Lam},\; u_t \gets \frac{2\eta_{t - 1}}{\rho^2} g_{t - 1},\; v_t \gets \xi_{t - 1},\\
w_t \gets -c_t\Par{\eta_{t - 1}(v + g'_{t - 1}) + \frac{2\eta_{t - 1}}{\rho^2}\inprod{\xi_{t - 1}}{z}g_{t - 1}}. 
\end{aligned}
\end{equation}
Moreover, all of the $\{c_t, u_t, v_t, w_t\}_{t \in [T]}$ can be queried and precomputed with $\tO(1)$ depth and $O(dT)$ work.
Accordingly, it suffices to solve Problem~\ref{prob:rankone_maintain_all} to give a parallel implementation.
As a warmup to our overall solution, we first give our parallel solution to the following simpler Problem~\ref{prob:rankone_maintain}.

\begin{problem}\label{prob:rankone_maintain}
In the setting of Problem~\ref{prob:rankone_maintain_all}, we wish to compute $x_T$.
\end{problem}

We then modify our strategy to solve Problem~\ref{prob:rankone_maintain_all}, at a slightly larger parallel depth. Our solution to Problem~\ref{prob:rankone_maintain} follows straightforwardly from maintaining matrix products in a dyadic fashion, using the following observation (Lemma~\ref{lem:lowrank}) on maintaining low-rank updates of the identity. We combine this with a variant of a parallel prefix sum maintenance strategy for recursive matrix-vector products.

\begin{lemma}\label{lem:lowrank}
Let $\ma_0, \mb_0, \ma_1, \mb_1 \in \R^{d \times r}$ for $r \in [d]$. In depth  $O(\log d)$ and work $O(dr^{\omega - 1})$, we can compute $\ma, \mb \in \R^{d \times 2r}$ such that $\id_d - \ma\mb^\top = (\id_d - \ma_0 \mb_0^\top)(\id_d - \ma_1\mb_1^\top)$.
\end{lemma}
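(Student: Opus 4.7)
The plan is to give an explicit construction of $\ma$ and $\mb$ by expanding the product algebraically, then analyze the cost using block-wise fast matrix multiplication.

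First, I would expand
\[
(\id_d - \ma_0 \mb_0^\top)(\id_d - \ma_1 \mb_1^\top) = \id_d - \ma_0 \mb_0^\top - \ma_1 \mb_1^\top + \ma_0 (\mb_0^\top \ma_1) \mb_1^\top.
\]
The key observation is that the cross term is mediated by the small matrix $\mc \defeq \mb_0^\top \ma_1 \in \R^{r \times r}$, since all the $d$-dependence has been absorbed. Grouping terms yields
\[
(\id_d - \ma_0 \mb_0^\top)(\id_d - \ma_1 \mb_1^\top) = \id_d - \ma_0 \bigl(\mb_0 - \mb_1 \mc^\top\bigr)^\top - \ma_1 \mb_1^\top,
\]
so taking
\[
\ma \defeq \begin{bmatrix} \ma_0 & \ma_1 \end{bmatrix} \in \R^{d \times 2r}, \qquad \mb \defeq \begin{bmatrix} \mb_0 - \mb_1 \mc^\top & \mb_1 \end{bmatrix} \in \R^{d \times 2r}
\]
gives the desired identity, as a direct check confirms.

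Next, I would account for the computational cost. Forming $\ma$ requires only concatenation (free). Forming $\mb$ requires first computing $\mc = \mb_0^\top \ma_1$, then $\mb_1 \mc^\top$, then a subtraction. To compute $\mc$, I would partition the ``long'' dimension $d$ into $\lceil d/r \rceil$ blocks of size $r$ and independently multiply the corresponding $r \times r$ sub-blocks of $\mb_0^\top$ and $\ma_1$ using fast matrix multiplication, then sum the resulting $r \times r$ matrices with a balanced binary tree; this takes $O((d/r) \cdot r^\omega) = O(d r^{\omega - 1})$ work and $O(\log d)$ depth (each $r \times r$ multiplication can be done in $O(\log r)$ depth by the reduction cited in the parallel computation model paragraph). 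The product $\mb_1 \mc^\top$ partitions $\mb_1$ into $\lceil d/r \rceil$ blocks of size $r \times r$ and multiplies each by $\mc^\top$; this again costs $O(d r^{\omega - 1})$ work and $O(\log d)$ depth. The final subtraction $\mb_0 - \mb_1 \mc^\top$ is entrywise, using $O(dr)$ work and $O(1)$ depth.

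Summing these contributions (and noting $dr \le d r^{\omega - 1}$ since $\omega \ge 2$) yields the claimed $O(d r^{\omega - 1})$ work and $O(\log d)$ depth. I do not expect any real obstacle here: the algebraic step is a routine manipulation, and the only subtle point is recognizing that the cost is dominated by a single $r \times d$ by $d \times r$ multiplication, whose complexity is standard to compute via the blocking argument above.
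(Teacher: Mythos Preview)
Your proof is correct and essentially the same as the paper's: both give an explicit rank-$2r$ factorization by isolating the $r\times r$ cross-term $\mb_0^\top \ma_1$, with the only difference being that the paper absorbs the correction into $\ma$ (setting $\ma = [\,\ma_0 \;\; \ma_1 - \ma_0\mb_0^\top\ma_1\,]$, $\mb = [\,\mb_0 \;\; \mb_1\,]$) whereas you absorb it into $\mb$. Your cost analysis is more explicit than the paper's, which simply asserts that the bottleneck $\ma_0\mb_0^\top\ma_1$ takes $O(dr^{\omega-1})$ work and $O(\log d)$ depth.
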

\begin{proof}
Note that it suffices to choose
\begin{align*}
\ma = \begin{pmatrix}
 \ma_0 & \ma_1 - \ma_0\mb_0^\top\ma_1
\end{pmatrix}\text{ and } \;
\mb = \begin{pmatrix}
\mb_0 & \mb_1
\end{pmatrix}.
\end{align*}
The bottleneck is evaluating $\ma_0 \mb_0^\top \ma_1$ which takes work $O(dr^{\omega - 1})$ and depth $O(\log d)$.
\end{proof}

Building upon Lemma~\ref{lem:lowrank}, we next give a solution to Problem~\ref{prob:rankone_maintain} when all $c_t = 1$.

\begin{lemma}\label{lem:noC_maintain}
If $c_t = 1$ for all $t \in [T]$, we can solve Problem~\ref{prob:rankone_maintain} with depth $O(\log(d)\log (T))$ and work $O(dT^{\omega - 1})$.
\end{lemma}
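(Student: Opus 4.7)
The plan is to exploit the associativity of the affine updates. Writing $A_t \defeq \id_d - u_t v_t^\top$, the recurrence (with $c_t = 1$) gives $x_T = A_T \cdots A_1 x_0 + \sum_{s=1}^T A_T \cdots A_{s+1} w_s$. For any interval $[a,b] \subseteq [T]$, encode the ``effect'' of applying updates $a$ through $b$ by the pair $(M_{[a,b]}, z_{[a,b]})$ where $M_{[a,b]} \defeq A_b \cdots A_a$ and $z_{[a,b]} \defeq \sum_{s=a}^{b} A_b \cdots A_{s+1} w_s$; the target is then $x_T = M_{[1,T]} x_0 + z_{[1,T]}$. These pairs obey the associative composition rule $(M_L, z_L) \cdot (M_R, z_R) = (M_R M_L,\, M_R z_L + z_R)$ for intervals $L$ followed by $R$, and this is the key structural observation that makes a divide-and-conquer approach work.

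First I would build a balanced dyadic segment tree over $[1,T]$ (padding with identity updates and zero affine shifts if $T$ is not a power of two). Each leaf $t$ holds the rank-$1$ factored form $(u_t, v_t)$ together with the vector $w_t$. Each internal node is formed from its two children by combining the low-rank-plus-identity factors via Lemma~\ref{lem:lowrank}, which doubles the rank, and by setting the affine component to $M_R z_L + z_R$, which is one application of the right child's low-rank-plus-identity matrix to the left child's vector, followed by a vector addition. Building the tree bottom up, the pair stored at the root is $(M_{[1,T]}, z_{[1,T]})$, after which one rank-$T$-plus-identity matrix-vector product and one addition deliver $x_T$.

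For the complexity analysis, note that at level $k$ (leaves at level $0$), each of the $T/2^k$ nodes stores factors of rank at most $2^k$. Combining two such nodes of rank $2^{k-1}$ via Lemma~\ref{lem:lowrank} costs $O\bigl(d \cdot 2^{k(\omega - 1)}\bigr)$ work at depth $O(\log d)$; the accompanying matrix-vector step on the affine part costs only $O(d \cdot 2^k)$ additional work, which is dominated. Summing over the $\log_2 T$ levels,
\[
\sum_{k=1}^{\log_2 T} \frac{T}{2^k} \cdot O\!\left(d \cdot 2^{k(\omega - 1)}\right)
= O\!\left(d T \sum_{k=1}^{\log_2 T} 2^{k(\omega - 2)}\right)
= O\!\left(d T^{\omega - 1}\right),
\]
where $\omega \ge 2$ makes the geometric series bounded (up to a constant) by its top term. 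The depth accumulates as $O(\log d)$ per level across $O(\log T)$ levels, for a total of $O(\log T \cdot \log d)$. The final root matrix-vector application contributes $O(dT) \subseteq O(d T^{\omega - 1})$ work and $O(\log d)$ depth.

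The main obstacle, such as it is, lies in ensuring the affine parts combine without blowing up either the work or the depth: a naive sequential accumulation of the $w_s$ contributions would be $\Omega(T)$ depth. The semigroup formulation above sidesteps this cleanly because the combination of $(M_L, z_L)$ and $(M_R, z_R)$ adds only a single matrix-vector product against a low-rank-plus-identity representation, and this cost is always dominated by the cost of combining the low-rank factors at the same level. The remainder is pure bookkeeping of Lemma~\ref{lem:lowrank} along the tree.
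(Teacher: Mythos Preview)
Your proposal is correct and follows essentially the same approach as the paper: dyadic aggregation of the rank-one updates using Lemma~\ref{lem:lowrank}, combined with a parallel prefix-style recursion for the affine shifts. The paper separates this into two passes (first build all dyadic products $\mm_{(i,j)}$, then recurse on $x_{(i,j)}$), whereas you package matrix and shift together as a semigroup element $(M,z)$ and combine in a single pass; this is a presentational difference only, and your framing is arguably cleaner.
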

\begin{proof}
Throughout this proof let $\ell \defeq \lfloor \log_2 T\rfloor + 1$, define $w_0 \defeq x_0$, and for $t > T$ let $u_t = v_t = w_t \defeq \0_d$. We also define for $s \le t$, where all matrix products are evaluated right-to-left,
\[\mm_{t:s} \defeq \prod_{r = s}^{t} \Par{\id_d - u_r v_r^\top},\]
so that $\mm_{T:1} = (\id_d - u_Tv_T^\top)\ldots(\id_d - u_1v_1^\top)$. As all iterates after $T$ do not change, we observe that
\begin{equation}\label{eq:xtdef}x_T = x_{2^\ell} = \sum_{t = 0}^{2^\ell - 1} \mm_{2^\ell:t+1} w_t,\end{equation}
since by definition, $w_{2^\ell} = \0_d$.
For notational convenience, we define
\begin{equation}\label{eq:mmijdef}\mm_{(i, j)} \defeq \mm_{2^i j:2^i(j-1) + 1}\end{equation}
for each $0 \le i \le \ell$ and $j \in [2^{\ell - i}]$. We observe that with $O(dT^{\omega - 1})$ work, we can explicitly compute $\{\ma_{(i, j)}, \mb_{(i, j)}\}_{0 \le i \le \ell, j \in [2^{\ell - i}]}$ such that
\begin{align*}
\mm_{(i, j)} = \id_d - \ma_{(i,j)}\mb_{(i, j)}^\top,
\text{ for } \ma_{(i, j)}, \mb_{(i, j)} \in \R^{d \times 2^i}.
\end{align*}
To see this, we clearly can compute all $\{\ma_{(0,j)},\mb_{(0, j)}\}_{j \in [2^\ell]}$ with $O(dT)$ work and $O(1)$ depth. Further, for $0 \le i < \ell$, assuming access to all $\{\ma_{(i,j)},\mb_{(i, j)}\}_{j \in [2^{\ell - i}]}$, we can apply Lemma~\ref{lem:lowrank} in parallel to compute each required $\ma_{(i + 1, j)}, \mb_{(i + 1, j)}$ with work $O(d(2^i)^{\omega - 1})$, incurring a total work of
\[2^{\ell - i} \cdot O\Par{d(2^i)^{\omega - 1}} = O(dT) \cdot (2^i)^{\omega - 2}.\]
Summing over all $0 \le i \le \ell$ yields a geometric sequence with dominant term $O(dT^{\omega - 1})$ as desired. This procedure can be implemented in depth $O(\log(T)\log(d))$ by repeatedly applying Lemma~\ref{lem:lowrank}.
Next, for each $0 \le i \le \ell$ and $j \in [2^{\ell - i}]$, define
\begin{equation}\label{eq:xijdef}
x_{(i, j)} \defeq \sum_{k \in [2^i]} \mm_{2^i j: 2^i (j - 1) + k} w_{2^i(j - 1) - 1 + k},
\end{equation}
such that by inspection, the following recursion holds for $i \in [\ell]$:
\begin{equation}\label{eq:xrecurse}
x_{(i, j)} = \mm_{2^i j : 2^i j - 2^{i - 1} + 1} x_{(i-1,2j-1)} + x_{(i-1, 2j)}.
\end{equation}
For example,
\begin{align*}
x_{(3, 1)} &= \mm_{8:1} w_0 + \mm_{8:2} w_1 + \mm_{8:3} w_2 + \mm_{8:4} w_3 + \mm_{8:5} w_4 + \mm_{8:6} w_5 + \mm_{8:7} w_6 + \mm_{8:8} w_7 \\
&= \mm_{8:5}\underbrace{\Par{\mm_{4:1} w_0 + \mm_{4:2} w_1 + \mm_{4:3} w_2 + \mm_{4:4} w_3 }}_{x_{(2, 1)}} + \underbrace{\mm_{8:5} w_4 + \mm_{8:6} w_5 + \mm_{8:7} w_6 + \mm_{8:8} w_7}_{x_{(2, 2)}}.
\end{align*}
Our goal is simply to compute $x_{(\ell, 1)} = x_T$, where we recall \eqref{eq:xtdef}. First, we clearly can compute all $x_{(0, j)}$ for $j \in [2^\ell]$ with $O(dT)$ work. Further, for $0 \le i < \ell$, assuming access to all $x_{(i, j)}$ for $j \in [2^{\ell - i}]$ and all $\{\ma_{(i, j)}, \mb_{(i,j)}\}_{j \in [2^{\ell - i}]}$, we claim we can compute all $x_{(i + 1, j)}$ for $j \in [2^{\ell + 1 - i}]$ in parallel incurring a total work of $O(dT)$. To see this, to compute each $x_{(i + 1, j)}$ via the recursion \eqref{eq:xrecurse}, we require one vector addition, and one matrix-vector multiplication through 
\[\mm_{2^i j:2^i j - 2^{i - 1} + 1} = \id_d - \ma_{(i - 1, 2j)}\mb_{(i - 1, 2j)}^\top\] which can be performed with $O(d2^i)$ work. Therefore, the total work required to compute all $x_{(i + 1, j)}$ is bounded by $O(d2^\ell) = O(dT)$. Finally, summing over all $0 \le i \le \ell$ the total work of these computations is bounded by $O(dT\log T)$ which does not asymptotically dominate the work. The depth of this recursive computation is again bounded by $O(\log(T)\log(d))$.
\end{proof}

We conclude with a simple extension of Lemma~\ref{lem:noC_maintain} to general $\{c_t\}_{t \in [T]}$, giving our full solution.

\begin{corollary}\label{cor:prob2solve}
We can solve Problem~\ref{prob:rankone_maintain} with depth $O(\log(T)\log(d))$ and work $O(dT^{\omega - 1})$.
\end{corollary}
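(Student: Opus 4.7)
The plan is to reduce to the already-solved case $c_t = 1$ via a simple rescaling. Define the cumulative products $C_t \defeq \prod_{s = 1}^t c_s$ for $t \in [T]$, with $C_0 \defeq 1$. Let $y_t \defeq x_t / C_t$ and $\tilde w_t \defeq w_t / C_t$. Substituting into the recurrence $x_t = c_t((\id_d - u_t v_t^\top) x_{t-1}) + w_t$ and dividing through by $C_t = c_t C_{t-1}$ yields
\[
y_t = (\id_d - u_t v_t^\top) y_{t-1} + \tilde w_t,
\]
with $y_0 = x_0$. This is exactly the setting of Lemma~\ref{lem:noC_maintain}, and $x_T = C_T y_T$ recovers the quantity of interest.

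To execute the reduction in parallel, I would first compute all cumulative products $\{C_t\}_{t \in [T]}$ using a standard parallel prefix-product procedure, incurring $O(\log T)$ depth and $O(T)$ work. Then I would form the rescaled inputs $\{\tilde w_t\}_{t \in [T]}$ by dividing each $w_t$ by the scalar $C_t$; since each $w_t \in \R^d$, this takes $O(dT)$ work and $O(1)$ depth total when done in parallel across $t$. Applying Lemma~\ref{lem:noC_maintain} to the instance $(x_0, \{u_t, v_t, \tilde w_t\}_{t \in [T]})$ computes $y_T$ in depth $O(\log(T)\log(d))$ and work $O(dT^{\omega - 1})$. Finally, rescaling by the scalar $C_T$ takes $O(d)$ work and $O(1)$ depth to produce $x_T$.

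The only step requiring any care is verifying that the parallel prefix-product and rescaling overheads do not dominate the bounds from Lemma~\ref{lem:noC_maintain}. The prefix-product contributes $O(\log T) \leq O(\log(T)\log(d))$ depth and $O(T) \leq O(dT^{\omega - 1})$ work (since $d \geq 1$ and $\omega > 1$), and the vector rescalings contribute $O(1)$ depth and $O(dT)$ work, which for $\omega \geq 2$ is absorbed into $O(dT^{\omega - 1})$. I do not anticipate any genuine obstacle here; the work of the reduction is asymptotically dominated by the call to Lemma~\ref{lem:noC_maintain}, yielding the claimed $O(\log(T)\log(d))$ depth and $O(dT^{\omega - 1})$ work bounds.
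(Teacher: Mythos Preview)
Your approach is essentially identical to the paper's: rescale by the cumulative products $C_t$ to reduce to the $c_t = 1$ case of Lemma~\ref{lem:noC_maintain}, then multiply back by $C_T$. The complexity accounting is also fine.

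There is one genuine gap, however: your rescaling $y_t \defeq x_t / C_t$ and $\tilde w_t \defeq w_t / C_t$ is only well-defined when every $c_t \neq 0$, but Problem~\ref{prob:rankone_maintain_all} allows $\{c_t\}_{t \in [T]} \subset \R$ with no nonzero assumption. If some $c_{t_0} = 0$, then $C_t = 0$ for all $t \ge t_0$ and your division is undefined. The paper handles this explicitly by observing that whenever $c_{t_0} = 0$ the recurrence restarts at $x_{t_0} = w_{t_0}$ independently of $x_{t_0 - 1}$, so one can simply begin the recursion from the index just after the last vanishing $c_t$ (finding that index costs $O(T)$ work and $O(\log T)$ depth). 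Adding this one-line case split makes your argument complete.
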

\begin{proof}
First, we may assume all $\{c_t\}_{t \in [T]}$ are nonzero, else we can begin the recursion in Problem~\ref{prob:rankone_maintain} starting from the index right after the last zero value. 
Under this assumption, by writing $C_t \defeq \prod_{s\in[t]}c_s$ and $x_t = C_t y_t$ for all $t \ge 0$, we have the equivalent recurrence 
\begin{align*}
y_t = C_t^{-1}\Par{c_t\Par{\id_d - u_tv_t^\top}C_{t - 1}y_{t - 1} + w_t} = \Par{\id_d - u_tv_t^\top}y_{t - 1} + C_t^{-1}w_t.
\end{align*}
Further, computing all $\{C_t\}_{t \in [T]}$ in the same dyadic fashion used to compute the $\mm_{(i, j)}$ in Lemma~\ref{lem:noC_maintain} can be performed in $O(\log T)$ depth and $O(T \log T)$ work.
Hence it suffices to apply Lemma~\ref{lem:noC_maintain} to an instance of Problem~\ref{prob:rankone_maintain} with all $c_t = 1$ and inputs 
\[\{x_0, \{u_t, v_t, C_t^{-1}w_t\}_{t \in [T]}\}\] and multiply the final output vector (corresponding to $y_T$) by $C_T$. The scalings $\{C_t^{-1}w_t\}_{t \in [T]}$ can be performed using $O(1)$ depth and $O(dT)$ work by computing each in parallel.
\end{proof}

We now show how modifying the strategy for Problem~\ref{prob:rankone_maintain} also yields an efficient parallel solution for the generalization in Problem~\ref{prob:rankone_maintain_all}. As before, we first handle the case where all $c_t = 1$. 

\begin{lemma}\label{lem:noC_maintain_all}
If $c_t = 1$ for all $t \in [T]$, we can solve Problem~\ref{prob:rankone_maintain_all} with depth $O(\log^2 (T)\log(d))$ and work $O(dT^{\omega - 1})$.
\end{lemma}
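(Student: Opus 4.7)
The plan is to augment the dyadic scheme from Lemma~\ref{lem:noC_maintain}, reusing its precomputed low-rank factorizations $\mm_{(i,j)} = \id_d - \ma_{(i,j)}\mb_{(i,j)}^\top$ (with $\ma_{(i,j)}, \mb_{(i,j)} \in \R^{d \times 2^i}$) and its block-aggregate vectors $x_{(i,j)}$ from \eqref{eq:xijdef}, so as to recover every intermediate iterate rather than only $x_T$. The approach is to compute the ``checkpoint'' states $X_{(i,j)} \defeq x_{2^i j}$ for every $0 \le i \le \ell$ and $j \in \{0\} \cup [2^{\ell - i}]$ by sweeping top-down through the tree; at the leaves this yields $X_{(0,j)} = x_j$ for every $j \in [T]$, which covers all outputs required by Problem~\ref{prob:rankone_maintain_all}.

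The key recurrence driving the top-down sweep is that $X_{(i-1, 2j)} = X_{(i,j)}$ (since $2^{i-1}(2j) = 2^i j$), while
\[
X_{(i-1, 2j-1)} = \mm_{(i-1, 2j-1)} X_{(i, j-1)} + x_{(i-1, 2j-1)}.
\]
The second identity follows by linearity of the recurrence $x_t = (\id_d - u_t v_t^\top) x_{t-1} + w_t$: advancing through the first half-block $[2^i(j-1)+1,\, 2^i(j-1) + 2^{i-1}]$ from the true starting state $x_{2^i(j-1)} = X_{(i, j-1)}$ decomposes into the block multiplier applied to the starting state plus the zero-initial-state partial sum, which by definition \eqref{eq:xijdef} equals $x_{(i-1, 2j-1)}$. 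With the base cases $X_{(\ell, 0)} = x_0$ and $X_{(\ell, 1)} = x_T$ (the latter supplied by Lemma~\ref{lem:noC_maintain}), the plan is to execute this recurrence in $\ell$ sequential outer rounds, computing all odd-index children at the next finer level in parallel within each round.

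For the cost analysis, each odd-index update at level $i - 1$ reads the already-materialized $(\ma_{(i-1, 2j-1)}, \mb_{(i-1, 2j-1)})$ and performs a single matrix-vector multiplication against $\id_d - \ma_{(i-1, 2j-1)}\mb_{(i-1, 2j-1)}^\top$, costing $O(d \cdot 2^{i-1})$ work and $O(\log d)$ depth under the assumed parallel model. Summing $O(2^{\ell - i})$ parallel nodes per level gives $O(dT)$ work at each of the $O(\log T)$ levels, so the total top-down work is $O(dT \log T)$, which is absorbed by the $O(dT^{\omega - 1})$ precomputation from Lemma~\ref{lem:noC_maintain}. The depth at each top-down level is $O(\log d)$ and the $\log T$ levels are sequentially dependent, yielding an $O(\log(T)\log(d))$ contribution on top of the $O(\log(T)\log(d))$ tree-construction depth from Lemma~\ref{lem:noC_maintain}, comfortably inside the claimed $O(\log^2(T)\log(d))$ budget.

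The main item to nail down carefully is indexing and boundary handling --- in particular, verifying that the zero-initial-state partial sum appearing in the odd-child update matches exactly the definition \eqref{eq:xijdef}, and that the $j = 1$ case at every level cleanly consumes the base value $X_{(i, 0)} = x_0$ without special treatment. I do not foresee a deeper algorithmic obstacle: the precomputation of Lemma~\ref{lem:noC_maintain} is a bottom-up pass that is independent of the top-down sweep, so the two phases compose directly, and nothing in the sweep introduces new matrix-matrix products or concentration requirements beyond those already handled in the proof of Lemma~\ref{lem:noC_maintain}.
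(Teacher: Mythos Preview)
Your proposal is correct in spirit and takes a genuinely different route from the paper. The paper argues via a recursive divide-and-conquer: it splits the index range in half, observes that each half is an independent instance of the same problem, and then ``stitches'' the second half onto the first by propagating a single vector $u^\star$ through a dyadic product cascade; unrolling the recursion $\mathcal{T}(\ell) = 2\mathcal{T}(\ell-1) + O(dT\log T)$ gives $O(dT\log^2 T)$ work and $O(\log^2(T)\log(d))$ depth. Your two-pass scheme (bottom-up precompute of all $\mm_{(i,j)}, x_{(i,j)}$, then a single top-down sweep for the checkpoints $X_{(i,j)} = x_{2^i j}$) is cleaner and in fact gives a strictly better depth of $O(\log(T)\log(d))$, comfortably inside the stated budget. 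Work is the same order, dominated by the $O(dT^{\omega-1})$ precompute.

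There is one concrete slip in the odd-child recurrence, precisely at the spot you flagged. With $p = 2^i(j-1)$ and $s = 2^{i-1}(2j-1)$, the paper's definition \eqref{eq:xijdef} gives $x_{(i-1,2j-1)} = \sum_{t=p}^{s-1} \mm_{s:t+1} w_t$, whereas the true zero-initial-state sum over the block $[p+1,s]$ is $\sum_{t=p+1}^{s} \mm_{s:t+1} w_t = w_s + \sum_{t=p+1}^{s-1}\mm_{s:t+1} w_t$. The two differ by $w_s - \mm_{s:p+1} w_p$, so your identity double-counts $\mm_{s:p+1}w_p$ and drops $w_s$. The corrected recurrence is
\[
X_{(i-1,2j-1)} = \mm_{(i-1,2j-1)}\bigl(X_{(i,j-1)} - w_{p}\bigr) + x_{(i-1,2j-1)} + w_{s},
\]
which adds only two vector additions per node and leaves your complexity analysis unchanged. (The $j=1$ boundary case, with $X_{(i,0)} = x_0 = w_0$, is handled uniformly by this formula since $X_{(i,0)} - w_0 = 0$.) With this fix your argument is complete.
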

\begin{proof}
We follow notation of Lemma~\ref{lem:noC_maintain}, and assume that in depth $O(\log (T)\log(d))$ and work $O(dT^{\omega - 1})$, we have precomputed all $\mm_{(i, j)}$ and $x_{(i, j)}$ for $0 \le i \le \ell$ and $j \in [2^{\ell - i}]$.
Define $z_s \defeq x_s - w_s$ for all $s \in [2^\ell]$, and let $\mathcal{T}(\ell)$ be the total work it takes to compute all $\{z_s\}_{s \in [2^\ell]}$ in an instance of Problem~\ref{prob:rankone_maintain_all}, given access to all $\mm_{(i, j)}$ and $x_{(i, j)}$ defined in \eqref{eq:mmijdef} and \eqref{eq:xijdef}. In particular, \eqref{eq:xtdef} holds with the left-hand side replaced with $z_s$ and the right-hand side's summation ending at $s - 1$. We claim
\begin{equation}\label{eq:trecurse}
\mathcal{T}(\ell) = 2\mathcal{T}(\ell - 1) + O(dT\log T) \implies \mathcal{T}(\ell) = O(dT\log^2 T),
\end{equation}
which gives the total work bound because adding $w_s$ to each $z_s$ can be done in constant depth and $O(dT)$ work which does not dominate. Clearly, computing all $\{z_s\}_{s \in [2^{\ell - 1}]}$ can be done within work $\mathcal{T}(\ell - 1)$. Moreover, for each $2^{\ell - 1} < s \le 2^\ell$, note that
\begin{align*}
z_s = \underbrace{\sum_{t = 0}^{2^{\ell - 1} - 1}\mm_{s:t + 1} w_t}_{\defeq u_s} + \underbrace{\sum_{t = 2^{\ell - 1}}^{s - 1} \mm_{s:t + 1} w_t}_{\defeq v_s}.
\end{align*}
Computing all $\{v_s\}_{2^{\ell - 1}<s\le 2^\ell}$ can be done within work $\mathcal{T}(\ell - 1)$, as these constitute an independent copy of the problem over $2^{\ell - 1}$ iterations. Finally, we complete the proof of \eqref{eq:trecurse} by showing we can compute all $\{u_s\}_{2^{\ell - 1}<s\le 2^\ell}$ using $O(dT\log T)$ work and $O(\log^2(T)\log(d))$ depth. Define $u^\star \defeq \sum_{t = 0}^{2^{\ell - 1} - 1} \mm_{2^{\ell - 1}:t+1} w_t$, and note that
\[u_s = \mm_{s:2^{\ell - 1} + 1}u^\star\text{ for all } 2^{\ell - 1}<s\le 2^\ell.\]
Since we have access to all the $\mm_{(i, j)}$, we can compute the $u_s$ in a dyadic fashion, i.e., we first compute $u_{2^{\ell - 1} + 2^{\ell - 2}}$ and $u_{2^\ell}$ using a single matrix multiplication each, and then $u_{2^{\ell - 1} + 2^{\ell - 3}}$ and $u_{2^{\ell - 1} +3\cdot 2^{\ell - 3}}$, and so on. The work cost of multiplying by a matrix $\mm_{(i, j)}$ is $O(d2^i)$, so the overall work of computing all $\{u_s\}_{2^{\ell - 1}<s\le 2^\ell}$ is then indeed bounded by
\begin{align*}
O(d2^{\ell - 1}) + O(2 \cdot d2^{\ell - 2}) + O(2^2 \cdot d2^{\ell - 3}) + \ldots = O(dT\log T),
\end{align*}
as claimed. To see the depth bound, we can solve the two instances of $\mathcal{T}(\ell - 1)$ independently, leading to a recursion depth of $O(\log T)$. The sequential depth of each recursion layer (due to computing the $\{u_s\}_{2^{\ell - 1}<s\le 2^\ell}$) is bottlenecked by $O(\log (T)\log(d))$ due to the use of $O(\log T)$ matrix multiplications, each of which takes depth $O(\log d)$. Thus, overall the depth is $O(\log^2 (T)\log(d))$.
\end{proof}

By using the same reduction as in Corollary~\ref{cor:prob2solve}, we extend our solution in Lemma~\ref{lem:noC_maintain_all} to the general setting of Problem~\ref{prob:rankone_maintain_all}, giving our main result.

\begin{proposition}\label{prop:prob3solve}
We can solve Problem~\ref{prob:rankone_maintain_all} with depth $O(\log^2 (T)\log(d))$ and work $O(dT^{\omega - 1})$.
\end{proposition}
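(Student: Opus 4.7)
The plan is to follow the same reduction used in Corollary~\ref{cor:prob2solve}, which extended Lemma~\ref{lem:noC_maintain} to handle arbitrary scalars $\{c_t\}_{t \in [T]}$, but now applied to Lemma~\ref{lem:noC_maintain_all}. The key observation is that Lemma~\ref{lem:noC_maintain_all} already handles the harder aspect of the problem, namely maintaining \emph{all} intermediate iterates (not just the last one) in the presence of the offsets $\{w_t\}_{t \in [T]}$. So the remaining task is only to strip off the multiplicative scalars $c_t$ without affecting the asymptotic complexity.

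First, as in Corollary~\ref{cor:prob2solve}, I would reduce to the case where all $c_t$ are nonzero by restarting the recurrence immediately after the largest index where $c_t=0$ (the earlier iterates are unreachable). Under this assumption, I define the prefix products $C_t \defeq \prod_{s \in [t]} c_s$ and change variables $x_t = C_t y_t$, which yields the equivalent recurrence
\[
y_t = (\id_d - u_t v_t^\top)\,y_{t-1} + C_t^{-1} w_t\,,
\]
an instance of Problem~\ref{prob:rankone_maintain_all} with all scalar factors equal to $1$. The prefix products $\{C_t\}_{t \in [T]}$ can be precomputed in the same dyadic fashion used to form the $\mm_{(i, j)}$ in Lemma~\ref{lem:noC_maintain}, at depth $O(\log T)$ and work $O(T\log T)$; this is negligible compared to $O(dT^{\omega-1})$.

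Next, I would apply Lemma~\ref{lem:noC_maintain_all} to the instance with inputs $\{x_0, \{u_t, v_t, C_t^{-1} w_t\}_{t \in [T]}\}$ to obtain every $y_t$ in depth $O(\log^2(T) \log(d))$ and work $O(dT^{\omega-1})$. Recovering $x_t = C_t y_t$ for each $t \in [T]$ is a collection of $T$ independent scalar-vector multiplications, which costs $O(1)$ depth and $O(dT)$ work when performed in parallel. Summing the work across precomputation, the invocation of Lemma~\ref{lem:noC_maintain_all}, and the final rescaling yields total work $O(dT^{\omega-1})$, while the depth is dominated by the $O(\log^2(T)\log(d))$ term from Lemma~\ref{lem:noC_maintain_all}, proving the proposition.

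I do not anticipate any genuine obstacle here, as the change of variables decouples the multiplicative factors cleanly and the dominant costs are already absorbed by Lemma~\ref{lem:noC_maintain_all}. The only things to check are (i) the prefix-product precomputation fits within the budget (it does, since it is sub-logarithmic compared to the matrix work), and (ii) the rescalings $C_t^{-1} w_t$ and $C_t y_t$ add only lower-order overhead (they do, as each batch takes $O(dT)$ work and $O(1)$ depth).
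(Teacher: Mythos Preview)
Your reduction via the change of variables $x_t = C_t y_t$ and the appeal to Lemma~\ref{lem:noC_maintain_all} is exactly what the paper does, and the accounting you give for depth and work is correct.

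There is, however, a genuine slip in how you handle the case where some $c_t = 0$. You write that you would ``restart the recurrence immediately after the largest index where $c_t=0$ (the earlier iterates are unreachable).'' That reasoning is valid for Problem~\ref{prob:rankone_maintain}, where only $x_T$ is needed, but Problem~\ref{prob:rankone_maintain_all} asks for \emph{every} $x_t$, so the iterates before a zero are very much part of the required output. The paper's fix is to observe that a zero decouples the recurrence: each maximal contiguous run of indices with nonzero $c_t$ forms an independent instance (the first iterate of a new block is simply $w_t$, since $c_t = 0$ kills the dependence on the previous block). One then solves all blocks in parallel via the nonzero-case argument. Because the work bound $O(dT^{\omega-1})$ is superlinear in $T$, summing over blocks whose sizes total at most $T$ stays within $O(dT^{\omega-1})$, and the depth bound is unaffected since the blocks are handled in parallel. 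With this correction in place, your proof matches the paper's.
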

\begin{proof}
We first consider the case where all $\{c_t\}_{t \in [T]}$ are nonzero. Define the sequence $\{y_t\}_{t \in [T]}$ as in Corollary~\ref{cor:prob2solve}, which can be computed within the depth and work budgets given by Lemma~\ref{lem:noC_maintain_all}. Since
\[x_t = C_t y_t \text{ for all } t \in [T],\]
it suffices to compute all $\{C_t\}_{t \in [T]}$ and perform the scalings $C_t y_t$ in parallel, which can be done within the stated budgets by the arguments of Corollary~\ref{cor:prob2solve}. Finally, in the case where some $c_t = 0$, we can split the problem into independent contiguous blocks of nonzero $c_t$ values whose total sizes sum to at most $T$ and which can be solved in parallel. Since the claimed work is superlinear in $T$, it remains correct after operating on each contiguous block separately.
\end{proof}

As a consequence of Proposition~\ref{prop:prob3solve}, we have the following complexity bounds on Algorithm~\ref{alg:unconstrained_sgd}.

\begin{corollary}\label{cor:impl_sgd}
Following the notation in \eqref{eq:hdef} and Algorithm~\ref{alg:unconstrained_sgd}, let $S \in [T]$ be arbitrary. We can implement $T$ iterations of Algorithm~\ref{alg:unconstrained_sgd} using 
\begin{gather*}
O\Par{\dQuery + \frac T S \cdot \log^2(S)\log(d)} \text{ depth, and }
O\Par{T \cdot \tQuery + dTS^{\omega - 2}} \text{ work.}
\end{gather*}
\end{corollary}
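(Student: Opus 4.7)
The plan is to decompose Algorithm~\ref{alg:unconstrained_sgd} into a query phase followed by an iterative update phase, and to apply the parallel rank-one maintenance result of \Cref{prop:prob3solve} in a blocked fashion to obtain the stated $S$-tradeoff. First, the queries in the initial loop of \Cref{alg:unconstrained_sgd} — sampling the $\xi_t$'s and evaluating $g_t = g(\xi_t)$ and $g'_t = g(z - \xi_t)$ for each $0 \le t < T$ — are independent across $t$, so they can be executed in parallel with depth $O(\dQuery)$ and work $O(T \cdot \tQuery)$.

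Next, as observed in \eqref{eq:sgd_explicit} and \eqref{eq:problem_example}, the remaining update rule is exactly an instance of Problem~\ref{prob:rankone_maintain_all}. Given the queried gradients, the parameters $\{c_t, u_t, v_t, w_t\}_{t \in [T]}$ can be precomputed in $O(\log d)$ depth and $O(dT)$ work, bottlenecked by the inner products $\inprod{\xi_{t-1}}{z}$ appearing in $w_t$. To realize the $S$-parameterized tradeoff, I would partition $[T]$ into $T/S$ consecutive blocks of size $S$ and process them sequentially, passing the final iterate of each block as the initial vector into the next. Within each block, \Cref{prop:prob3solve} gives depth $O(\log^2(S)\log(d))$ and work $O(dS^{\omega - 1})$. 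Summing over $T/S$ sequential blocks yields depth $O((T/S)\log^2(S)\log(d))$ and aggregate work $O((T/S) \cdot dS^{\omega - 1}) = O(dTS^{\omega - 2})$; combined with the query-phase costs, this matches the claimed bounds.

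Finally, since \Cref{prop:prob3solve} produces all of $x_1, \dots, x_T$, computing $\xavg = \frac{1}{T}\sum_t x_t$ amounts to a single parallel sum in depth $O(\log T)$ and work $O(dT)$, which is dominated by the already-accounted-for terms. The proof requires no substantial new idea beyond this block decomposition: the work in \Cref{prop:prob3solve} is superlinear in $T$ (growing as $T^{\omega - 1}$), so breaking into blocks of size $S$ trades an extra $T/S$ depth factor for reducing per-block work to $O(dS^{\omega - 1})$, and blocked processing is valid because the recurrence in Problem~\ref{prob:rankone_maintain_all} is inherently sequential across blocks but can exploit the full rank-one structure within each block. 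There is no real obstacle; the conceptual work has already been done in establishing that the SGD update fits Problem~\ref{prob:rankone_maintain_all}.
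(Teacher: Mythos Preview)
Your proposal is correct and follows essentially the same approach as the paper: parallelize the independent oracle queries, observe via \eqref{eq:sgd_explicit} and \eqref{eq:problem_example} that the update recurrence is an instance of Problem~\ref{prob:rankone_maintain_all}, apply Proposition~\ref{prop:prob3solve} to each of $T/S$ consecutive blocks of size $S$ sequentially, and finally average the iterates. The paper's proof is more terse but contains no additional ideas.
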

\begin{proof}
Assume for simplicity that $S$ divides $T$, else we can obtain the result by increasing $T$ by a constant factor.
Recall from \eqref{eq:sgd_explicit}, \eqref{eq:problem_example} that implementing Algorithm~\ref{alg:unconstrained_sgd} is an instance of Problem~\ref{prob:rankone_maintain_all}, where we are required to compute the average iterate. Moreover we can compute all the inputs to Problem~\ref{prob:rankone_maintain_all} in parallel, which gives the query depth and query complexity. Our strategy is to use Proposition~\ref{prop:prob3solve} for every $S$ consecutive iterations, which gives us all the iterates in the stated computational depth and complexity by applying Proposition~\ref{prop:prob3solve}, $\frac T S$ times. Given all the iterates we can clearly output the average within the stated computational depth and complexity.
\end{proof}

\subsection{High-probability error bound reduction}\label{ssec:tournament}

We now give a simple reduction from an algorithm which returns an approximate minimizer with high probability, to one which returns an expected approximate minimizer. Our reduction assumes access to a bounded-variance gradient estimator. We note that a similar procedure appears as Section 4.2 of \cite{SidfordZ23}, but it does not quite suffice for our purposes due to the composite nature of our objective. We provide a different proof for completeness, which also shows a slightly stronger fact that the approximately-optimal point returned comes from the original set of candidates.

Finally, we note that our reduction has implications on the query complexity of high-probability stochastic convex optimization (i.e., Problem~\ref{prob:sco}) in the non-parallel setting. In particular, it shows that the expected error guarantee in Problem~\ref{prob:sco} can be boosted to have failure probability $\le \delta$ at a $\textup{polylog}(\frac 1 \delta)$ overhead in the query complexity. Such a result is classical when $g(x)$ satisfies stronger tail bounds (such as a sub-Gaussian norm), but to our knowledge the corresponding result in the bounded variance setting (as in Problem~\ref{prob:sco}) was only obtained recently by \cite{CarmonH24}. We do note that \cite{CarmonH24}'s approach yields an improved polylogarithmic overhead in $\frac 1 \delta$, which they show is optimal; we find it interesting to explore if different tradeoffs in Proposition~\ref{prop:high_prob_reduction} yield the same optimal result.

\begin{proposition}\label{prop:high_prob_reduction}
Let $h: \R^d \to \R$ be differentiable with minimizer $x^\star$, and assume $h(x) = h_1(x) + h_2(x)$ for all $x \in \R^d$ and we can evaluate $h_2(x)$ for any $x \in \R^d$. Further, suppose $S \defeq \{x_i\}_{i \in [k]}$ has $\norm{x_i - x_j} \le R$ for all $i, j \in [k]$, and $\min_{i\in[k]}h(x_i) - h(x^\star) \le \eps$. Finally, suppose $g_1(x)$  is an unbiased estimate for $\nabla h_1(x)$ and $\E[\norm{g_1(x)}^2] \le L^2$ for all $x$ in the convex hull of $S$. For $\delta \in (0, 1)$, there is an algorithm which returns $x \in S$ with $h(x) - h(x^\star) \le 2\eps$ with probability $\ge 1 - \delta$, using 
\begin{align*}
O\Par{\frac{L^2 R^2}{\eps^2} \cdot k \log\Par{\frac {\log k} \delta}} \text{ queries to } g_1 \text{ and } k \text{ evaluations of } h_2.
\end{align*}
The query depth used is $O(\log(k))$, and the computational depth used is $O((\log(d) + \log\log(\frac k \delta))\log(k))$.
\end{proposition}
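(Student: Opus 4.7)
The plan is to build an unbiased pairwise comparator for $h(x) - h(y)$ and then run a single-elimination knockout tournament with $\log_2 k$ rounds at round-dependent accuracies. The comparator exploits the identity
\[
h(x) - h(y) \;=\; h_2(x) - h_2(y) \;+\; \int_0^1 \inprod{\nabla h_1(y + t(x - y))}{x - y}\dd t,
\]
which becomes an unbiased estimator if we sample $t$ uniformly from $[0,1]$, query $g_1$ at $y + t(x - y)$ (which lies in the convex hull of $S$, so the variance bound applies), and return $\inprod{g_1}{x - y} + h_2(x) - h_2(y)$. Cauchy--Schwarz gives per-sample variance at most $L^2 R^2$, so median-of-means with $B = O(\log(1/\delta'))$ batches of $N = O(L^2 R^2/\alpha^2)$ samples returns an estimate within $\alpha$ of $h(x) - h(y)$ with failure probability at most $\delta'$, at a cost of $O(L^2 R^2/\alpha^2 \cdot \log(1/\delta'))$ queries.

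For the tournament, I would use per-match accuracy $\alpha_r = c\eps/r^2$ in round $r$ (with $c = 6/\pi^2$, so that $\sum_{r \ge 1} \alpha_r \le \eps$) and per-match failure $\delta_r = \delta/\log_2 k$, running the $k/2^r$ matches of round $r$ in parallel. Letting $y^\star \defeq \argmin_{x \in S} h(x)$ (so $h(y^\star) \le h(x^\star) + \eps$), trace the ancestry $u_0 = y^\star, u_1, \dots, u_{\log_2 k}$, where $u_r$ is the winner of the $r$-th match on the fixed path from $y^\star$'s leaf to the root of the bracket. Conditioning on the event that all $\log_2 k$ path-matches are $\alpha_r$-accurate (which fails with probability at most $\delta$ by union bound over the $\log_2 k$ path slots), the winner's estimate being no larger than the loser's implies $h(u_r) \le h(u_{r-1}) + \alpha_r$, so the champion $u_{\log_2 k}$ satisfies $h(u_{\log_2 k}) \le h(y^\star) + \sum_{r=1}^{\log_2 k} \alpha_r \le h(x^\star) + 2\eps$. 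Off-path matches may fail arbitrarily, which does not affect the output's quality.

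The total query cost is $\sum_{r=1}^{\log_2 k} (k/2^r) \cdot O(L^2 R^2 r^4/\eps^2 \cdot \log(\log k/\delta)) = O(k L^2 R^2/\eps^2 \cdot \log(\log k/\delta))$, since $\sum_{r \ge 1} r^4/2^r$ converges; the $k$ values of $h_2$ are precomputed at the start. Query depth is $O(\log k)$, as rounds are sequential but their matches are parallel, and per-round computational depth is dominated by $O(\log d)$ for parallel inner products and $O(\log B) = O(\log\log(k/\delta))$ for the median, giving $O((\log d + \log\log(k/\delta))\log k)$ overall. The main obstacle I anticipate is choosing the accuracy schedule: a uniform $\alpha_r = \eps/\log k$ also yields the correct approximation guarantee but inflates samples by a $\log^2 k$ factor, so the $1/r^2$ decay is needed to balance the $r^4$ growth in per-match cost against the $k/2^r$ decrease in match counts, yielding the tight $O(k)$ scaling stated in the proposition.
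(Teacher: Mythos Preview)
Your proposal is correct and follows essentially the same approach as the paper: the same line-integral estimator for $h_1(x)-h_1(y)$, the same median-of-means boosting, the same single-elimination tournament with a path-conditioning argument. The only cosmetic difference is the accuracy schedule---you use $\alpha_r = \Theta(\eps/r^2)$ while the paper uses a geometric schedule $\Delta_\ell = \tfrac{\eps}{3}(4/3)^{-\ell}$---but both choices make the relevant sum converge and yield the same final query bound.
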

\begin{proof}
Fix any two $i, j \in [k]$ with $i \neq j$. Our first step is to build a high-probability approximation subroutine for the value of $h_1(x_i) - h_1(x_j)$. To this end, observe that for $x^{(t)}_{i, j} \defeq (1 - t)x_i + tx_j$,
\begin{align*}h_1(x_j) - h_1(x_i) = \underbrace{\int_0^1 \inprod{\nabla h_1(x^{(t)}_{i, j})}{x_j - x_i} \dd t}_{\defeq I(i, j)} 
	= \E_{t \sim_{\textup{unif.}} [0,1]}\Brack{\inprod{\nabla h_1(x^{(t)}_{i, j})}{x_j - x_i}}.\end{align*}
Next, consider the estimator
\[Z(i, j) \defeq \inprod{g_1(x^{(t)}_{i, j})}{x_j - x_i},\text{ for } t \sim_{\textup{unif.}} [0, 1].\]
From the given assumptions, it is clear that $\E Z(i, j) = I(i, j)$ and 
\[\E Z(i,j)^2 \le \E\Brack{\norm{g_1(x^{(t)}_{i, j})}^2\norm{x_j - x_i}^2} \le L^2R^2.\]
Therefore, Chebyshev's inequality shows that averaging $\frac{4L^2 R^2}{\Delta^2}$ independent copies of $Z(i, j)$ produces a $\Delta$-additive approximation of $I(i, j)$ with probability $\frac 3 4$. A median of $O(\log(\frac {\log k} \delta))$ such independent averages then estimates $I(i, j)$ to additive error $\Delta$ with probability at least $1 - \frac \delta {\log k}$, by a Chernoff bound. The total computation required to produce this estimate for a pair $(i, j) \in [k] \times [k]$ is 
\[O\Par{\frac{L^2 R^2}{\Delta^2} \cdot \log\Par{\frac {\log k} \delta}} \text{ calls to } g_1.\]
Using two additional evaluations of $h_2$, we can thus estimate $h(x_j) - h(x_i)$ to additive error $\Delta$, with probability $\ge 1 - \frac \delta {\log k}$. To obtain the claim, we run a tournament on the elements of $S$ using our subroutine as an approximate comparator. Suppose that $k$ is a power of $2$ without loss of generality (otherwise we can duplicate $x_1$ appropriately), and initialize a complete binary tree on $2k - 1$ nodes (with depth $\log_2(k)$), placing elements of $S$ at the leaf nodes. We define the $i^{\text{th}}$ layer of the tree to be all nodes which are distance exactly $i$ from the leaf nodes (the leaf nodes themselves form layer $0$). Starting from layer $1$ and working upwards, for a node in layer $\ell$ with children $x_i$ and $x_j$, we compute $E(i, j)$, a $\Delta_\ell = \frac{\eps}{3} (\frac 4 3)^{-\ell}$-approximation to $h(x_i) - h(x_j)$, and promote the child with smaller estimated $h$ value (i.e., we promote $x_i$ if $E(i, j) \le 0$, and we promote $x_j$ otherwise). Assume without loss of generality that $x_1$ minimizes $h(x)$ over $S$. Conditioned on all estimates on $x_1$'s root-to-leaf path succeeding (which happens with probability $1-\delta$ since there are $\log k$ nodes), the minimum function value on level $\ell$ is at most $h(x_1) + \sum_{i\in[\ell]}  \frac{\eps}{3} (\frac 4 3)^{-i}$, and so the algorithm returns some node $y$ with  $h(y) \le \min_{i \in [k]} h(x_i) + \epsilon \le h(x^\star) + 2 \eps$. The complexity and correctness follow by setting $\Delta \gets \frac \eps {\log_2(k)}$. The total failure probability follows from a union bound, since there are at most $k - 1$ comparisons (as each comparison eliminates an element).

We now control the number of gradients computed by the algorithm. Level $\ell$ of the tree calls the estimation subroutine $k 2^{-\ell}$ times with failure probability $\frac{\delta}{\log k}$ and error $\frac{\eps}{3} (\frac 4 3)^{-\ell}$: summing over all layers gives a total gradient bound of 
\begin{align*}
O(1) \sum_{\ell \in [\log_2 k]} \frac{L^2 R^2 k 2^{-\ell}}{\eps^2 (\frac 4 3)^{-2\ell}}  \log \left(\frac{\log k} \delta\right) &= O\left(\frac{L^2 R^2}{\eps^2} k \log \left(\frac{\log k} \delta\right)\right)  \sum_{\ell \in [\log_2 k]}  \Par{\frac 8 9}^{\ell} \\
&= O\left(\frac{L^2 R^2}{\eps^2} k \log \left(\frac{\log k} \delta\right)\right).
\end{align*}

\end{proof}

\subsection{Putting it all together}\label{ssec:combine}

In this section, we combine the tools given in the previous sections to develop two high-probability optimization primitives, which will be used in Section~\ref{ssec:binary_search} in conjunction with a binary search to give our overall ball optimization oracle implementation. We now formally define the two types of oracles we require for implementing our binary search. Roughly speaking, the first type of oracle (Definition~\ref{def:phase_one}) is used to find a range of regularization amounts $\alpha$ such that the resulting regularized minimizers lie in a ball of radius $O(r)$. The second type of oracle (Definition~\ref{def:phase_two}) is then used to obtain accurate minima for our original constrained function. In the following definitions, for a fixed differentiable convex function $F$ and $\alpha > 0$, we let
\begin{equation}\label{eq:xsadef}\xsa \defeq \argmin_{x \in \R^d} F(x) + \frac \alpha 2 \norm{x}^2.\end{equation}

\begin{definition}\label{def:phase_one}
	We call $\oracle_1$ an $(r, \beta)$-\emph{phase-one oracle} for $F: \R^d \to \R$ if on input $\alpha \ge \beta$, following notation \eqref{eq:xsadef}, $\oracle_1$ returns $x$ satisfying 
	\[\norm{x - \xsa} \le \frac{r + \norm{\xsa}}{100}.\]
\end{definition}

\begin{definition}\label{def:phase_two}
	We call $\oracle_2$ a $(\Delta, r, \beta)$-\emph{phase-two oracle} for $F: \R^d \to \R$ if on input $\alpha \ge \beta$, following notation \eqref{eq:xsadef}, $\oracle_2$ returns $x$ satisfying 
	\[F(x) + \frac \alpha 2 \norm{x}^2 \le F(\xsa) + \frac \alpha 2 \norm{\xsa}^2 + \Delta.\]
\end{definition}

We specialize the following discussion to the specific context where $F$ is of the form 
\begin{equation}\label{eq:constrained_newton}
	\begin{gathered}
		\argmin_{x \in \ball(r)}\inprod{\nabla \frl(z)}{x} + \norm{x - z}^2_{\nabla^2 f_\rho(\0_d)} + \lam \norm{x - z}^2 \\
		= \argmin_{x \in \ball(r)}\inprod{\nabla f_\rho(z) - \lam z}{x} + \norm{x - z}^2_{\nabla^2 f_\rho(\0_d)} + \lam \norm{x}^2,
	\end{gathered}
\end{equation}		
where $\norm{z} \le r$ and $\rho \ge r$, $\lam > 0$. These constrained subproblems arise in an approximate Newton's method which we develop in Section~\ref{ssec:newton}. Formally, we define
\begin{equation}\label{eq:specific_f}F(x) \defeq \inprod{\nabla f_\rho(z) - \lam z}{x} + \norm{x - z}^2_{\nabla^2 f_\rho(\0_d)}.\end{equation}
We use two tools to boost constant-accuracy bounds to high probability. The first is the reduction in Proposition~\ref{prop:high_prob_reduction}, and the second is the following standard geometric aggregation method.

\begin{lemma}[Claim 1, \cite{KelnerLLST23}]\label{lem:agg}
	Let $\delta \in (0, 1)$ and $x \in \R^d$ be unknown, and let $\alg$ be an algorithm which returns $x' \in \R^d$ such that $\|x' - x\| \le \frac \Delta 3$ with probability $\ge \frac 2 3$ in $\mathcal{D}_{\alg}$ depth and $\mathcal{T}_{\alg}$ work. There is an algorithm which returns $y$ such that $\norm{y - x} \le \Delta$ with probability $\ge 1 - \delta$, using $O(\mathcal{D}_{\alg}+ \log(d))$ depth and $O(\mathcal{T}_{\alg} \cdot \log(\frac 1 \delta) + d\log^2(\frac 1 \delta))$ work.
\end{lemma}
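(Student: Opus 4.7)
The plan is to use the standard "run-and-majority" boosting technique for algorithms with constant success probability, combined with a pairwise distance filter. First, I would run $\alg$ independently $k = O(\log(\tfrac{1}{\delta}))$ times in parallel to produce candidates $y_1, \dots, y_k$. Call a candidate \emph{good} if $\|y_i - x\| \le \Delta/3$, and note that each $y_i$ is good independently with probability $\ge 2/3$. A Chernoff bound then implies that, for $k = \Theta(\log(\tfrac{1}{\delta}))$ with a suitable constant, the fraction of good candidates exceeds $1/2$ with probability at least $1 - \delta$. I would condition on this event for the remainder of the argument.

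The aggregation step uses a simple pigeonhole observation. For each $i \in [k]$, compute the count $c_i = |\{j \in [k] : \|y_i - y_j\| \le 2\Delta/3\}|$ and return any $y_i$ with $c_i > k/2$. To see correctness, note that if two candidates $y_i$ and $y_j$ are both good then $\|y_i - y_j\| \le 2\Delta/3$, so any good $y_i$ automatically has $c_i > k/2$ (hence such an index exists). Conversely, if $c_i > k/2$, then since the good indices also form a strict majority, the two majorities intersect, yielding a good $y_j$ with $\|y_i - y_j\| \le 2\Delta/3$; the triangle inequality then gives $\|y_i - x\| \le \|y_i - y_j\| + \|y_j - x\| \le \Delta$, as required.

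For the resource bounds, the $k$ calls to $\alg$ run in parallel, contributing $\mathcal{D}_\alg$ depth and $O(\mathcal{T}_\alg \cdot \log(\tfrac{1}{\delta}))$ work. Computing all $\binom{k}{2} = O(\log^2(\tfrac{1}{\delta}))$ pairwise squared distances in $\R^d$ uses $O(d \log^2(\tfrac{1}{\delta}))$ work and $O(\log d)$ depth under the paper's parallel model (each dot product is $O(\log d)$ depth). Comparing each distance to the threshold $2\Delta/3$, summing the indicator counts $c_i$, and selecting an index with $c_i > k/2$ costs only $O(\log k) = O(\log \log(\tfrac{1}{\delta}))$ additional depth and $O(k^2)$ work, both absorbed into the stated bounds. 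Summing yields $O(\mathcal{D}_\alg + \log d)$ depth and $O(\mathcal{T}_\alg \cdot \log(\tfrac{1}{\delta}) + d \log^2(\tfrac{1}{\delta}))$ work.

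The only mild subtlety is calibrating the Chernoff constant so that a $(1/2 + \Omega(1))$-majority of the $y_i$ are good except with probability $\delta$; this determines the constant inside $k = O(\log(\tfrac{1}{\delta}))$ but does not affect the asymptotic complexity. There is no real obstacle beyond this bookkeeping, and the argument recovers the cited statement of \cite{KelnerLLST23}.
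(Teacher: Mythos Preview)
Your proposal is correct and is precisely the standard geometric aggregation argument underlying this kind of statement. Note that the present paper does not actually prove Lemma~\ref{lem:agg}; it imports the result from \cite{KelnerLLST23} (Claim~1 there) without proof, so there is no in-paper argument to compare against. Your proof is the expected one: run $k = \Theta(\log(1/\delta))$ independent copies in parallel, use Chernoff to guarantee a strict majority of ``good'' outputs, and select any output whose $\tfrac{2\Delta}{3}$-neighborhood contains a majority, which is exactly the approach of the cited source.
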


We now state our oracle implementations and their guarantees.

\begin{lemma}\label{lem:oracle_one_impl}
	Let $F$ be defined as in \eqref{eq:specific_f}, assume $\rho \le \frac L \lam$, and let $\delta \in (0, 1)$. We can implement an $(r, 2\lam)$-phase-one oracle for $F$ which succeeds with probability $\ge 1 - \delta$ with
	\begin{gather*}O\Par{\dQuery + \log^2\Par{\frac{L}{\lam r}}\log(d)} \text{ depth,} \\
		\text{and } O\Par{\Par{\frac{L^2\log\Par{\frac{L}{\lam r}}}{\lam^2 r^2}}\log\Par{\frac 1 \delta} \cdot \tQuery + d\Par{\frac{L^2\log\Par{\frac{L}{\lam r}}}{\lam^2 r^2}}^{\omega - 1}\log\Par{\frac 1 \delta} + d\log^2\Par{\frac 1 \delta}} \text{ work.}
	\end{gather*}
\end{lemma}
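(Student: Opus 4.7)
The plan is to approximate $\xsa$ by running $\UCSS$ (Algorithm~\ref{alg:unconstrained_sgd}) on the unconstrained quadratic $F(x) + \tfrac{\alpha}{2}\norm{x}^2$, which matches the form of \eqref{eq:unconstrained} with the identification $\Lam \gets \alpha$, $v \gets -\lam z$, $g \gets \nabla f_\rho(z)$, $\mh \gets \nabla^2 f_\rho(\0_d)$, and with the unbiased stochastic gradient estimator of \eqref{eq:grad_unbiased}. The hypotheses of Corollary~\ref{cor:specific_sgd_analysis} are satisfied up to absolute constants: we have $x_0 = \lam z/(2\alpha)$ with $\norm{x_0} \le r/4 \le \rho$ since $\norm{z}\le r \le \rho$ and $\alpha \ge 2\lam$, and although $\rho \le L/\lam$ only gives $\rho \le 2 L/\alpha$, the underlying hypothesis $L_2 \ge \Lam$ of Lemma~\ref{lem:gen_sgd_analysis} (with $L_2 = \sqrt 8 L/\rho$) still holds, so the proof of Corollary~\ref{cor:specific_sgd_analysis} goes through with slightly inflated constants.

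From the resulting expected function-value bound, $\alpha$-strong convexity of $F + \tfrac{\alpha}{2}\norm{\cdot}^2$ yields
\[
\E \norm{\xavg - \xsa}^2 \le \frac{2}{\alpha}\Par{\frac{C L^2 \log(T + T_0)}{\alpha T} + \frac{\alpha \rho^2}{2T}}\Par{1 + \frac{\norm{\xsa}^2}{\rho^2}}
\]
for an absolute constant $C$. Using $\rho \ge r$, the factor $1 + \norm{\xsa}^2/\rho^2$ is at most $(r + \norm{\xsa})^2/r^2$; combining with $\alpha \ge 2\lam$ and $\rho \le L/\lam$ bounds the bracketed factor by $O(L^2 \log(T+T_0)/(\lam^2 T))$. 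Choosing
\[
T = \Theta\Par{\frac{L^2 \log(L/(\lam r))}{\lam^2 r^2}}
\]
(large enough to absorb the self-referential $\log(T+T_0)$) makes $\E \norm{\xavg - \xsa}^2 \le \bigl((r+\norm{\xsa})/(300\sqrt 3)\bigr)^2$, so Markov's inequality on $\norm{\xavg - \xsa}^2$ gives $\Pr[\norm{\xavg - \xsa} \le (r + \norm{\xsa})/300] \ge 2/3$.

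To boost to failure probability $\delta$, apply Lemma~\ref{lem:agg} with $\Delta \defeq (r + \norm{\xsa})/100$ to this subroutine; the resulting output $x$ satisfies $\norm{x - \xsa} \le \Delta$ with probability $\ge 1 - \delta$, which is exactly the phase-one guarantee of Definition~\ref{def:phase_one}. Each call to $\UCSS$ is in turn implemented via Corollary~\ref{cor:impl_sgd} with block size $S \gets T$, giving depth $O(\dQuery + \log^2(T)\log d)$ and work $O(T \cdot \tQuery + d T^{\omega - 1})$; combining with the $\log(1/\delta)$ work overhead and additive $\log d$ depth from Lemma~\ref{lem:agg}, and substituting $T = \Theta(L^2 \log(L/(\lam r))/(\lam^2 r^2))$, yields the stated complexity bounds (using $\log(T) = O(\log(L/(\lam r)))$).

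The main subtlety is that the target radius $\Delta$ depends on the unknown norm $\norm{\xsa}$, so we cannot calibrate the Markov step to a fixed scale. This resolves because our squared-distance bound from Corollary~\ref{cor:specific_sgd_analysis} itself scales with $(r + \norm{\xsa})^2$ (via the $(1 + \norm{\xsa}^2/\rho^2)$ factor and strong convexity), and Lemma~\ref{lem:agg} only requires that its input $\Delta$ be a fixed (possibly unknown to the algorithm) scalar, which the deterministic quantity $(r + \norm{\xsa})/100$ is. Beyond this, the only other bookkeeping is verifying that $\alpha \ge 2\lam$, $\rho \le L/\lam$, and $\rho \ge r$ jointly imply the $L_2 \ge \Lam$ condition of Lemma~\ref{lem:gen_sgd_analysis} and that $T_0 = 64L^2/(\rho^2 \alpha^2) = O(L^2/(\lam^2 r^2))$ so that $\log(T + T_0) = O(\log(L/(\lam r)))$; both are routine.
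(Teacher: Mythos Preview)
Your proposal is correct and follows essentially the same approach as the paper: run $\UCSS$ with $\Lam \gets \alpha$, convert the expected function-error bound of Corollary~\ref{cor:specific_sgd_analysis} into a distance bound via $\alpha$-strong convexity and Markov, then boost with Lemma~\ref{lem:agg} and implement via Corollary~\ref{cor:impl_sgd} with $S \gets T$.

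One small slip in your hypothesis check: from $\rho \le L/\lam$ and $\alpha \ge 2\lam$ you \emph{cannot} conclude $\rho \le 2L/\alpha$ (the inequality goes the wrong way; $\alpha$ can be much larger than $2\lam$ in the binary search), so your claim that $L_2 \ge \Lam$ ``still holds'' is not justified as written. This does not break the argument, though: the role of $L_2 \ge \Lam$ in Lemma~\ref{lem:gen_sgd_analysis} is only to ensure $T_0 \ge 8$ so that the partial harmonic sum $\sum_{0 \le t < T} \tfrac{1}{t+T_0}$ is at most $\log(T+T_0)$; for smaller $T_0$ the bound is $O(1) + \log(T+T_0)$, which is absorbed into constants exactly as you anticipated. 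The paper's proof glosses over the same point.
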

\begin{proof}
	We first show how to obtain $x$ such that $\norm{x - \xsa} \le \frac{r + \norm{\xsa}}{300}$ with probability $\ge \frac 2 3$. By Markov's inequality and $\alpha$-strong convexity of $F(x) + \frac{\alpha}{2}\norm{x}^2$, it is enough to produce $x$ such that
	\[\E\Brack{\Par{F(x) + \frac \alpha 2 \norm{x}^2} - \Par{F(\xsa) + \frac \alpha 2 \norm{\xsa}}^2}\le \frac \alpha 6 \cdot \frac{(r + \norm{\xsa}^2)}{300^2}.\]
	In the context of Corollary~\ref{cor:specific_sgd_analysis} (with $\Lam \gets \alpha$), it suffices to take
	\[T = O\Par{\frac{\rho^2}{r^2} + \frac{L^2\log\Par{\frac{L}{\alpha r}}}{\alpha^2 r^2}} = O\Par{\frac{L^2\log\Par{\frac{L}{\alpha r}}}{\alpha^2 r^2}}.\]
	The conclusion follows from Corollary~\ref{cor:impl_sgd} (with $S \gets T$) and Lemma~\ref{lem:agg}.
\end{proof}

\begin{lemma}\label{lem:oracle_two_impl}
	Let $F$ be defined as in \eqref{eq:specific_f}, assume $\rho \in [r, \frac L \lam]$, and let $\delta \in (0, 1)$. For $\Delta \le \frac{\lam r^2}{100}$, we can implement a $(\Delta, r, \max(\alpha_{3r}, 2\lam))$-phase-two oracle for $f$ which succeeds with probability $\ge 1 - \delta$ with
	\begin{align*}
		O\Par{\log\log\Par{\frac 1 \delta} \cdot \dQuery + \frac{\lam r^2}{\Delta}\log\Par{\frac{L^2}{\lam\Delta}}\log^2\Par{\frac{L}{\lam r}}\log\Par{\frac d \delta}} \text{  depth,} \\
		\text{and } O\Par{\frac{L^2\log\Par{\frac{L^2}{\lam\Delta}}}{\lam\Delta}\log^3\Par{\frac 1 \delta} \cdot \tQuery + d\log^4\Par{\frac{L^2}{\delta\lam\Delta}} \cdot  \frac{\lam r^2}{\Delta} \cdot\Par{\frac{L^2}{\lam^2 r^2}}^{\omega -1}} \text{ work.}
	\end{align*}
\end{lemma}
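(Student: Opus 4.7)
The plan is to (i) invoke $\UCSS$ (Algorithm~\ref{alg:unconstrained_sgd}) to produce in-expectation approximate minimizers of the $\alpha$-regularized quadratic, (ii) run $O(\log(1/\delta))$ independent trials to get multiple candidates, and (iii) boost to a high-probability function value guarantee via Proposition~\ref{prop:high_prob_reduction}, possibly after an iterate-aggregation step using Lemma~\ref{lem:agg} to shrink the effective diameter.

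To apply $\UCSS$, I observe that $F(x) + \tfrac{\alpha}{2}\norm{x}^2$ matches the form \eqref{eq:unconstrained} with $\Lam = \alpha$, $g = \nabla f_\rho(z)$, $v = -\lam z$, and $\mh = \nabla^2 f_\rho(\0_d)$. The preconditions of Corollary~\ref{cor:specific_sgd_analysis} are immediate: $\norm{z} \le r \le \rho$ by assumption, $\norm{x_0} = \lam\norm{z}/(2\alpha) \le r/4$ using $\alpha \ge 2\lam$, and $\rho \le L/\alpha$ follows (at least in the relevant parameter regime) from $\rho \le L/\lam$. Since $\alpha \ge \alpha_{3r}$ implies $\norm{\xsa} \le 3r \le 3\rho$, the correction factor $1 + \norm{\xsa}^2/\rho^2$ in Corollary~\ref{cor:specific_sgd_analysis} is $O(1)$. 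Setting $T = \Theta(L^2 \log(L^2/(\lam\Delta))/(\lam\Delta))$ therefore yields a single $\xavg$ with expected excess value $\le \Delta/c$ for a small constant $c$. Implementing this via Corollary~\ref{cor:impl_sgd} with chunk size $S = \Theta(L^2/(\lam^2 r^2))$ produces the per-trial depth $T/S = \Theta(\lam r^2/\Delta)$ (times polylogarithmic factors in $d$, $S$) and per-trial work $T \cdot \tQuery + dTS^{\omega-2}$.

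To go from in-expectation to high probability, I would run $k = O(\log(1/\delta))$ independent trials; by Markov's inequality and independence, $\min_i(h(x_i) - h(x^\star)) \le \Delta/2$ holds with probability $\ge 1 - \delta/2$. I would then apply Proposition~\ref{prop:high_prob_reduction} with the natural composite decomposition $h_1(x) = \inprod{\nabla f_\rho(z)}{x} + \norm{x-z}_{\nabla^2 f_\rho(\0_d)}^2$ and $h_2(x) = -\lam\inprod{z}{x} + \tfrac{\alpha}{2}\norm{x}^2$. The gradient estimator of Lemma~\ref{lem:second_moment} has variance $O(L^2)$ over the convex hull of the candidates since all candidates lie within $O(r) \le O(\rho)$ of $z$. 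This yields a point $x$ with $h(x) - h(x^\star) \le \Delta$ with probability $\ge 1 - \delta/2$; a union bound completes the high-probability guarantee.

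The main obstacle is ensuring the tournament cost does not dominate the trial cost. A direct application of Proposition~\ref{prop:high_prob_reduction} with diameter $R = O(r)$ and $\eps = \Theta(\Delta)$ costs $\widetilde O(L^2 r^2/\Delta^2)$ queries, exceeding the target $\widetilde O(L^2/(\lam\Delta))$ by a factor $\lam r^2/\Delta \ge 100$. To close this gap, I expect the proof must first shrink the effective diameter of the candidate set. The natural route is to use $\alpha$-strong convexity of $h$ to convert the expected value error into an expected iterate error $\E\norm{\xavg - \xsa}^2 \le O(\Delta/\alpha)$, then apply Lemma~\ref{lem:agg} to boost this to a high-probability iterate bound in a ball of radius $\sqrt{\Delta/\alpha}$ around $\xsa$. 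The tournament can then be run with $R = \sqrt{\Delta/\alpha} = O(\sqrt{\Delta/\lam})$, so its query cost becomes $\widetilde O(L^2/(\lam\Delta))$ per comparison, matching the per-trial scale. Balancing the two conversion directions (function-value $\to$ iterate via strong convexity, and iterate $\to$ function-value via the smoothness $O(L/\rho + \alpha)$ from Fact~\ref{fact:gconv}) and tracking the polylogarithmic overhead from $\log^3(1/\delta)$ is the key technical step.
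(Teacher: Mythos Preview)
Your proposal is correct and follows essentially the same approach as the paper: run $\UCSS$ to expected error $\Theta(\Delta)$, repeat $k=O(\log(1/\delta))$ times, use $\alpha$-strong convexity to turn the expected function error into a constant-probability iterate bound $\norm{\xavg-\xsa}\le O(\sqrt{\Delta/\alpha})$, apply Lemma~\ref{lem:agg} across the $k$ runs to get a high-probability center $\bx$, and then run the tournament of Proposition~\ref{prop:high_prob_reduction} on the candidates with effective diameter $R=O(\sqrt{\Delta/\alpha})$ so that the tournament's $L^2R^2/\eps^2$ cost becomes $O(L^2/(\lam\Delta))$.

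Two small clarifications relative to the paper's execution. First, the concrete mechanism after Lemma~\ref{lem:agg} is simply to \emph{filter}: keep only those candidates $x_i$ with $\norm{x_i-\bx}\le O(\sqrt{\Delta/\alpha})$ and run the tournament on this subset, which necessarily contains the good $x_{i^\star}$. Second, your closing remark about ``balancing'' a smoothness-based iterate-to-function-value conversion is not needed; the tournament in Proposition~\ref{prop:high_prob_reduction} certifies function-value differences directly via line-integral gradient estimates, so no separate smoothness argument enters.
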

\begin{proof}
	Since $\alpha \ge \alpha_{3r}$, we can produce a point $x$ with expected suboptimality $\frac \Delta 6$ to the function $F(x) + \frac \alpha 2 \norm{x}^2$ by calling Corollary~\ref{cor:specific_sgd_analysis} with
	\[T = O\Par{\frac{\alpha \rho^2}{\Delta} + \frac{L^2\log\Par{\frac{L^2}{\alpha\Delta}}}{\alpha\Delta}} = O\Par{\frac{L^2\log\Par{\frac{L^2}{\alpha\Delta}}}{\alpha\Delta}}.\]
	Therefore, by Markov's inequality $x$ has suboptimality $\frac \Delta 2$ with probability $\ge \frac 2 3$. Moreover, each $x$ which achieves this suboptimality has, by $\alpha$-strong convexity,
	\[\norm{x - \xsa} \le \sqrt{\frac{\Delta}{\alpha}} \le \frac r {10}.\]
	We run this algorithm $k = O(\log \frac 1 \delta)$ times, where the constant is large enough that Lemma~\ref{lem:agg} applies with probability $\ge 1 - \frac \delta 3$, and also $k \ge \log_3(\frac 3 \delta)$, so some run produces $x$ with suboptimality gap $\frac \Delta 2$ with probability $\ge 1 - \frac \delta 3$. Call $A = \{x_i\}_{i \in [k]}$ the set of output points, and let $x_{i^\star} \in S$ satisfy \[F(x_{i^\star}) + \frac \alpha 2 \norm{x_{i^\star}}^2 - F(\xsa) - \frac \alpha 2 \norm{\xsa}^2 \le \frac \Delta 2.\] 
	By Lemma~\ref{lem:agg}, we obtain $\bx$ with $\norm{\bx - \xsa} \le 3\sqrt{\Delta/\alpha}$. Let $B \subseteq A$ be the elements of $A$ with $\norm{\bx - x} \le 4\sqrt{\Delta/\alpha}$, which contains $x_{i^\star}$ by definition. Then for any $x, x' \in B$, we have
	\[\norm{x - x'} \le 8\sqrt{\frac \Delta \alpha}.\]
	Moreover, since all points in $B$ lie at distance $\le \frac {2r} 5$ from $\xsa$, their norms are all at most $4r$. Since $\norm{z} \le r$ by assumption, Lemma~\ref{lem:second_moment} shows we can implement an unbiased estimator for the gradient of the implicit part of \eqref{eq:specific_f}, i.e., $\nabla f_\rho(z) + 2\nabla^2 f_\rho(\0_d)(x - z)$, with second moment $O(L^2)$, for any $x$ in the convex hull of $B$. We therefore can apply Proposition~\ref{prop:high_prob_reduction} with $\eps \gets \frac \Delta 2$ to obtain an element of $B$ with suboptimality gap $\le \Delta$ with probability $\ge 1 - \frac \delta 3$, within the stated complexities. We can check that all other steps also fall within the stated complexities, using Corollary~\ref{cor:impl_sgd} with $S \gets \frac{L^2}{\alpha \lam r^2}$.
\end{proof} %
\section{Parallel stochastic convex optimization}
\label{sec:framework}
\newcommand{\xaopt}{x_\alpha^\star}

In this section, we prove Theorem~\ref{thm:main} by using the results of Section~\ref{sec:quadratic} to implement the ball optimization oracles required by Proposition~\ref{prop:ball_accel}. Our reduction from (constrained) ball optimization to the (unconstrained) quadratic problems considered by Section~\ref{sec:quadratic} proceeds in two steps.

\begin{enumerate}
    \item In Section~\ref{ssec:newton}, we show how to use Hessian stability of the ball optimization oracle subproblems (Corollary~\ref{cor:conv_reg_stable}) to efficiently solve these problems using an approximate Newton's method. 
    \item The subproblems required by our method in Section~\ref{ssec:newton} are constrained optimization problems, which are almost compatible with our tools in Section~\ref{sec:quadratic}. In Section~\ref{ssec:binary_search}, we develop a simple binary search procedure, inspired by a procedure in \cite{JambulapatiRT23}, which reduces each constrained optimization problem to a small number of unconstrained stochastic optimization problems.
\end{enumerate}

Finally, we show how to combine the pieces and give our proof of Theorem~\ref{thm:main} in Section~\ref{ssec:full_algo}.

\subsection{Approximate Newton's method}\label{ssec:newton}

In this section, we state and analyze an approximate variant of a constrained Newton's method under Hessian stability, patterned off classical analyses of gradient descent in a quadratic norm.

\begin{algorithm2e}
	\caption{$\CN(\lam, T, x_0, f, \phi)$}
	\label{alg:quad_newton}
	\DontPrintSemicolon
		\codeInput Positive definite $\ma \in \R^{d \times d}$, $T \in \N$,  $x_0 \in \xset$, differentiable $f: \xset \to \R^d$, $\phi > 0$ \;
		\For{$0 \le t < T$}{
		$x_{t + 1} \gets $ any (randomized) point in $\xset$ satisfying
  \[\E \inprod{\nabla f(x_t)}{x_{t + 1}} + \norm{x_{t + 1} - x_t}^2_{\ma} \le \min_{x \in \xset}\Brace{\inprod{\nabla f(x_t)}{x } + \norm{x - x_t}^2_{\ma}} + \frac \phi {20} \]\label{line:min_quad}
		}
    \codeReturn $x_T$
\end{algorithm2e}

\begin{lemma}\label{lem:quad_newton}
Let $\phi > 0$, let $f: \xset \to \R$ be twice-differentiable for convex $\xset \subset \R^d$, and let $x^\star \defeq \argmin_{x \in \xset} f(x)$. Assume that $\half \ma \preceq \nabla^2 f(x) \preceq 2\ma \text{ for all } x \in \xset$, 
for positive definite $\ma \in \R^{d \times d}$. Algorithm~\ref{alg:quad_newton} with $T \gets O(\log \frac{f(x_0) - f(x^\star)}{\phi})$ returns $x_T \in \xset$ satisfying $\E f(x_T) \le f(x^\star) + \phi$. 
\end{lemma}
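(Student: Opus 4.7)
The plan is to treat Algorithm~\ref{alg:quad_newton} as a preconditioned gradient method whose iterate $x_{t+1}$ is an approximate minimizer of the local quadratic model $Q_t(x) \defeq \inprod{\nabla f(x_t)}{x - x_t} + \norm{x - x_t}^2_{\ma}$. Since $Q_t$ differs from the objective in Line~\ref{line:min_quad} only by the constant $-\inprod{\nabla f(x_t)}{x_t}$, the $\phi/20$-approximate optimality hypothesis carries over to $Q_t$. My target is a one-step contraction $\E[\Delta_{t+1} \mid x_t] \le \tfrac34 \Delta_t + \phi/20$ on $\Delta_t \defeq f(x_t) - f(x^\star)$, which when iterated gives the claimed bound.

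First I would exploit the Hessian sandwich $\half \ma \preceq \nabla^2 f(x) \preceq 2\ma$ on $\xset$: together with the integral form of Taylor's theorem this yields, for all $x, y \in \xset$, an upper bound $f(y) \le f(x) + \inprod{\nabla f(x)}{y - x} + \norm{y - x}^2_{\ma}$ and a lower bound $f(y) \ge f(x) + \inprod{\nabla f(x)}{y - x} + \tfrac14 \norm{y - x}^2_{\ma}$. Taking $(x,y) = (x_t, x_{t+1})$ in the upper bound gives $\Delta_{t+1} - \Delta_t \le Q_t(x_{t+1})$; taking $(x,y) = (x_t, x^\star)$ in the lower bound gives $a \ge \Delta_t + b/4$, where $a \defeq \inprod{\nabla f(x_t)}{x_t - x^\star}$ and $b \defeq \norm{x_t - x^\star}^2_{\ma}$.

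Next I would upper bound $\min_{x \in \xset} Q_t(x)$ by evaluating $Q_t$ along the segment $x_t + \alpha(x^\star - x_t)$, which lies in $\xset$ by convexity, yielding $-\alpha a + \alpha^2 b$. A short case split in $\alpha \in [0,1]$ handles the argmin: if $a \le 2b$, the unconstrained optimizer $\alpha = a/(2b)$ is feasible and delivers value $-a^2/(4b) \le -\Delta_t/4$, using the AM--GM consequence $a \ge \Delta_t + b/4 \ge \sqrt{\Delta_t b}$; otherwise $\alpha = 1$ gives value $b - a \le -a/2 \le -\Delta_t/2$ since $a \ge \Delta_t$. Either way $\min_{x \in \xset} Q_t(x) \le -\Delta_t/4$, so combining with the Taylor upper bound and the $\phi/20$ subproblem error yields the desired $\E[\Delta_{t+1} \mid x_t] \le \tfrac34 \Delta_t + \phi/20$. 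Iterating and summing the geometric series gives $\E \Delta_T \le (3/4)^T \Delta_0 + \phi/5$, which is at most $\phi$ once $T = O(\log(\Delta_0/\phi))$ with $\Delta_0 = f(x_0) - f(x^\star)$. The only nontrivial ingredient is the two-case optimization in $\alpha$; the rest is routine Taylor-remainder bookkeeping under the multiplicative Hessian stability hypothesis, and I expect no serious obstacle beyond carefully tracking the constant in the contraction factor against the $\phi/20$ per-step slack.
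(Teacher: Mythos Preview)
Your proposal is correct and follows essentially the same approach as the paper: both use the Hessian sandwich to obtain quadratic upper and lower Taylor bounds, restrict the subproblem minimization to the segment $x_t + \alpha(x^\star - x_t)$, and derive a one-step linear contraction on the suboptimality gap. The only differences are arithmetic: the paper uses first-order optimality of $x^\star$ together with convexity (bounding $\norm{x_t - x^\star}_{\ma}^2 \le 4\Phi_t$ and $f(x^{(s)}) - f(x^\star) \le (1-s)\Phi_t$) to obtain the contraction factor $15/16$, whereas you use the Taylor lower bound at $x_t$ to get $a \ge \Delta_t + b/4$ and then a case split with AM--GM, yielding the sharper factor $3/4$. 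Both routes are valid and lead to the same $O(\log(\Delta_0/\phi))$ iteration bound.
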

\begin{proof}
Throughout the proof, let $\Phi_t \defeq \E f(x_t) - f(x^\star)$, so our goal is to show $\Phi_T \le \phi$. We first observe that, conditioning on $x_t$, and letting $x^{(s)} \defeq (1 - s)x_t + sx^\star$,
\begin{align*}
\E f(x_{t + 1}) &\le \E \Brack{f(x_t) + \inprod{\nabla f(x_t)}{x_{t + 1} - x_t} + \norm{x_{t + 1} - x_t}_{\ma}^2} \\
&\le \min_{s \in [0, 1]} \Brace{f(x_t) + \inprod{\nabla f(x_t)}{x^{(s)} - x_t} + \norm{x^{(s)} - x_t}_{\ma}^2} + \frac \phi {20} \\
&\le \min_{s \in [0, 1]} \Brace{f(x^{(s)}) + s^2 \norm{ x_t - x^\star}_{\ma}^2} + \frac \phi {20} \\
&\le \min_{s \in [0, 1]} \Brace{f(x^{(s)}) + 4s^2 \Phi_t} + \frac \phi {20}.
\end{align*}
Above, the first line used a second-order Taylor expansion and our assumption $\nabla^2 f(x) \preceq 2\ma$ pointwise, the second line used the definition of $x_{t + 1}$, the third used convexity, and the last used first-order optimality of $x^\star$ as well as our assumption $\half \ma \preceq \nabla^2 f(x)$ pointwise which implies that
\[\frac 1 4 \norm{x_t - x^\star}_{\ma}^2 \le \inprod{\nabla f(x^\star)}{x_t - x^\star} + \frac 1 4 \norm{x_t - x^\star}_{\ma}^2 \le \Phi_t. \]
Subtracting $f(x^\star)$ from both sides and using convexity once more yields
\begin{align*}
\E \Phi_{t + 1} \le \min_{s \in [0, 1]} \Brace{(1 - s)\Phi_t + 4s^2 \Phi_t} + \frac \phi {20} = \frac{15}{16} \Phi_t + \frac \phi {20}. 
\end{align*}
Recursively applying for $T$ iterations, and using $\frac{1}{20} \sum_{i = 0}^{\infty} (\frac{15}{16})^i \leq1$, yields the conclusion.
\end{proof}

Lemma~\ref{lem:quad_newton} and Corollary~\ref{cor:conv_reg_stable} show that to implement a ball optimization oracle for the function $f = \frlbx$ (defined in \eqref{eq:frldef}) over sufficiently small radii, it suffices to implement Line~\ref{line:min_quad} of Algorithm~\ref{alg:quad_newton} logarithmically many times. Concretely, when $\xset = \ball(r)$ and $f = \frlbx$, Line~\ref{line:min_quad} requires a $\Theta(\phi)$-approximate minimizer to a problem of the form
in \eqref{eq:constrained_newton},
for $z \in \ball(r)$ given by the algorithm. These are exactly the problems which our tools in Section~\ref{sec:quadratic} can approximately solve, except they are hard-constrained. We show how to lift the constraints via a regularized binary search in Section~\ref{ssec:binary_search}.

\subsection{Ball optimization oracles via binary search}\label{ssec:binary_search}

In this section, we provide a binary search strategy for approximately solving the constrained optimization problem \eqref{eq:constrained_newton}, by binary searching on a Lagrange multiplier for the constraint. To begin, we require the following claims on the minima of regularized convex functions.

\begin{lemma}\label{lem:reg_minima_properties}
Let $F: \R^d \to \R$ be a twice-differentiable convex function satisfying $\norm{\nabla F(\0_d)} \le L$, and for all $\alpha \in \R_{\ge 0}$, let $\xsa \defeq \argmin_{x \in \R^d} F(x) + \frac \alpha 2 \norm{x}^2$. We have the following claims.
\begin{enumerate}
    \item For all $0 < \alpha < \alpha'$, $\norm{\xsa} > \norm{x^\star_{\alpha'}}$. \label{item:shrinking}
    \item For all $0 < \alpha < \alpha'$, $\norm{\xsa - x^\star_{\alpha'}} \le \norm{\xsa}\log(\frac{\alpha'}{\alpha})$. \label{item:log_shrinking}
    \item If $\alpha \ge \frac{4L}{r}$, $\norm{\xsa} \le \frac r 2$. \label{item:upper_bound}
\end{enumerate}
\end{lemma}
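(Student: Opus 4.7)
My approach is to dispatch the three claims in an order that reflects their increasing dependence on analytic machinery: Claim \ref{item:upper_bound} is a direct consequence of the minimizer inequality, while Claims \ref{item:shrinking} and \ref{item:log_shrinking} both follow from an implicit differentiation of the first-order optimality condition in $\alpha$.

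For Claim \ref{item:upper_bound}, I would observe that since $\xsa$ minimizes $F(x) + \frac \alpha 2 \norm{x}^2$, comparing its value to that at $\0_d$ yields $F(\xsa) - F(\0_d) \le -\frac \alpha 2 \norm{\xsa}^2$. By convexity of $F$, $F(\xsa) - F(\0_d) \ge \inprod{\nabla F(\0_d)}{\xsa} \ge -L\norm{\xsa}$ using Cauchy-Schwarz and the bound $\norm{\nabla F(\0_d)} \le L$. Combining gives $\norm{\xsa} \le \frac{2L}{\alpha}$, and plugging in $\alpha \ge \frac{4L}{r}$ yields $\norm{\xsa} \le \frac r 2$.

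For the remaining two claims, I would first set up the analytic framework. Since $F$ is convex and twice-differentiable, $\nabla^2 F(\xsa) + \alpha \id_d \succeq \alpha \id_d \succ 0$, so the implicit function theorem applies to the first-order optimality condition $\nabla F(\xsa) + \alpha \xsa = \0_d$, giving that $\xsa$ is $C^1$ in $\alpha > 0$ with derivative obtained by differentiating both sides:
\[
\frac{\mathrm{d} \xsa}{\mathrm{d} \alpha} = -\Par{\nabla^2 F(\xsa) + \alpha \id_d}^{-1}\xsa.
\]
For Claim \ref{item:shrinking}, I would compute $\frac{\mathrm{d}}{\mathrm{d} \alpha} \norm{\xsa}^2 = -2 \xsa^\top (\nabla^2 F(\xsa) + \alpha \id_d)^{-1} \xsa$, which is strictly negative unless $\xsa = \0_d$; and $\xsa = \0_d$ forces $\nabla F(\0_d) = \0_d$, making $\0_d$ a global minimizer of $F$ and hence $x^\star_{\alpha'} = \0_d$ for all $\alpha' > 0$ as well (so the claim is used in the paper only in the nontrivial regime where $\xsa \ne \0_d$, in which case $\norm{\xsa}$ is strictly decreasing on any interval containing $\alpha$).

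For Claim \ref{item:log_shrinking}, I would bound $\bigl\lVert \frac{\mathrm{d} \xsa}{\mathrm{d} \alpha} \bigr\rVert \le \normop{(\nabla^2 F(\xsa) + \alpha \id_d)^{-1}} \cdot \norm{\xsa} \le \frac{\norm{\xsa}}{\alpha}$ using $\nabla^2 F(\xsa) + \alpha \id_d \succeq \alpha \id_d$. Applying Claim \ref{item:shrinking} to the inner sub-intervals gives $\norm{x^\star_t} \le \norm{\xsa}$ for all $t \in [\alpha, \alpha']$, so integrating the derivative bound,
\[
\norm{\xsa - x^\star_{\alpha'}} \le \int_{\alpha}^{\alpha'} \Big\lVert \tfrac{\mathrm{d} x^\star_t}{\mathrm{d} t} \Big\rVert \, \mathrm{d} t \le \int_\alpha^{\alpha'} \frac{\norm{x^\star_t}}{t} \, \mathrm{d} t \le \norm{\xsa} \int_\alpha^{\alpha'} \frac{\mathrm{d} t}{t} = \norm{\xsa} \log\Par{\tfrac{\alpha'}{\alpha}},
\]
as desired. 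I anticipate no significant obstacle beyond the minor degenerate-case bookkeeping in Claim \ref{item:shrinking}; the key structural fact is simply that the regularized Hessian $\nabla^2 F(\xsa) + \alpha \id_d$ is uniformly lower-bounded by $\alpha \id_d$, which controls both the direction and the magnitude of the path $\alpha \mapsto \xsa$.
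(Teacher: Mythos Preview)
Your proposal is correct and follows essentially the same approach as the paper: both derive the formula $\tfrac{\mathrm{d}}{\mathrm{d}\alpha}\xsa = -(\nabla^2 F(\xsa)+\alpha\id_d)^{-1}\xsa$ from the first-order optimality condition, use it to show $\norm{\xsa}^2$ is nonincreasing (Claim~\ref{item:shrinking}), integrate the bound $\norm{\tfrac{\mathrm{d}}{\mathrm{d}t}x^\star_t}\le \tfrac{1}{t}\norm{x^\star_t}$ combined with Claim~\ref{item:shrinking} to obtain Claim~\ref{item:log_shrinking}, and compare the regularized objective at $\xsa$ and $\0_d$ together with convexity for Claim~\ref{item:upper_bound}. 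You are in fact slightly more careful than the paper about the degenerate case $\xsa=\0_d$ in Claim~\ref{item:shrinking}.
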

\begin{proof}
The optimality conditions on $\xsa$ show that $\nabla F(\xsa) = -\alpha \xsa$, so differentiating in $\alpha$,
\[\nabla^2 F(\xsa) \Par{\frac{\dd}{\dd \alpha} \xsa} = -\xsa - \alpha \cdot \frac{\dd}{\dd \alpha} \xsa \implies \frac{\dd}{\dd \alpha} \xsa = -\Par{\nabla^2 F(\xsa) + \alpha\id_d}^{-1} \xsa. \]
Therefore, for any $0 < \alpha < \alpha'$, we have Item~\ref{item:shrinking}, as convexity of $F$ shows
\begin{align*}
\half\norm{x^\star_{\alpha'}}^2 - \half\norm{\xsa}^2 = \int_\alpha^{\alpha'} \Par{-\norm{x^\star_t}_{(\nabla^2 F(x^\star_t) + t\id_d)^{-1}}} \dd t \le 0.
\end{align*}
Now, by using the triangle inequality and Item~\ref{item:shrinking}, Item~\ref{item:log_shrinking} follows:
\begin{align*}
\norm{\xsa - x^\star_{\alpha'}} \le \int_\alpha^{\alpha'} \norm{(\nabla^2 F(x^\star_t) + t\id_d)^{-1} x^\star_t} \dd t
\le \int_\alpha^{\alpha'} \frac 1 t \norm{x^\star_t} \dd t \le \norm{\xsa}\log\Par{\frac{\alpha'}{\alpha}}.
\end{align*}
Finally, to see Item~\ref{item:upper_bound}, note that for $\alpha \ge \frac{4L}{r}$,
\begin{align*}
F(\0_d) \ge F(\xsa) + \frac{\alpha}{2}\norm{\xsa}^2 
\ge F(\0_d) + \inprod{\nabla F(\0_d)}{\xsa} + \frac{\alpha}{2}\norm{\xsa}^2 
\ge F(\0_d) - L\norm{\xsa} + \frac{\alpha}{2}\norm{\xsa}^2.
\end{align*}
Rearranging and applying our lower bound on $\alpha$ yields $\norm{\xsa} \le \frac r 2$ as claimed.
\end{proof}

In light of Lemma~\ref{lem:reg_minima_properties}, in the remainder of the section we fix a differentiable convex function $F$, and develop a generic framework for approximately solving, for a parameter $\lam > 0$,
\[\argmin_{x \in \ball(r)} F(x) + \lam \norm{x}^2.\]
We follow the notation \eqref{eq:xsadef} throughout for brevity, so the above minimizer is denoted $x^\star_{2\lam}$. 
For convenience, for any $t \in [0, \norms{x^\star_{2\lam}}]$, we also use $\alpha_t$ to denote the unique value of $\alpha \in [2\lam, \infty)$ such that $\norms{x^\star_{\alpha_t}} = t$, where uniqueness and existence follows from Lemma~\ref{lem:reg_minima_properties} and $x^\star_\infty = \0_d$. 

At the end of Section~\ref{ssec:combine}, we gave implementations of a phase-one oracle and a phase-two oracle for $F$ in \eqref{eq:specific_f} (see Lemmas~\ref{lem:oracle_one_impl} and~\ref{lem:oracle_two_impl}). We now apply Definitions~\ref{def:phase_one} and~\ref{def:phase_two} to implement our binary search, stated formally in the following and with pseudocode provided in Algorithm~\ref{alg:binary}.

\newcommand{\BinarySearch}{\textsf{BinarySearch}}
\begin{algorithm2e}[h]
	\caption{$\BinarySearch(\lambda, r, \Delta, L, \mathcal{O}_1, \mathcal{O}_2)$}
	\label{alg:binary}
	\DontPrintSemicolon
		\codeInput $\lam, r, \Delta, L \in \R_{> 0}$, $\oracle_1$, an $(r, 2\lam)$-phase-one oracle (Definition~\ref{def:phase_one}) for differentiable convex $f: \R^d \to \R$ satisfying $\norm{f(\0_d)} \le L$, $\oracle_2$, a $(\frac \Delta 2, r, \max(\alpha_{3r}, 2\lam))$-phase-two oracle for $f$\;
		\tcp{Start phase one.}
        $u \gets 2\lam$\; \label{line:phase1_start}
		\lWhile{$\norm{\mathcal{O}_1(u)} > 2.5 r$}{
		$u \gets 2u$ \label{line:phase0}
		} 
        \lIf{$u = 2\lambda$}{
        $\alpha' \gets 2\lam$ \label{line:phase1early}
        }
        \uElse{
        $\ell \gets \frac u 2$\;\label{line:ul_init}
        \While{$\mathbf{true}$ \label{line:phase1}}{
        $m \gets \sqrt{u\ell}$\;
        \lIf{$\norm{\mathcal{O}_1(m)} \in [2.1r, 2.9 r]$}{
        $\alpha' \gets m$ and \textbf{break} \label{line:phase1end}
        }
        \lElseIf{$\norm{\mathcal{O}_1(m)} > 2.9r$}{
        $\ell \gets m$
        }
        \lElse{
        $u \gets m$
        }
        }
        }
\BlankLine
\tcp{Start phase two.}
$\ell \gets \alpha'$\label{line:phase2start}\;
$u \gets \frac{4L}{r} + 2\lambda$\;
\While{$\frac{u}{\ell} > 1+\frac{\Delta}{10(Lr + \lambda r^2)}$ \label{line:phase2}}{
$m \gets \sqrt{u\ell}$\;
\lIf{$\norm{\mathcal{O}_2(m)} > r$}{
$\ell \gets m$
}
\lElse{
$u \gets m$
}
}
$x_1 \gets \mathcal{O}_1(\ell)$, $x_2 \gets \mathcal{O}_2(u)$\;
\lIf{$\ell = 2\lambda$}{
\codeReturn $x_1$
}
\codeReturn $x_{\text{out}} \gets (1-t)x_1 + t x_2$, where $t \in [0, 1]$ is chosen so $\norm{x_{\text{out}}} = r$\label{line:xout}
\end{algorithm2e}

\begin{proposition}
\label{prop:binsearch}
Let $F$ be a differentiable convex function satisfying $\norm{\nabla F(\0_d)} \leq L$. Let $\lam, \Delta, r \in \R_{> 0}$ with $\Delta \le \frac{\lam r^2}{100}$. Algorithm~\ref{alg:binary} computes $x \in \ball(r)$ satisfying
\[
F(x) + \lambda \norm{x}^2 \leq \min_{\norm{y} \leq r} F(y) + \lambda \norm{y}^2 + \Delta.
\]
Algorithm~\ref{alg:binary} makes at most $O(\log \frac{L}{\lam r})$ calls to $\oracle_1$, and $O(\log \frac{Lr + \lam r^2}{\Delta})$ calls to $\oracle_2$.
\end{proposition}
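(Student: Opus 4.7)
The plan is to analyze Algorithm~\ref{alg:binary} in three pieces: the Phase~1 bracketing of a regularization level at which the unconstrained minimizer just exceeds the constraint, the Phase~2 geometric binary search on the Lagrange multiplier, and the correctness of the final interpolated point $x_{\text{out}}$.

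\textbf{Phase~1.} I would first argue the initial doubling loop terminates within $O(\log(L/(\lambda r)))$ steps: by Item~3 of Lemma~\ref{lem:reg_minima_properties}, once $u \ge 4L/r$ we have $\norm{x^\star_u} \le r/2$, so Definition~\ref{def:phase_one} forces $\norm{\oracle_1(u)} \le r/2 + (r + r/2)/100 < 2.5r$ and the loop exits. The subsequent geometric binary search maintains $\norm{\oracle_1(\ell)} > 2.5r$ and $\norm{\oracle_1(u)} \le 2.5r$, which via Definition~\ref{def:phase_one} gives $\norm{x^\star_\ell} \gtrsim 2.46r$ and $\norm{x^\star_u} \lesssim 2.54r$. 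By monotonicity of $\norm{x^\star_\alpha}$ in $\alpha$ (Item~1 of Lemma~\ref{lem:reg_minima_properties}), $\norm{x^\star_m}$ at $m = \sqrt{u\ell}$ lies between these endpoint values, and after $O(1)$ further iterations Item~2 of that lemma shrinks the gap $\norm{x^\star_\ell} - \norm{x^\star_u}$ to well inside the target window $[2.1r, 2.9r]$. Phase~1 therefore produces $\alpha'$ with either $\alpha' = 2\lambda$ and $\norm{x^\star_{2\lambda}} \le 3r$, or $\norm{x^\star_{\alpha'}} \in [2r, 3r]$ up to constants, using $O(\log(L/(\lambda r)))$ calls to $\oracle_1$ in total.

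\textbf{Phase~2.} Let $\alpha^\star \ge 2\lambda$ be the KKT Lagrange multiplier of the constrained problem, so either $\alpha^\star = 2\lambda$ with $\norm{x^\star_{2\lambda}} \le r$ (and $y^\star = x^\star_{2\lambda}$), or $\norm{x^\star_{\alpha^\star}} = r$ (and $y^\star = x^\star_{\alpha^\star}$), where $y^\star \defeq \argmin_{\norm{y} \le r}G(y)$ for $G(x) \defeq F(x) + \lambda\norm{x}^2$. In the nontrivial case $\alpha^\star$ lies in the initial bracket $[\alpha', 4L/r + 2\lambda]$: the lower end by $\norm{x^\star_{\alpha'}} > r$ and monotonicity, the upper end by Item~3 of Lemma~\ref{lem:reg_minima_properties}. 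The $\oracle_2$-based binary search preserves this bracket up to a strong-convexity error of order $\sqrt{\Delta/\alpha}$, which is $\ll r$ under $\Delta \le \lambda r^2/100$ and $\alpha \ge 2\lambda$, hence absorbable into constants. Each iteration replaces $u/\ell$ by $\sqrt{u/\ell}$, so starting from ratio $O(L/(\lambda r))$ and stopping at $u/\ell \le 1 + \delta$ with $\delta \defeq \Delta/(10(Lr + \lambda r^2))$ requires $O(\log\log(L/(\lambda r)) + \log(1/\delta)) = O(\log((Lr + \lambda r^2)/\Delta))$ iterations, matching the claimed $\oracle_2$-call count.

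\textbf{Correctness of $x_{\text{out}}$.} The key identity is Lagrangian duality: letting $D(\alpha) \defeq \min_x F_\alpha(x) - \frac{\alpha - 2\lambda}{2} r^2$ denote the Lagrange dual of the primal $\min_{\norm{y} \le r} G(y)$, weak duality gives $D(\alpha) \le G^\star \defeq \min_{\norm{y}\le r} G(y)$ for all $\alpha \ge 2\lambda$, and a direct calculation using $G(x) = F_\alpha(x) - \frac{\alpha-2\lambda}{2}\norm{x}^2$ shows that for any $x$ with $\norm{x} = r$,
\[G(x) - G^\star \;\le\; G(x) - D(\alpha) \;=\; F_\alpha(x) - F_\alpha(x^\star_\alpha).\]
Applying this to $x = x_{\text{out}}$ (which has $\norm{x_{\text{out}}} = r$ by Line~\ref{line:xout}) with $\alpha = u$, and using convexity of $F_u$ along the $x_1$-$x_2$ segment, yields
\[G(x_{\text{out}}) - G^\star \;\le\; (1-t)\bigl(F_u(x_1) - F_u(x^\star_u)\bigr) + t\bigl(F_u(x_2) - F_u(x^\star_u)\bigr).\]
Definition~\ref{def:phase_two} bounds the $x_2$-term by $\Delta/2$. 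The main obstacle is controlling the $x_1$-term, since $\oracle_1$ only guarantees norm-closeness and not value-closeness. I would split $F_u = F_\ell + \frac{u-\ell}{2}\norm{\cdot}^2$, use Item~2 of Lemma~\ref{lem:reg_minima_properties} with the tight Phase~2 ratio $u/\ell \le 1 + \delta$ to bound $\norm{x^\star_\ell - x^\star_u} \lesssim r\delta$, and exploit the quadratic structure of the specific $F$ in \eqref{eq:specific_f} (whose Hessian is bounded by that of $\nabla^2 f_\rho$) to convert the norm approximation $\norm{x_1 - x^\star_\ell} \le (r + \norm{x^\star_\ell})/100$ into a value error of order $(Lr + \lambda r^2)\delta = \Delta/10$. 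The degenerate branch $\ell = 2\lambda$ (returning $x_1$ directly) is handled separately, using that in this case the unconstrained minimizer $x^\star_{2\lambda}$ is feasible, so the phase-one norm guarantee together with $2\lambda$-strong convexity of $G$ suffices.
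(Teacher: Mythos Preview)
Your Phase~1 and Phase~2 iteration counts are essentially correct and match the paper's reasoning. The genuine gap is in your correctness argument for the interpolated output, specifically the $x_1$-term.

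You take the pseudocode line $x_1 \gets \oracle_1(\ell)$ at face value and then, correctly, flag that ``the main obstacle is controlling the $x_1$-term, since $\oracle_1$ only guarantees norm-closeness.'' But your proposed fix does not work. The phase-one oracle error is $\norm{x_1 - x^\star_\ell} \le (r+\norm{x^\star_\ell})/100 = \Theta(r)$; this quantity is a fixed fraction of $r$ and has nothing to do with $\delta = \Delta/(10(Lr+\lambda r^2))$. Even granting the quadratic structure of \eqref{eq:specific_f} and the Hessian bound $\normop{\nabla^2 F_\ell} = O(L/\rho + \ell) = O(L/r + \lambda)$, you only get
\[
F_\ell(x_1) - F_\ell(x^\star_\ell) \;=\; O\Par{(L/r + \lambda)\cdot r^2} \;=\; O(Lr + \lambda r^2),
\]
which is the full range of the objective over $\ball(3r)$, not $O(\Delta)$. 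The bound $\norm{x^\star_\ell - x^\star_u} \lesssim r\delta$ from Item~\ref{item:log_shrinking} that you invoke concerns a different pair of points and cannot repair this. Separately, appealing to \eqref{eq:specific_f} would only establish the proposition for that particular $F$, whereas the statement is for arbitrary differentiable convex $F$ with $\norm{\nabla F(\0_d)} \le L$.

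The paper avoids the obstacle by (despite the pseudocode) treating $x_1$ as $\oracle_2(\ell)$, namely the cached output from the last Phase~2 iteration that set $\ell$; the proof explicitly stipulates that repeated oracle calls with the same $\alpha$ return the same point. Both endpoints then carry a $\tfrac{\Delta}{2}$ \emph{function-value} guarantee, and one simply averages: $(1-t)\bigl[F(x_1)+\tfrac{\ell}{2}\norm{x_1}^2\bigr] + t\bigl[F(x_2)+\tfrac{u}{2}\norm{x_2}^2\bigr] \le F(y) + \tfrac{u}{2}\norm{y}^2 + \tfrac{\Delta}{2}$ for every $y$, passes to $F(\xout)+\tfrac{\ell}{2}\norm{\xout}^2$ by convexity, argues that in Case~2 the constrained minimizer must satisfy $\norm{y}=r=\norm{\xout}$, and absorbs the $\ell$-versus-$u$ mismatch as $\tfrac{(u-\ell)r^2}{2} \le \tfrac{\Delta}{2}$. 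No Lagrangian dual and no structural assumption on $F$ are needed. Your degenerate branch ($\ell = 2\lambda$) has the same issue: the paper again relies on an $\oracle_2$ value guarantee, not on $\oracle_1$ plus strong convexity.
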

\begin{proof}
We start with a correctness proof, and bound the number of oracle calls at the end. Because the specifications of $\oracle_1$ and $\oracle_2$ do not preclude returning different answers on multiple calls with the same $\alpha$, throughout the proof to alleviate burdensome notation, we assume that if an oracle is called twice with the same $\alpha$, it gives the same result (e.g., the result of the first call).

We begin by analyzing the first phase of the algorithm, starting from Line~\ref{line:phase1_start} and ending before Line~\ref{line:phase2start}. By the criterion in the while loop on~\cref{line:phase0}, $u$ satisfied $\norm{\oracle_1(u)} \leq 2.5 r$, so
\[
\norm{\mathcal{O}_1(u) - x^\star_{u}} \leq \frac{1}{100} ( r + \norm{x^\star_{u}}),
\]
which implies that for the value of $u$ after the while loop ends,
\[
\norm{x^\star_{u}} - 2.5 r \leq \norm{x^\star_{u}} - \norm{\mathcal{O}_1(u)} \leq \frac{1}{100} ( r + \norm{x^\star_{u}}) \implies \norm{x^\star_{u}} \leq \frac{100}{99} \cdot (2.51 r) < 3r.
\]
Next, we claim that at the conclusion of phase one, either $\alpha' = 2\lam$ and $\oracle_1(2\lam) \le 2.5r$, or $\alpha'$ has
\begin{equation}\label{eq:alphaprime}\norm{x^\star_{\alpha'}} \in \Brack{2r, 3r}.\end{equation}
The first case is obvious from Line~\ref{line:phase1early}. Otherwise, for the values of $\ell, u$ on Line~\ref{line:ul_init}, we have $\ell \ge 2\lam$ and $\norm{\oracle_1(u)} \le 2.5r < \norm{\oracle_1(\ell)}$. When the while loop breaks on Line~\ref{line:phase1end}, we have $\norm{\oracle_1(m)} \in [2.1r, 2.9r]$, which yields \eqref{eq:alphaprime} (since $\alpha' = m$ in this case) due to the following derivations:
\begin{align*}
\norm{x^\star_m} - 2.9r \le \norm{x^\star_m - \oracle_1(m)} \le \frac 1 {100}\Par{r + \norm{x^\star_m}} \implies \norm{x^\star_m} \le \frac{100}{99} \cdot (2.91r) < 3r    ,\\
2.1r - \norm{x^\star_m} \le \norm{x^\star_m - \oracle_1(m)} \le \frac 1 {100}\Par{r + \norm{x^\star_m}} \implies \norm{x^\star_m} \ge \frac{100}{101} \cdot (2.09r) > 2r   .
\end{align*}
Both bounds used the triangle inequality. This concludes our correctness analysis of Phase 1.

We now analyze correctness of phase two. By Item~\ref{item:upper_bound} of Lemma~\ref{lem:reg_minima_properties}, we have $\norm{\oracle_2(u)} \le \norm{x^\star_u} + \norm{x^\star_u - \oracle_2(u)} \le r$ on Line~\ref{line:phase2start}, where we used strong convexity of $F(x) + \frac \alpha 2 \norm{x}^2$ to bound $\norm{x^\star_u - \oracle(u)} \le \frac r 2$. Thus, inspecting the while loop starting on Line~\ref{line:phase2}, we preserve the invariants:
\[\ell < u,\; \norm{\oracle_2(u)} \le r,\text{ and either } \norm{\oracle_2(\ell)} > r, \text{ or } \ell = 2\lam.\]
In particular, if $\norm{\oracle_2(\ell)} \le r$, it must be that $\alpha' = \ell$ (i.e.\ $\ell$ never updated), but if $\alpha' \neq 2\lam$ then this is impossible by \eqref{eq:alphaprime}. 
Hence, when the while loop on \cref{line:phase2} terminates, the values $\ell, u$ associated with $x_1$, $x_2$ satisfy $u \in [\ell, (1 + \frac{\Delta}{10(Lr + \lam r^2)})\ell]$, $\norm{x_2} \le r$, and we are in one of the following cases.
\begin{enumerate}
    \item $\ell = 2\lam$ and $\oracle_2(u) \le r$.\label{item:case1}
    \item $x_1 = \oracle_2(\ell)$ has $\norm{x_1} > r$.\label{item:case2}
\end{enumerate}
In Case~\ref{item:case1}, let $y \defeq \argmin_{y \in \ball(r)} F(y) + \lam \norm{y}^2$. Then by the guarantees of $\oracle_2$,
\begin{align*}
F(x_1) + \lam \norm{x_1}^2 &\le F(x_1) + \frac u 2 \norm{x_1}^2 \le F(y) + \frac u 2 \norm{y}^2 + \frac \Delta 2 \\
&\le F(y) + \lam\norm{y}^2 + \Par{\frac u 2 - \lam}r^2 + \frac \Delta 2 \le F(y) + \lam\norm{y}^2 + \Delta,
\end{align*}
where we used $\frac u 2 - \lam \le \frac{\Delta}{10\lam r^2} \cdot \frac \lam 2 \le \frac \Delta {2r^2}$. On the other hand, in Case~\ref{item:case2},
recalling $\norm{x^\star_{\ell}}, \norm{x^\star_{u}} \leq 3r$ by the guarantees of phase one, and letting $\xout \defeq (1 - t)x_1 + t x_2$ as in Line~\ref{line:xout},
\[
(1-t) F(x_1) + t F(x_2) + \frac{(1-t) \ell} 2 \norm{x_1}^2 + \frac{tu} 2 \norm{x_2}^2 \leq F(y) + \frac{u} 2 \norm{y}^2 + \frac \Delta 2,
\]
for every $y \in \R^d$, by the definition of $\oracle_2$. Now, letting $y \defeq \argmin_{y \in \ball(r)} F(y) + \lam\norm{y}^2$,
\begin{align*}
F(\xout) + \frac \ell 2 \norm{\xout}^2 &\leq (1-t) F(x_1) + t F(x_2) +  \frac{(1-t) \ell} 2 \norm{x_1}^2 + \frac{tu} 2 \norm{x_2}^2 \\
&\leq F(y) + \frac u 2 \norm{y}^2 + \frac \Delta 2.
\end{align*}
Additionally, note that if we are in Case~\ref{item:case2}, then $\norm{y} = r$. To see this, suppose for contradiction that $\norm{y} < r$, which means $\norms{x^\star_{2\lam}} < r$. If $\alpha' > 2\lam$, then \eqref{eq:alphaprime} and Item~\ref{item:shrinking} of Lemma~\ref{lem:reg_minima_properties} give a contradiction. Otherwise, $\alpha' = 2\lam$, but then $\norms{x^\star_{2\lam}} < r$ contradicts the statement before \eqref{eq:alphaprime} since $\oracle_1(2\lam) \le 2.5r$ cannot happen. Hence, $\norm{y} = \norm{\xout} = r$, and correctness in Case~\ref{item:case2} follows from
\begin{align*}
F(\xout) + \lam\norm{\xout}^2 &= F(\xout) + \lam\norm{y}^2 \\
&\le F(y) + \lam\norm{y}^2 + \frac{(u - \ell) r^2} 2 + \frac \Delta 2 \le F(y) + \lam\norm{y}^2 + \Delta,
\end{align*}
where $u - \ell \le \frac{\Delta}{10(Lr + \lam r^2)} \cdot u \le \frac{\Delta}{r^2}$, since $u \le \frac{4L}{r} + 2\lam$. This completes the correctness proof.

We now bound the number of calls to $\mathcal{O}_1, \mathcal{O}_2$. By Item~\ref{item:upper_bound} of Lemma~\ref{lem:reg_minima_properties} and \eqref{eq:alphaprime}, it is clear the number of times Line~\ref{line:phase0} occurs is $O(\log \frac{L}{\lam r})$. Next, consider the loop on \Cref{line:phase1} until  \Cref{line:phase1end} is hit. We claim the loop must break if $\log\frac u \ell \le \frac 1 {100}$, which means the loop can only run $O(1)$ times, because $\log \frac u \ell$ halves each time the loop is run, and $\frac u \ell = 2$ initially. To see our claim, for any $\alpha$,
\begin{equation}\label{eq:alphabound}
\begin{aligned}
\norm{\oracle_1(\alpha)} - \norm{\xsa} \le \frac {r + \norm{\xsa}} {100} &\implies \frac{100}{101}\norm{\oracle_1(\alpha)} - \frac r {101} \le \norm{\xsa}, \\
\norm{\xsa} - \norm{\oracle_1(\alpha)} \le \frac {r + \norm{\xsa}} {100} &\implies \frac{100}{99}\norm{\oracle_1(\alpha)} + \frac r {99} \ge \norm{\xsa},
\end{aligned}
\end{equation}
which follow from the definition of $\oracle_1$. Further, by Item~\ref{item:log_shrinking} of Lemma~\ref{lem:reg_minima_properties}, supposing $\log\frac u \ell \le \frac 1 {100}$,
\[\norm{x^\star_u - x^\star_\ell} \le \frac {\norm{x^\star_\ell}} {100} \le \frac 1 {100} \Par{\frac{100}{99}\norm{\oracle_1(\ell)} + \frac r {99}},\]
where we used the second bound in \eqref{eq:alphabound}. Combining with \eqref{eq:alphabound}, we have
\begin{align*}
\frac 1 {100} \Par{\frac{100}{99}\norm{\oracle_1(\ell)} + \frac r {99}} &\ge \norm{x^\star_\ell} - \norm{x^\star_u} \ge \Par{\frac{100}{101}\norm{\oracle_1(\ell)} - \frac r {101}} - \Par{\frac{100}{99}\norm{\oracle_1(u)} + \frac r {99}} \\
\implies \frac{100}{99}\norm{\oracle_1(u)} + \frac r {33} &\ge \Par{\frac{100}{101} - \frac 1 {99}}\norm{\oracle_1(\ell)},
\end{align*}
which is a contradiction since $\norm{\oracle_1(u)} < 2.1r$ and $\norm{\oracle_1(\ell)} > 2.9r$ until termination.

Finally, consider the loop starting on Line~\ref{line:phase2}. At the beginning, we have $\frac u \ell = O(\frac{L}{\lam r} + 1)$, and $\log \frac u \ell$ halves each time the loop is run. Therefore, $\oracle_2$ is called $O(\frac{Lr + \lam r^2}{\Delta})$ times as claimed.
\end{proof}

We now combine Proposition~\ref{prop:binsearch} with Lemmas~\ref{lem:oracle_one_impl} and~\ref{lem:oracle_two_impl} to give our parallel ball optimization oracle.

\begin{proposition}\label{prop:parallel_ball_oracle}
Define $f_\rho$ as in Definition~\ref{def:gconv}, where $f$ is in the setting of Problem~\ref{prob:sco}. Let $\lam, r \in \R_{> 0}$ satisfy $r \le \frac{\rho}{6} \cdot \log^{-\half}(\frac{2L}{\lam\rho})$ and $\rho \le \frac L \lam$. For any $\phi \in (0, \frac{\lam r^2}{100}]$, we can implement a $(\phi, \lam, r)$-ball optimization oracle (Definition~\ref{def:boo}) for $f_\rho$ with
\begin{align*}
O\Par{\log\Par{\frac{Lr}{\phi}}\log\log\Par{\frac{Lr}{\phi}} \cdot \dQuery + \frac{\lam r^2}{\phi}\log^4\Par{\frac{L^2}{\lam\phi}}\log\Par{\frac{dL^2}{\lam\phi}}}\text{ depth,} \\
\text{and } O\Par{\frac{L^2}{\lam\phi}\log^5\Par{\frac{L^2}{\lam\phi}}\cdot \tQuery + d\log^5\Par{\frac{L^2}{\lam \phi}} \cdot \frac{\lam r^2}{\phi} \cdot \Par{\frac{L^2}{\lam^2 r^2}}^{\omega - 1}} \text{ work.} 
\end{align*}
\end{proposition}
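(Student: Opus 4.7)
\textbf{Proof plan for Proposition~\ref{prop:parallel_ball_oracle}.}
The plan is to combine three ingredients in series: (i) the approximate constrained Newton method of Algorithm~\ref{alg:quad_newton} to reduce ball optimization of $\frlbx$ to a logarithmic number of quadratic subproblems, (ii) the binary search of Proposition~\ref{prop:binsearch} to replace each constrained subproblem with a short sequence of regularized unconstrained problems, and (iii) the concrete phase-one and phase-two solvers of Lemmas~\ref{lem:oracle_one_impl} and~\ref{lem:oracle_two_impl} to actually discharge each unconstrained call.

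First, shift coordinates so $\bx = \0_d$ and set $\ma \defeq \nabla^2 f_\rho(\0_d) + \lam \id_d = \nabla^2 \frlbx(\0_d)$. Under the stated hypothesis $r \le \tfrac{\rho}{6}\log^{-1/2}(\tfrac{2L}{\lam\rho})$, Corollary~\ref{cor:conv_reg_stable} gives $\tfrac 1 2 \ma \preceq \nabla^2 \frlbx(y) \preceq 2\ma$ for all $y \in \ball(r)$, so the Hessian-stability hypothesis of Lemma~\ref{lem:quad_newton} is satisfied. Starting Newton at $x_0 = \0_d$, the initial gap $\frlbx(\0_d) - \min_{\ball(r)} \frlbx$ is bounded by $O(Lr)$ using $L$-Lipschitzness of $f_\rho$ together with $\lam r \le \lam \rho \le L$ (from $\rho \le L/\lam$ and $r \le \rho$). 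Thus Lemma~\ref{lem:quad_newton} outputs an expected $\phi$-approximate minimizer using $T = O(\log \tfrac{Lr}{\phi})$ Newton iterations, provided each subproblem at Line~\ref{line:min_quad} is solved to expected additive error $\phi/20$.

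Second, each such Newton subproblem, by the computation preceding~\eqref{eq:constrained_newton}, has the form $\min_{x \in \ball(r)} F(x) + \lam \norm{x}^2$ with $F$ given by~\eqref{eq:specific_f} at some $z \in \ball(r)$. I invoke Proposition~\ref{prop:binsearch} with $\Delta = \phi/20$, which is $\le \lam r^2/100$ since $\phi \le \lam r^2/100$ by hypothesis. This issues $O(\log \tfrac{L}{\lam r})$ phase-one queries and $O(\log \tfrac{L^2}{\lam \phi})$ phase-two queries (bounding $Lr + \lam r^2 = O(Lr) = O(L^2/\lam)$ via $\lam r \le L$), which I instantiate by Lemmas~\ref{lem:oracle_one_impl} and~\ref{lem:oracle_two_impl} with per-oracle failure probability $\delta_0$. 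Conditioned on all $N = O(\log \tfrac{Lr}{\phi} \cdot \log \tfrac{L^2}{\lam \phi})$ oracle calls succeeding (probability $\ge 1 - N\delta_0$), the output of each Newton step is worst-case $\phi/20$-suboptimal, which suffices in expectation. On the complementary failure event the per-step gap is crudely bounded by the $O(Lr)$ diameter gap, so choosing $\delta_0 = \Theta(\phi / (N \cdot T \cdot Lr))$ makes the failure contribution $O(\phi)$ over all $T$ Newton iterations; crucially this forces only $\log(1/\delta_0) = O(\log \tfrac{Lr}{\phi})$ and thus $\log\log(1/\delta_0) = O(\log\log \tfrac{Lr}{\phi})$, matching the $\log\log$ factor in the claimed depth.

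Finally, the stated depth and work follow by multiplying the per-call depth and work from Lemmas~\ref{lem:oracle_one_impl} and~\ref{lem:oracle_two_impl}, evaluated at $\Delta = \phi/20$ and $\delta = \delta_0$, by the $T \cdot O(\log \tfrac{L^2}{\lam\phi})$ sequential oracle calls, and then consolidating polylogarithmic factors using $\lam r \le L \le L^2/(\lam r)$. The principal obstacle is not conceptual but combinatorial bookkeeping: verifying that the union-bound inflation and the failure-path correction to the expectation both fit inside the $\log$ and $\log\log$ structure of the stated bounds, and that the several logarithmic scales appearing ($\log \tfrac{Lr}{\phi}$, $\log \tfrac{L^2}{\lam\phi}$, $\log \tfrac{L}{\lam r}$) all collapse to the stated form under the hypotheses $\rho \le L/\lam$, $r \le \rho$, and $\phi \le \lam r^2/100$.
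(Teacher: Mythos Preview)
Your proposal is correct and follows essentially the same approach as the paper: reduce via Corollary~\ref{cor:conv_reg_stable} and Lemma~\ref{lem:quad_newton} to $T = O(\log\tfrac{Lr}{\phi})$ constrained quadratic subproblems, solve each via Proposition~\ref{prop:binsearch}, and implement the resulting $\oracle_1,\oracle_2$ calls using Lemmas~\ref{lem:oracle_one_impl} and~\ref{lem:oracle_two_impl} with a union-bounded per-call failure probability so that the expected failure contribution is $O(\phi)$. The only cosmetic differences are that the paper takes $\Delta = \phi/40$ rather than $\phi/20$, sets the global failure budget to $\delta = \tfrac{\phi}{2Lr+\lam r^2}$ up front (phrasing the argument as ``suboptimality $\le \phi/2$ except with probability $\delta$'' at the level of the whole ball oracle rather than per Newton step), and allocates $\delta/Z$ per oracle call rather than your $\phi/(N\cdot T\cdot Lr)$; these yield the same $\log$ and $\log\log$ scales after simplification.
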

\begin{proof}
Throughout, assume $\bx = \0_d$ in the definition of the ball optimization oracle, which is without loss of generality because shifting by a constant vector does not affect the assumptions in Problem~\ref{prob:sco}. Also, define $\frlbx$ as in \eqref{eq:frldef}, where $\bx = \0_d$. We give an algorithm which always returns a point $x$ in $\ball(r)$, and such that $x$ has suboptimality gap $\frac \phi 2$, except with probability $\delta \defeq \frac{\phi}{2Lr + \lam r^2}$. Because $f$ is $L$-Lipschitz by Jensen's inequality on the moment bound in Problem~\ref{prob:sco}, so is $f_\rho$ by Fact~\ref{fact:gconv}. Therefore the range of $\frlbx$ over $\ball(r)$ is at most $Lr + \lam r^2$, and the expected suboptimality gap is
\[(1 - \delta) \cdot \frac \phi 2 + \delta \cdot \Par{Lr + \frac {\lam r^2} 2} \le \phi,\]
as required. To implement this algorithm, we first apply Lemma~\ref{lem:quad_newton} and Corollary~\ref{cor:conv_reg_stable}, which show that it suffices to solve $T = O(\log\frac{Lr + \lam r^2}{\phi}) = O(\log \frac{Lr} \phi)$ problems of the form, for some $z \in \ball(r)$,
\[\underbrace{\inprod{\nabla f_\rho(z) - \lam z}{x} + \norm{x - z}^2_{\nabla^2 f_\rho(\0_d)}}_{\defeq F(x)} + \lam\norm{x}^2,\]
each to error $\Delta \defeq \frac \phi {40}$ (see \eqref{eq:constrained_newton} for the derivation). Note that 
\[\norm{\nabla F(\0_d)} = \norm{\nabla f_\rho(z) - \lam z - 2\nabla^2 f_\rho(\0_d) z} \le 2L + \lam r,\]
because $f_\rho$ is $L$-Lipschitz and $\frac L \rho$-smooth (Fact~\ref{fact:gconv}). Finally, let $Z \defeq O(\log \frac{Lr}{\phi})$ be the total number of oracle calls to $\oracle_1$ or $\oracle_2$ used by Proposition~\ref{prop:binsearch}. We implement each oracle using either Lemma~\ref{lem:oracle_one_impl} or Lemma~\ref{lem:oracle_two_impl} appropriately, with failure probability set to $\frac \delta Z$, and the conclusion follows.
\end{proof}

We also note that we can achieve a computational depth-complexity tradeoff in Proposition~\ref{prop:parallel_ball_oracle} by choosing different values of $S$ in Corollary~\ref{cor:impl_sgd}, than was used in Lemma~\ref{lem:oracle_two_impl}. As stated, Lemma~\ref{lem:oracle_two_impl} uses Corollary~\ref{cor:impl_sgd} by applying our parallel implementation $S$ iterations at a time, where $S = \frac{L^2}{\alpha\lam r^2}$ is the number of iterations required to achieve $\approx \lam r^2$ error. By instead choosing a larger error $C \cdot \lam r^2$ for a parameter $C \in [1, \frac{L^2}{\lam^2 r^2}]$, which induces $S = \frac{L^2}{C\alpha\lam r^2}$, we can obtain improved total work bounds at the cost of larger computational depth. In particular, Corollary~\ref{cor:impl_sgd} has a computational depth scaling linearly in the parameter $C$, and a computational complexity scaling as $C^{2 - \omega}$; all logarithmic terms are unnaffected, since $C \le \frac{L^2}{\lam^2 r^2}$. We summarize this observation in the following.

\begin{corollary}\label{cor:tradeoff_ball_oracle}
In the context of Proposition~\ref{prop:parallel_ball_oracle}, for any $C \in [1, \frac{L^2}{\lam^2 r^2}]$, we can implement a $(\phi, \lam, r)$-ball optimization oracle (Definition~\ref{def:boo}) for $f_\rho$ with
\begin{gather*}
O\Par{\log\Par{\frac{Lr}{\phi}}\log\log\Par{\frac{Lr}{\phi}} \cdot \dQuery + \frac{C\lam r^2}{\phi}\log^4\Par{\frac{L^2}{\lam\phi}}\log\Par{\frac{dL^2}{\lam\phi}}}\text{ depth,} \\
\text{and } O\Par{\frac{L^2}{\lam\phi}\log^5\Par{\frac{L^2}{\lam\phi}} \cdot \tQuery + d\log^5\Par{\frac{L^2}{\lam \phi}} \cdot \frac{\lam r^2}{\phi} \cdot \Par{\frac{L^2}{\lam^2 r^2}}^{\omega - 1} \cdot \Par{\frac 1 C}^{\omega - 2}} \text{ work.} 
\end{gather*}
\end{corollary}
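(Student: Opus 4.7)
The plan is to mirror the proof of Proposition~\ref{prop:parallel_ball_oracle} and expose the single degree of freedom that drives the tradeoff: the batch parameter $S$ passed to Corollary~\ref{cor:impl_sgd} inside the implementation of the phase-two oracle (Lemma~\ref{lem:oracle_two_impl}). That lemma runs a fixed number of SGD iterations $T = O\Par{\frac{L^2}{\alpha\Delta}\log\Par{\frac{L^2}{\alpha\Delta}}}$, but is free to choose how finely to slice them when invoking Corollary~\ref{cor:impl_sgd}. The proof as written implicitly takes $S$ equal to the number of iterations at which SGD reaches its natural ball-oracle accuracy $\lam r^2$, namely $S \approx \frac{L^2}{\alpha \lam r^2}$. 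I will instead take $S = \frac{L^2}{C\alpha\lam r^2}$, i.e., a batch size corresponding to target error $C\cdot\lam r^2$ per block; the constraint $C \le \frac{L^2}{\lam^2 r^2}$ is precisely what keeps $S \ge 1$ across the $\alpha$-range probed by the binary search of Proposition~\ref{prop:binsearch}.

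Inserting this $S$ into the work/depth bounds of Corollary~\ref{cor:impl_sgd} does the accounting for me. The number of batches $T/S$ is multiplied by exactly $C$ relative to the baseline, so the computational-depth contribution from Lemma~\ref{lem:oracle_two_impl} picks up a factor of $C$; this is the $\frac{C\lam r^2}{\phi}\log^4(\cdot)\log(\cdot)$ term in the corollary. Meanwhile $dT S^{\omega-2}$ rescales by $\Par{1/C}^{\omega-2}$, which is the $\Par{1/C}^{\omega-2}$ factor in the work. The per-iteration query complexity $T\cdot\tQuery$ and per-batch query depth $O(\dQuery)$ are unchanged, and since $S \le \frac{L^2}{\alpha\lam r^2}$, all the $\log S$ factors remain dominated by $\log\Par{\frac{L^2}{\lam\phi}}$. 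Plugging the new Lemma~\ref{lem:oracle_two_impl} bounds into the chain Lemma~\ref{lem:quad_newton} $\to$ Proposition~\ref{prop:binsearch} $\to$ Proposition~\ref{prop:parallel_ball_oracle} (with phase-one cost, from Lemma~\ref{lem:oracle_one_impl}, strictly dominated by phase-two) then yields the claimed bounds.

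The main technical obstacle is bookkeeping rather than new ideas: I must verify that $S = \frac{L^2}{C\alpha\lam r^2}$ remains at least $1$ for every value of $\alpha \in [\max(\alpha_{3r}, 2\lam), \frac{4L}{r}+2\lam]$ that Proposition~\ref{prop:binsearch} queries; when $\alpha$ is at the upper end one has to either truncate $S$ to $1$ (which only improves the work further since such large-$\alpha$ calls are cheap) or observe that their contribution to the overall cost is dominated. I also need to confirm that the logarithmic factors emerging from the warm-start schedule in Algorithm~\ref{alg:unconstrained_sgd}, the geometric aggregation in Lemma~\ref{lem:agg}, the tournament in Proposition~\ref{prop:high_prob_reduction}, and the outer Newton iterations are all uniformly absorbed into $\log\Par{\frac{L^2}{\lam\phi}}$ and $\log\Par{\frac{L}{\lam r}}$ as stated, so that no polylogarithmic overhead appears in front of the $C$ or $C^{2-\omega}$ factors. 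Neither issue requires any new analysis.
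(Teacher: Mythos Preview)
Your proposal is correct and follows essentially the same approach as the paper: the paper's argument (given in the paragraph preceding the corollary rather than a formal proof) is exactly to replace $S = \frac{L^2}{\alpha\lam r^2}$ by $S = \frac{L^2}{C\alpha\lam r^2}$ in the invocation of Corollary~\ref{cor:impl_sgd} within Lemma~\ref{lem:oracle_two_impl}, and then read off the resulting $C$ and $C^{2-\omega}$ scalings in depth and work. Your additional remarks about truncating $S$ at the upper end of the $\alpha$-range and absorbing the logarithmic factors are more careful than the paper's treatment, which simply notes that $C \le \frac{L^2}{\lam^2 r^2}$ keeps the logs unchanged.
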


\subsection{Proof of Theorem~\ref{thm:main_formal}}\label{ssec:full_algo}

In this section, we prove our main result, Theorem~\ref{thm:main_formal}, by combining the ball acceleration framework in Proposition~\ref{prop:ball_accel} with our parallel ball optimization oracle implementation in Proposition~\ref{prop:parallel_ball_oracle}.

\restatemainformal*
\begin{proof}
Throughout, let $\rho \defeq \frac \eps {2L\sqrt{d}}$, and let $x^\star$ minimize $f$ over $\ball(R)$. We optimize $f_\rho$ to expected error $\frac \eps 2$, yielding the conclusion via Observation~\ref{obs:conv_suffices}. Let $K = \Theta(d^{\frac 1 3}\kappa^{\frac 2 3}\log^{\frac 1 3}(d\kappa))$, and choose
\begin{equation}\label{eq:lams_r_def}\lams = \Theta\Par{\frac{\eps \kappa^{\frac 4 3}d^{\frac 2 3}}{R^2}\log^2(d\kappa)},\; r = \Theta\Par{\frac{\rho}{\sqrt{\log\Par{d\kappa}}}},\end{equation}
to be compatible with the parameters in Proposition~\ref{prop:ball_accel}, such that $r \le \frac \rho 6 \cdot \log^{-\half}(\frac{2\cba L}{\lams \rho})$, following the notation in Proposition~\ref{prop:ball_accel}. This implies that Corollary~\ref{cor:conv_reg_stable} holds for every choice of  $\lam \ge \frac{\lams}{\cba}$ used in ball optimization oracles by Proposition~\ref{prop:ball_accel}. Therefore, assuming $\cba \ge 100$ without loss of generality, we can use Proposition~\ref{prop:parallel_ball_oracle} to implement every ball optimization oracle.

To bound the query depth, we apply Proposition~\ref{prop:parallel_ball_oracle} for each of the $O(K\log^3(d\kappa))$ ball optimization oracles required. To bound the query complexity, we have the claim from
\begin{align*}
O\Par{K\log^3(d\kappa) \cdot \frac{L^2}{\lams^2 r^2}\log^5(d\kappa)} &= O\Par{\kappa^2 \log^{\frac{16}{3}}(d\kappa)},\\
\sum_{j \in [\lceil \log_2 K + \cba \rceil]} O\Par{2^{-j} K\log\Par{d\kappa} \cdot \frac{2^j L^2}{\lams^2 r^2}\log^7\Par{d\kappa}} &= O\Par{\frac{KL^2}{\lams^2 r^2}\log^9\Par{d\kappa}} \\
&= O\Par{\kappa^2\log^{\frac{19}{3}}(d\kappa)}.
\end{align*}
We also require one query per ball optimization oracle, so there is an additive $K\log^3(d\kappa)$ term. To bound the computational depth, we perform a similar calculation using Proposition~\ref{prop:parallel_ball_oracle}:
\begin{align*}
O\Par{K\log^3(d\kappa) \cdot \log^5(d\kappa)} &= O\Par{d^{\frac 1 3}\kappa^{\frac 2 3}\log^{\frac{25}{3}}(d\kappa)},\\
\sum_{j \in [\lceil \log_2 K + \cba \rceil]} O\Par{2^{-j} K\log\Par{d\kappa} \cdot 2^j\log^7\Par{d\kappa}} &= O\Par{K\log^9(d\kappa)} = O\Par{d^{\frac 1 3}\kappa^{\frac 2 3}\log^{\frac{28}{3}}(d\kappa)}.
\end{align*}
Finally, for the computational complexity, we have (using the bound $\omega \ge 2$)
\begin{align*}
O\Par{K\log^3(d\kappa) \cdot d\log^5(d\kappa) \cdot \Par{\frac{L^2}{\lams^2 r^2}}^{\omega - 1}} &= O\Par{d^{\frac 4 3}\kappa^{\frac 2 3}\log^{\frac{34}{3} -3\omega}(d\kappa) \cdot \Par{\frac{\kappa^{\frac 4 3}}{d^{\frac 1 3}}}^{\omega - 1}} \\
&=O\Par{d^{\frac{5-\omega}{3}}\kappa^{\frac{4\omega - 2}{3}}\log^{\frac{16}{3}}(d\kappa)} ,
\end{align*}
and
\begin{align*}
\sum_{j \in [\lceil \log_2 K + \cba \rceil]} O\Par{2^{-j} K\log\Par{d\kappa} \cdot 2^j \cdot d\log^{7}(d\kappa)\Par{\frac{L^2}{\lams^2 r^2}}^{\omega - 1}} &=O\Par{d^{\frac{5-\omega}{3}}\kappa^{\frac{4\omega - 2}{3}}\log^{\frac{19}{3}}(d\kappa)}.
\end{align*}
Again, we must perform at least one step per ball optimization oracle, so there is an additive $dK\log^3(d\kappa)$ term. Combining these bounds yields the conclusion.
\end{proof}

By instead using a different parameter $C$ as in Corollary~\ref{cor:tradeoff_ball_oracle}, we obtain the following corollary, which interpolates between the two extremes of standard stochastic gradient descent and Theorem~\ref{thm:main_formal}.

\begin{corollary}\label{cor:tradeoff_main}
In the context of Theorem~\ref{thm:main_formal}, for any $C \in [1, \frac{L^2}{\lams^2 r^2}]$ where $\lams, r$ are as defined in \eqref{eq:lams_r_def}, there is an algorithm which solves Problem~\ref{prob:sco} using:
\begin{gather*}
O\Par{d^{\frac 1 3}\kappa^{\frac 2 3}\log^{\frac{13}{3}}\Par{d\kappa}\log\log\Par{d\kappa}\cdot \dQuery + Cd^{\frac 1 3}\kappa^{\frac 2 3}\log^{\frac{28}{3}}(d\kappa)}\text{ depth,}\\
\text{and } O\Par{\Par{d^{\frac 1 3}\kappa^{\frac 2 3}\log^{\frac{10}3}(d\kappa) + \kappa^2\log^{\frac{19}{3}}(d\kappa)} \cdot \tQuery}\\
 + O\Par{C^{2-\omega}\Par{d^{\frac 4 3}\kappa^{\frac 2 3}\log^{\frac {10} 3}(d\kappa)+ d^{\frac{5-\omega}{3}}\kappa^{\frac{4\omega - 2}{3}}\log^{\frac{19}{3}}(d\kappa)}} \text{ work,}
\end{gather*}
where $\omega < 2.372$ \cite{AlmanDWXXZ24} is the matrix multiplication exponent, and $\kappa \defeq \frac{LR}{\eps}$.
\end{corollary}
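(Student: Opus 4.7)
The plan is to re-run the proof of Theorem~\ref{thm:main_formal} verbatim, but substitute Corollary~\ref{cor:tradeoff_ball_oracle} for Proposition~\ref{prop:parallel_ball_oracle} at every ball-optimization-oracle invocation required by the ball acceleration framework in Proposition~\ref{prop:ball_accel}. First I would set $\rho \defeq \tfrac{\eps}{2L\sqrt d}$ and reduce to optimizing $f_\rho$ to expected accuracy $\tfrac{\eps}{2}$ via Observation~\ref{obs:conv_suffices}. Then I would fix $K, \lams, r$ exactly as in \eqref{eq:lams_r_def}. The choice of $r$ ensures $r \le \tfrac{\rho}{6}\log^{-1/2}(\tfrac{2\cba L}{\lams\rho})$, so by Corollary~\ref{cor:conv_reg_stable}, Corollary~\ref{cor:tradeoff_ball_oracle} is applicable at every regularization $\lam \ge \tfrac{\lams}{\cba}$ that Proposition~\ref{prop:ball_accel} requests.

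With the given $C \in [1, \tfrac{L^2}{\lams^2 r^2}]$, Corollary~\ref{cor:tradeoff_ball_oracle} multiplies the $\tfrac{\lam r^2}{\phi}$ computational-depth term of the ball oracle by $C$, and multiplies the $d \cdot \tfrac{\lam r^2}{\phi} \cdot (\tfrac{L^2}{\lam^2 r^2})^{\omega-1}$ computational-complexity term by $C^{2-\omega}$; crucially, neither the $\dQuery$-proportional depth term nor the $\tQuery$-proportional work term (which are purely query-side quantities) is affected. I would then perform the same two aggregations carried out in the proof of Theorem~\ref{thm:main_formal}: one over the $O(K\log^3(d\kappa))$ ``outer'' oracle calls with target error $\tfrac{\lams r^2}{\cba}$, and the dyadic sum $\sum_{j \in [\lceil \log_2 K + \cba \rceil]}$ of ``inner'' oracle calls with target error $\Theta(\tfrac{\lams r^2}{\cba 2^j}\log^{-2}(\tfrac{R\kappa}{r}))$. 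In each aggregation, the $2^{-j}$ decay in the outer count exactly cancels the $2^j$ growth in $\tfrac{1}{\phi}$, so each sum telescopes to within log factors of the top-level term, as in Theorem~\ref{thm:main_formal}.

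Substituting the values of $\lams$ and $r$ from \eqref{eq:lams_r_def} (which give $\tfrac{L^2}{\lams^2 r^2} = \Theta(\tfrac{\kappa^{4/3}}{d^{1/3}}\log^{-c}(d\kappa))$ for a small constant $c$, and $\tfrac{\lam r^2}{\phi} = O(\log(d\kappa))$ up to universal constants for the relevant ranges of $\lam, \phi$) into the resulting bounds, the computational-depth contribution picks up exactly the claimed factor $C$ in the second term, while the computational-complexity contribution picks up exactly the factor $C^{2-\omega}$ on the second group of terms; the $\dQuery$ and $\tQuery$ prefactors are identical to those in Theorem~\ref{thm:main_formal}. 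A sanity check at $C=1$ recovers Theorem~\ref{thm:main_formal}, and $C = \tfrac{L^2}{\lams^2 r^2}$ pushes the computational complexity down to the $\tQuery$-dominated regime.

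The main obstacle is purely bookkeeping: verifying that $C$ only decorates the intended ``computational'' summands (not the query-side ones), and carefully propagating the $C^{2-\omega}$ factor through the geometric sums while keeping the $\log$ exponents aligned with those in \eqref{eq:lams_r_def} and Corollary~\ref{cor:tradeoff_ball_oracle}. Since $\omega \ge 2$, the exponent $2-\omega \le 0$, so larger $C$ gives smaller work and larger depth, consistent with the interpolation interpretation. No new structural lemmas are needed, and the restriction $C \le \tfrac{L^2}{\lams^2 r^2}$ is exactly the range over which Corollary~\ref{cor:tradeoff_ball_oracle} was stated.
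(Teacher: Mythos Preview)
Your proposal is correct and takes essentially the same approach as the paper, which simply states (in the sentence preceding the corollary) that one obtains the result ``by instead using a different parameter $C$ as in Corollary~\ref{cor:tradeoff_ball_oracle}'' in place of Proposition~\ref{prop:parallel_ball_oracle} within the proof of Theorem~\ref{thm:main_formal}. Your write-up is in fact more detailed than the paper's, correctly isolating that only the computational-depth and computational-complexity summands are scaled by $C$ and $C^{2-\omega}$ respectively while the $\dQuery$- and $\tQuery$-proportional terms pass through unchanged.
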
 
\section*{Acknowledgements}

We thank Yair Carmon for helpful conversations during the initial stages of this project. Aaron Sidford was supported in part by a Microsoft Research Faculty Fellowship, NSF CAREER Award CCF-1844855, NSF Grant CCF1955039, and a PayPal research award. Part of this work was conducted while authors were visiting the Simons Institute for the Theory of Computing.

\bibliographystyle{alpha}

\begin{thebibliography}{ADW{\etalchar{+}}24}

\bibitem[ABRW12]{AgarwalBRW12}
Alekh Agarwal, Peter~L. Bartlett, Pradeep Ravikumar, and Martin~J. Wainwright.
\newblock Information-theoretic lower bounds on the oracle complexity of
  stochastic convex optimization.
\newblock {\em {IEEE} Trans. Inf. Theory}, 58(5):3235--3249, 2012.

\bibitem[ACJ{\etalchar{+}}21]{AsiCJJS21}
Hilal Asi, Yair Carmon, Arun Jambulapati, Yujia Jin, and Aaron Sidford.
\newblock Stochastic bias-reduced gradient methods.
\newblock In {\em Advances in Neural Information Processing Systems 34: Annual
  Conference on Neural Information Processing Systems 2021}, pages
  10810--10822, 2021.

\bibitem[ADW{\etalchar{+}}24]{AlmanDWXXZ24}
Josh Alman, Ran Duan, Virginia~Vassilevska Williams, Yinzhan Xu, Zixuan Xu, and
  Renfei Zhou.
\newblock More asymmetry yields faster matrix multiplication.
\newblock {\em CoRR}, abs/2404.16349, 2024.

\bibitem[BJL{\etalchar{+}}19]{BubeckJLLS19}
S{\'{e}}bastien Bubeck, Qijia Jiang, Yin~Tat Lee, Yuanzhi Li, and Aaron
  Sidford.
\newblock Complexity of highly parallel non-smooth convex optimization.
\newblock In {\em Advances in Neural Information Processing Systems 32: Annual
  Conference on Neural Information Processing Systems 2019}, pages
  13900--13909, 2019.

\bibitem[BS18]{BalkanskiS18}
Eric Balkanski and Yaron Singer.
\newblock Parallelization does not accelerate convex optimization: Adaptivity
  lower bounds for non-smooth convex minimization.
\newblock {\em arXiv: 1808.03880}, 2018.

\bibitem[BV04]{bertsimas2004solving}
Dimitris Bertsimas and Santosh Vempala.
\newblock Solving convex programs by random walks.
\newblock {\em Journal of the ACM (JACM)}, 51(4):540--556, 2004.

\bibitem[CGJS23]{ChakrabartyGJS23}
Deeparnab Chakrabarty, Andrei Graur, Haotian Jiang, and Aaron Sidford.
\newblock Parallel submodular function minimization.
\newblock {\em CoRR}, abs/2309.04643, 2023.

\bibitem[CH24]{CarmonH24}
Yair Carmon and Oliver Hinder.
\newblock The price of adaptivity in stochastic convex optimization.
\newblock {\em CoRR}, abs/2402.10898, 2024.

\bibitem[CJJ{\etalchar{+}}20]{CarmonJJJLST20}
Yair Carmon, Arun Jambulapati, Qijia Jiang, Yujia Jin, Yin~Tat Lee, Aaron
  Sidford, and Kevin Tian.
\newblock Acceleration with a ball optimization oracle.
\newblock In {\em Advances in Neural Information Processing Systems 33: Annual
  Conference on Neural Information Processing Systems 2020}, 2020.

\bibitem[CJJ{\etalchar{+}}23]{CarmonJJLLST23}
Yair Carmon, Arun Jambulapati, Yujia Jin, Yin~Tat Lee, Daogao Liu, Aaron
  Sidford, and Kevin Tian.
\newblock Resqueing parallel and private stochastic convex optimization.
\newblock {\em CoRR}, abs/2301.00457, 2023.

\bibitem[DBW12]{DuchiBM12}
John~C Duchi, Peter~L Bartlett, and Martin~J Wainwright.
\newblock Randomized smoothing for stochastic optimization.
\newblock {\em SIAM Journal on Optimization}, 22(2):674--701, 2012.

\bibitem[DG19]{DG19}
Jelena Diakonikolas and Crist{\'{o}}bal Guzm{\'{a}}n.
\newblock Lower bounds for parallel and randomized convex optimization.
\newblock In {\em Conference on Learning Theory, {COLT}}, 2019.

\bibitem[{\relax DLMF}]{DLMF}
{\it NIST Digital Library of Mathematical Functions}.
\newblock http://dlmf.nist.gov/, Release 1.1.8 of 2022-12-15.
\newblock F.~W.~J. Olver, A.~B. {Olde Daalhuis}, D.~W. Lozier, B.~I. Schneider,
  R.~F. Boisvert, C.~W. Clark, B.~R. Miller, B.~V. Saunders, H.~S. Cohl, and
  M.~A. McClain, eds.

\bibitem[GKNS21]{GargKNS21}
Ankit Garg, Robin Kothari, Praneeth Netrapalli, and Suhail Sherif.
\newblock No quantum speedup over gradient descent for non-smooth convex
  optimization.
\newblock In {\em 12th Innovations in Theoretical Computer Science Conference,
  {ITCS} 2021, January 6-8, 2021, Virtual Conference}, volume 185 of {\em
  LIPIcs}, pages 53:1--53:20. Schloss Dagstuhl - Leibniz-Zentrum f{\"{u}}r
  Informatik, 2021.

\bibitem[JLSW20]{jiang2020improved}
Haotian Jiang, Yin~Tat Lee, Zhao Song, and Sam Chiu-wai Wong.
\newblock An improved cutting plane method for convex optimization,
  convex-concave games, and its applications.
\newblock In {\em Proceedings of the 52nd Annual ACM SIGACT Symposium on Theory
  of Computing}, pages 944--953, 2020.

\bibitem[JRT23]{JambulapatiRT23}
Arun Jambulapati, Victor Reis, and Kevin Tian.
\newblock Linear-sized sparsifiers via near-linear time discrepancy theory.
\newblock {\em CoRR}, abs/2305.08434, 2023.

\bibitem[Kha80]{khachiyan1980polynomial}
Leonid~G Khachiyan.
\newblock Polynomial algorithms in linear programming.
\newblock {\em USSR Computational Mathematics and Mathematical Physics},
  20(1):53--72, 1980.

\bibitem[KLL{\etalchar{+}}23]{KelnerLLST23}
Jonathan~A. Kelner, Jerry Li, Allen~X. Liu, Aaron Sidford, and Kevin Tian.
\newblock Semi-random sparse recovery in nearly-linear time.
\newblock In {\em The Thirty Sixth Annual Conference on Learning Theory},
  volume 195 of {\em Proceedings of Machine Learning Research}, pages
  2352--2398. {PMLR}, 2023.

\bibitem[KTE88a]{KTE88}
Leonid~G. Khachiyan, Sergei~Pavlovich Tarasov, and I.~I. Erlikh.
\newblock The method of inscribed ellipsoids.
\newblock {\em Soviet Math. Dokl.}, 37:226--230, 1988.

\bibitem[KTE88b]{khachiyan1988method}
Leonid~G Khachiyan, Sergei~Pavlovich Tarasov, and II~Erlikh.
\newblock The method of inscribed ellipsoids.
\newblock In {\em Soviet Math. Dokl}, volume~37, pages 226--230, 1988.

\bibitem[Lev65]{levin1965algorithm}
Anatoly~Yur'evich Levin.
\newblock An algorithm for minimizing convex functions.
\newblock In {\em Doklady Akademii Nauk}, volume 160, pages 1244--1247. Russian
  Academy of Sciences, 1965.

\bibitem[LSB12]{LacosteJSB12}
Simon Lacoste{-}Julien, Mark Schmidt, and Francis~R. Bach.
\newblock A simpler approach to obtaining an o(1/t) convergence rate for the
  projected stochastic subgradient method.
\newblock {\em CoRR}, abs/1212.2002, 2012.

\bibitem[LSW15]{lee2015faster}
Yin~Tat Lee, Aaron Sidford, and Sam Chiu-wai Wong.
\newblock A faster cutting plane method and its implications for combinatorial
  and convex optimization.
\newblock In {\em 2015 IEEE 56th Annual Symposium on Foundations of Computer
  Science}, pages 1049--1065. IEEE, 2015.

\bibitem[Nem94]{Nem94}
Arkadi Nemirovski.
\newblock On parallel complexity of nonsmooth convex optimization.
\newblock {\em Journal of Complexity}, 10(4):451--463, 1994.

\bibitem[Nes89]{nesterov1989self}
Ju~E Nesterov.
\newblock Self-concordant functions and polynomial-time methods in convex
  programming.
\newblock {\em Report, Central Economic and Mathematic Institute, USSR Acad.
  Sci}, 1989.

\bibitem[New65]{newman1965location}
Donald~J Newman.
\newblock Location of the maximum on unimodal surfaces.
\newblock {\em Journal of the ACM (JACM)}, 12(3):395--398, 1965.

\bibitem[NY83]{NemirovskiY83}
A.\ Nemirovski and D.\~B.\ Yudin.
\newblock {\em Problem Complexity and Method Efficiency in Optimization}.
\newblock Wiley, 1983.

\bibitem[Pan87]{Pan87}
Victor~Y. Pan.
\newblock Complexity of parallel matrix computations.
\newblock {\em Theor. Comput. Sci.}, 54:65--85, 1987.

\bibitem[PR85]{PanR85}
Victor~Y. Pan and John~H. Reif.
\newblock Efficient parallel solution of linear systems.
\newblock In {\em Proceedings of the 17th Annual {ACM} Symposium on Theory of
  Computing}, pages 143--152. {ACM}, 1985.

\bibitem[Sho77]{shor1977cut}
Naum~Z Shor.
\newblock Cut-off method with space extension in convex programming problems.
\newblock {\em Cybernetics}, 13(1):94--96, 1977.

\bibitem[SZ23]{SidfordZ23}
Aaron Sidford and Chenyi Zhang.
\newblock Quantum speedups for stochastic optimization.
\newblock {\em CoRR}, abs/2308.01582, 2023.

\bibitem[Tro15]{Tropp15}
Joel~A. Tropp.
\newblock An introduction to matrix concentration inequalities.
\newblock {\em Found. Trends Mach. Learn.}, 8(1-2):1--230, 2015.

\bibitem[Vai96]{Vaidya96}
Pravin~M. Vaidya.
\newblock A new algorithm for minimizing convex functions over convex sets.
\newblock {\em Math. Program.}, 73:291--341, 1996.

\bibitem[WXXZ23]{WilliamsXXZ23}
Virginia~Vassilevska Williams, Yinzhan Xu, Zixuan Xu, and Renfei Zhou.
\newblock New bounds for matrix multiplication: from alpha to omega.
\newblock {\em CoRR}, abs/2307.07970, 2023.

\bibitem[YN76]{yudin1976informational}
David~B Yudin and Arkadi~S Nemirovskii.
\newblock Informational complexity and efficient methods for the solution of
  convex extremal problems.
\newblock {\em Matekon}, 13(2):22--45, 1976.

\end{thebibliography}
\newcommand{\etalchar}[1]{$^{#1}$}

\newpage

\begin{appendix}
\end{appendix}

\end{document}